\newtheorem{theorem}{Theorem}[section]
\newtheorem*{theorem*}{Theorem}
\newtheorem{lemma}[theorem]{Lemma}
\newtheorem{corollary}[theorem]{Corollary}
\newtheorem*{comment*}{Comment}
\newtheorem*{definition*}{Definition}
\newtheorem*{remark*}{Remark}
\newtheorem*{observation*}{Observation}
\newtheorem*{assumption*}{Assumption}
\theoremstyle{definition}
\newtheorem{definition}{Definition}[section]
\newtheorem{question}{Question}
\theoremstyle{remark}
\newtheorem{remark}{Remark}[section]
\newcommand{\Var}{\mathrm{Var}}
\newcommand{\supp}{\mathrm{supp}}
\newcommand{\Conf}{\mathrm{Conf}}
\begin{document}

\title[Fluctuations of the process of moduli]{Fluctuations of the process of moduli for the Ginibre and hyperbolic ensembles}

\author
{Alexander I. Bufetov}
\address{Alexander I. Bufetov: CNRS, Aix-Marseille Universit{\'e}, Centrale Marseille, Institut de Math{\'e}matiques de Marseille, UMR7373, 39 Rue F. Joliot Curie 13453, Marseille, France; 
\newline Steklov Mathematical Institute of RAS, Moscow, Russia; 
\newline Institute for Information Transmission Problems, Moscow, Russia.}
\email{alexander.bufetov@univ-amu.fr; bufetov@mi-ras.ru}

\author
{David Garc\'ia-Zelada}
\address
{David Garc\'ia-Zelada: Laboratoire de Probabilit{\'e}s, Statistique et Mod{\'e}lisation, UMR CNRS 8001, Sorbonne Universit{\'e}, 4 Place Jussieu, 75005 Paris, France}
\email{david.garcia-zelada@sorbonne-universite.fr}

\author
{Zhaofeng Lin}
\address{Zhaofeng Lin: Shanghai Center for Mathematical Sciences, Fudan University, Shanghai, 200438, China.}
\email{zflin18@fudan.edu.cn}

\thanks{A. I. Bufetov's research has received funding from the European Research Council (ERC) under the European Union's Horizon 2020 research and innovation programme, grant agreement No 647133 (ICHAOS), as well as from the ANR grant ANR-18-CE40-0035 REPKA. Z. Lin is supported by grants NSFC 11722102 and NSFC 12026250.}

\begin{abstract}
	We investigate the point process of moduli of the Ginibre and hyperbolic ensembles. We show that far from the origin and at an appropriate scale, these processes exhibit Gaussian and Poisson fluctuations. Among the possible Gaussian fluctuations, we can find white noise but also fluctuations with non-trivial covariance at a particular scale.
\end{abstract}

\subjclass[2010]{Primary 60G55; Secondary 30B20, 30H20.}

\keywords{Ginibre ensemble; hyperbolic ensemble; process of moduli; normality; white noise; Poisson point process.}

\maketitle

\section{Formulation of the main results}

The main results of this paper, Theorems~\ref{Extreme Hyperbolic Normality}-\ref{Ginibre Poisson}, establish limit theorems for additive statistics of the Ginibre ensemble and the hyperbolic ensembles, introduced by Krishnapur, including the determinantal point process with the Bergman kernel, which, by the Peres-Vir\'ag theorem, is the zero set of the Gaussian analytic function on the unit disc.

In this section, we begin by recalling the notion of determinantal point process, which are point processes where the correlation functions take the form of a determinant. Afterwards, the specific examples we are interested in, namely the Ginibre point process and the hyperbolic ensembles, are discussed. Finally, we state the main results of this note, Theorems~\ref{Extreme Hyperbolic Normality}-\ref{Ginibre Poisson}.

\subsection{Determinantal point process}
Let $X$ be a locally compact Polish space and $\mathcal{B}_0(X)$ the collection of all pre-compact Borel subsets of $X$. We shall denote by $\Conf(X)$, the space of all locally finite configurations over $X$, that is,
\begin{equation*}
	\begin{split}
		\Conf(X):=\big\{\xi=\textstyle\sum_{i}\delta_{x_i}\,\big|\,\,\forall i,\,x_i\in X\, \text{and $\xi(\Delta)<\infty$ for all $\Delta\in\mathcal{B}_0(X)$}\big\}.
	\end{split}
\end{equation*}
We shall consider this set endowed with the vague topology, i.e., the weakest topology on $\Conf(X)$ such that for any compactly supported continuous function $f$ on $X$, the map $\Conf(X)\ni\xi\mapsto\int_{X}f\mathrm d\xi$ is continuous. It can be seen that the configuration space $\Conf(X)$ equipped with the vague topology turns out to be a Polish space. Additionally, it can be seen that the Borel $\sigma$-algebra $\mathcal{F}$ on $\Conf(X)$ is generated by the cylinder sets $C_n^\Delta=\big\{\xi\in\Conf(X)\,|\,\,\xi(\Delta)=n\big\}$, where $n\in\mathbb{N}=\{0,1,2,\cdots\}$ and $\Delta\in\mathcal{B}_0(X)$.  Finally, we will say that a measurable map
\begin{equation*}
	\begin{split}
		\mathscr{X}:(\Omega,\mathcal{F}(\Omega),\mathbb{P})\to(\Conf(X),\mathcal{F})
	\end{split}
\end{equation*}
is a point process on $X$, where $(\Omega,\mathcal{F}(\Omega),\mathbb{P})$ is any probability space. For futher background, see \cite{DV, KK, Le}.

A point process $\mathscr{X}$ is called simple if it almost surely assigns at most measure one to singletons. In the simple case, $\mathscr{X}$ can be identified with a random discrete subset of $X$ and for any Borel set $\Delta$ on $X$, the number $\mathscr{X}(\Delta) \in \mathbb N \cup \{\infty\}$ represents the number of points of this discrete subset that fall in $\Delta$.
So, for instance, we will use the notation 
$\{T(x): x \in \mathscr{X}\}$ instead of
the usual pushforward notation $T_*\mathscr X$
\hspace{-2mm}, 
for simplicity.

Determinantal point processes have been introduced by Odile Macchi \cite{Ma} in the seventies. We recall the definition. Let $\mu$ be a Radon measure on $X$ and let $K:X\times X\to\mathbb{C}$ be a measurable function. A simple point process $\mathscr{X}$ is called determinantal on $X$ associated to the kernel $K$ with respect to the reference measure $\mu$ if, for every $k\in\mathbb{N}_+$ and any family of mutually disjoint subsets $\Delta_1,\Delta_2,\cdots,\Delta_k\in\mathcal{B}_0(X)$,
\begin{equation}\label{DPP-definition}
	\begin{split}
		\mathbb{E}\Big[\prod_{i=1}^{k}\mathscr{X}(\Delta_i)\Big]=\int_{\Delta_1\times\cdots\times\Delta_k} \det\big[K(x_i,x_j)\big]_{1\leq i,j\leq k}\mathrm{d}\mu(x_1)\cdots \mathrm{d}\mu(x_k).
	\end{split}
\end{equation}
See, e.g., \cite{Bo, Bu, BD, BQ, HKPV, PV, ST, So, Sos} for further background of determinantal point process.

The moments of the linear statistics $\sum_{x\in\mathscr{X}}f(x):=\int_{X}f\mathrm{d}\mathscr{X}$ under a determinantal point process can be calculated from \eqref{DPP-definition}. For instance, when the kernel $K$ is Hermitian and satisfies the reproducing property, i.e., it represents an orthogonal projection, then
\begin{equation}\label{Expected value}
	\begin{split}
		\mathbb{E}\Big[\sum_{x\in\mathscr{X}}f(x)\Big]&=\int_{X}f(x)K(x,x)\mathrm{d}\mu(x),
	\end{split}
\end{equation}
and
\begin{equation}\label{Variance}
	\begin{split}
		\Var\Big(\sum_{x\in\mathscr{X}}f(x)\Big)=\frac{1}{2}\int_{X^2}\big[f(x)-f(y)\big]^2\big|K(x,y)\big|^2\mathrm{d}\mu(x)\mathrm{d}\mu(y).
	\end{split}
\end{equation}
See, e.g., \cite[Proposition 4.1]{Gh}, \cite[Lemma 8.5]{GP} and \cite[formulas (4) and (5)]{Sos}.

\subsection{The Ginibre ensemble}
Consider the Gaussian measure $\mu$ on the whole plane $\mathbb{C}$ given by
\begin{equation*}
	\begin{split}
		\mathrm{d}\mu(z)=\frac{1}{\pi}e^{-|z|^2}\mathrm{d}m(z),
	\end{split}
\end{equation*}
where $\mathrm{d}m$ is the usual Lebesgue measure. We also need to consider the space  $\mathcal{O}(\Lambda)$ of holomorphic functions on an open set $\Lambda\subset\mathbb{C}$.

In the finite-dimensional setting, the Ginibre ensemble was introduced by Ginibre \cite{Gi} as a model based on the eigenvalues of non-Hermitian random matrices, and the infinite Ginibre ensemble is obtained as a weak limit of these finite-dimensional point processes. The infinite Ginibre ensemble can be defined as follows.

\begin{definition}[Ginibre ensemble]
	The \emph{Ginibre ensemble} $\mathcal{G}$ is the determinantal point process associated to the Fock kernel, i.e., the kernel of the orthogonal projection of $L^2(\mathbb{C},\mu)$ onto $L^2(\mathbb{C},\mu)\cap\mathcal{O}(\mathbb{C})$, with respect to the reference measure $\mu$.
\end{definition}

Equivalently, the Ginibre ensemble is the point process $\mathcal{G}$ on $\mathbb{C}$ such that for any pairwise disjoint measurable subsets
$\Delta_1, \Delta_2, \dots, \Delta_k$ of $\mathbb{C}$, we have that
\begin{equation*}
	\begin{split}
		\mathbb{E}\Big[\prod_{i=1}^{k}\mathcal{G}(\Delta_i)\Big]=\int_{\Delta_1\times\cdots\times\Delta_k}\det\big[K_{\mathcal{G}}(z_i,z_j)\big]_{1\leq i,j\leq k}\mathrm{d}m(z_1)\cdots\mathrm{d}m(z_k),
	\end{split}
\end{equation*}
where $K_{\mathcal{G}}: \mathbb{C}\times\mathbb{C}\to\mathbb{C}$ is given by
\begin{equation*}
	\begin{split}
		K_{\mathcal{G}}(z,w)=\frac{1}{\pi}\sum_{n=0}^\infty\frac{z^n\overline{w}^n}{n!}e^{-\frac{|z|^2}{2}}e^{-\frac{|w|^2}{2}}=\frac{1}{\pi}e^{z\overline{w}-\frac{|z|^2}{2}-\frac{|w|^2}{2}}.
	\end{split}
\end{equation*}

\subsection{The hyperbolic ensembles}
For each $\alpha>0$, consider the probability measure $\mu_{\alpha}$ on the unit disc $\mathbb{D}$ given by
\begin{equation*}
	\begin{split}
		\mathrm{d}\mu_{\alpha}(z)=\frac{\alpha}{\pi}(1-|z|^2)^{\alpha-1}\mathrm{d}m(z)
	\end{split}
\end{equation*}
and recall that $\mathcal O(\mathbb D)$ denotes
the space of holomorphic functions on $\mathbb D$.
\begin{definition}[Hyperbolic ensemble]
	For $\alpha>0$, the $\alpha$-\emph{hyperbolic ensemble} $\mathcal{H}_{\alpha}$ is the determinantal point process 
	associated to the $\mu_{\alpha}$-weighted Bergman kernel, i.e.,
	the kernel of the orthogonal projection of $L^2(\mathbb{D},\mu_{\alpha})$ onto $L^2(\mathbb{D},\mu_{\alpha})\cap\mathcal{O}(\mathbb{D})$, 
	with respect to the reference measure $\mu_\alpha$.
\end{definition}

Equivalently, the $\alpha$-hyperbolic ensemble is the point process $\mathcal{H}_{\alpha}$ on $\mathbb{D}$ such that for any pairwise disjoint measurable subsets
$\Delta_1, \Delta_2, \dots, \Delta_k$ of $\mathbb{D}$, we have that
\begin{equation*}
	\begin{split}
		\mathbb{E}\Big[\prod_{i=1}^{k}\mathcal{H_{\alpha}}(\Delta_i)\Big]=\int_{\Delta_1\times\cdots\times\Delta_k}\det\big[K_{\mathcal{H}_{\alpha}}(z_i,z_j)\big]_{1\leq i,j\leq k}\mathrm{d}m(z_1)\cdots\mathrm{d}m(z_k),
	\end{split}
\end{equation*}
where $K_{\mathcal{H}_{\alpha}}: \mathbb{D}\times\mathbb{D}\to\mathbb{C}$ is given by
\begin{equation*}
	\begin{split}
		K_{\mathcal{H}_{\alpha}}(z,w)=\frac{1}{\pi}\sum_{n=0}^\infty k_n^{(\alpha)}z^n\overline{w}^n(1-|z|^2)^{\alpha-1}(1-|w|^2)^{\alpha-1}=\frac{\alpha}{\pi}\frac{(1-|z|^2)^{\alpha-1}(1-|w|^2)^{\alpha-1}}{(1-z\overline{w})^{\alpha+1}},
	\end{split}
\end{equation*}
while
\begin{equation*}
	\begin{split}
		k_n^{(\alpha)}=\frac{\alpha(\alpha+1)\cdots(\alpha+n)}{n!}=\frac{\Gamma(\alpha+n+1)}{\Gamma(\alpha)\Gamma(n+1)}.
	\end{split}
\end{equation*}

For $\alpha=1$, Peres and Vir\'ag \cite{PV} showed that the zeros of the random analytic function
\begin{equation*}
	\begin{split}
		\phi(z)=a_0+a_1z+a_2z^2+\cdots,
	\end{split}
\end{equation*}
where $a_k$, $k\geq0$, are independent and identically distributed standard complex Gaussian random variables, follow the law of the $1$-hyperbolic ensemble $\mathcal{H}_1$. Krishnapur \cite{Kr} extended the result of Peres and Vir\'ag to positive integer $\alpha=m$, showing that, if $G_k$, $k\geq0$, are independent and identically distributed $m\times m$ matrices, each with independent and identically distributed standard complex Gaussian entries, then the zeros of the random analytic function
\begin{equation*}
	\begin{split}
		\Phi(z)=\det(G_0+G_1z+G_2z^2+\cdots)
	\end{split}
\end{equation*}
follow the law of the $m$-hyperbolic ensemble 
$\mathcal H_m$.

\subsection{Main results}
Recall that the unit disc $\mathbb{D}$ endowed with the metric $\mathrm{d}\mu_{\alpha}$ is the Poincar\'e model for the Lobachevsky plane. Our results involve the point process formed by $\{|z|: z\in\mathcal{G}\}$ and the one formed by $\{|z|_{\mathrm{h}}: z\in\mathcal{H}_{\alpha}\}$, $\alpha>0$, where
\begin{equation*}
	\begin{split}
		|z|_{\mathrm{h}} = \log{\frac{1+|z|}{1-|z|}}
	\end{split}
\end{equation*}
is the hyperbolic distance from $z$ to the origin.

By \cite[Theorem~4.7.1]{HKPV}, which was first noticed
by Kostlan \cite[Lemma~1.4]{Ko} in the case of a finite number of particles, the point process $\{|z|: z\in\mathcal{G}\}$ follows the law of $\{\rho_n: n\in\mathbb{N}\}$, where
$(\rho_n)_{n\geq0}$ is a family of non-negative independent
random variables such that
\begin{equation*}
	\begin{split}
		\rho_n\sim\frac{2r^{2n+1}e^{-r^2}}{n!}\mathrm{d}r.
	\end{split}
\end{equation*}
Similarly, from \cite[Theorem~4.7.1]{HKPV}, see also \cite{Kr} and \cite{PV}, for $\alpha>0$, the point process $\{|z|: z\in\mathcal{H}_{\alpha}\}$ follows the law of $\{\rho_n^{(\alpha)}: n\in\mathbb{N}\}$, where $(\rho_n^{(\alpha)})_{n\geq0}$ is a family of independent random variables taking values in $[0,1]$ such that
\begin{equation*}
	\begin{split}
		\rho_n^{(\alpha)}\sim2\frac{\Gamma(\alpha+n+1)}{\Gamma(\alpha)\Gamma(n+1)}r^{2n+1}(1-r^2)^{\alpha-1}\mathrm{d}r.
	\end{split}
\end{equation*}

The asymptotic behavious of $\{|z| - R: z\in\mathcal{G}\}$ and of $\{|z|_{\mathrm{h}} - R: z\in\mathcal{H}_{\alpha}\}$, $\alpha>0$, as $R$ goes to infinity and under different scalings is described
in the theorems below. Theorem \ref{Extreme Hyperbolic Normality}
and \ref{Extreme Ginibre Normality} deal with a convergence
of the normalized process
towards a Gaussian field whose covariance kernel
is not the one of the white noise. Theorem \ref{White Noise Hyperbolic} and Theorem \ref{White Noise Ginibre}
deal with the intermediate
case of a convergence towards the white noise.
Finally, the last two theorems, Theorem  \ref{Hyperbolic Poisson}
and Theorem  \ref{Ginibre Poisson},
deal with the extreme case of the
convergence towards a homogeneous Poisson process.

\subsubsection{Convergence towards a non-trivial Gaussian limit}
\begin{theorem}\label{Extreme Hyperbolic Normality}
	Denote $C_R^{(\alpha)}=\alpha e^R/8$ for each $\alpha>0$. Then for any bounded measurable and compactly supported function $f: \mathbb{R}\to\mathbb{R}$,
	\begin{equation*}
		\begin{split}
			\frac{1}{\sqrt{C_R^{(\alpha)}}}\sum_{z\in\mathcal{H}_{\alpha}}f\big(|z|_{\mathrm{h}}-R\big)-2\sqrt{C_R^{(\alpha)}}\int_{\mathbb{R}}f(x)e^{x}\mathrm{d}x\xrightarrow[R\to+\infty]{\mathrm{law}}\mathcal{N}\big(0,V_f^{(\alpha)}\big),
		\end{split}
	\end{equation*}
	where
	\begin{equation*}
		\begin{split}
			V_f^{(\alpha)}=\frac{1}{B(\alpha,\alpha+1)}\int_{\mathbb{R}^2}\big[f(x)-f(y)\big]^2\frac{e^{(\alpha+1)(x+y)}}{\left(e^{x}+e^{y}\right)^{2\alpha+1}}\mathrm{d}x\mathrm{d}y.
		\end{split}
	\end{equation*}
\end{theorem}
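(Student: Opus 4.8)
The plan is to reduce to a sum of \emph{independent} random variables via the Kostlan-type description recalled above, and then to apply the Lindeberg--Feller central limit theorem. Write $S_n=\log\frac{1+\rho_n^{(\alpha)}}{1-\rho_n^{(\alpha)}}$ for the hyperbolic distances, so that $\sum_{z\in\mathcal H_\alpha}f(|z|_{\mathrm h}-R)$ has the same law as $\Sigma_R:=\sum_{n\ge0}f(S_n-R)$, a sum of independent terms. A change of variables ($\rho=\tanh(s/2)$) turns the density of $\rho_n^{(\alpha)}$ into the density $p_n(s)=\tfrac12 c_n\tanh^{2n+1}(s/2)\operatorname{sech}^{2\alpha}(s/2)$ of $S_n$, with $c_n=2\Gamma(\alpha+n+1)/(\Gamma(\alpha)\Gamma(n+1))$.

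First I would compute the intensity exactly. Summing the resulting binomial series gives the closed form $\sum_{n\ge0}p_n(s)=\frac{\alpha}{2}\sinh(s)$. Consequently $\mathbb E[\Sigma_R]=\frac{\alpha}{2}\int_{\mathbb R}f(x)\sinh(R+x)\,\mathrm dx$, and writing $\sinh(R+x)=\tfrac12 e^{R}e^{x}-\tfrac12 e^{-R}e^{-x}$ shows $\mathbb E[\Sigma_R]=2C_R^{(\alpha)}\int f(x)e^{x}\,\mathrm dx+O(e^{-R})$. Since $\sqrt{C_R^{(\alpha)}}\asymp e^{R/2}$, the deterministic centering in the statement agrees with $\mathbb E[\Sigma_R]$ up to an $o(\sqrt{C_R^{(\alpha)}})$ error, which will later be absorbed by Slutsky's lemma.

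The heart of the argument is the variance. By independence $\Var(\Sigma_R)=\sum_n\Var_{p_n}(f(\cdot-R))$. Setting $s=R+x$ and $\lambda=4ne^{-R}$, a local analysis (using $\tanh^{2n+1}(s/2)=\exp(-4ne^{-s})(1+o(1))$, $\operatorname{sech}^{2\alpha}(s/2)\sim 2^{2\alpha}e^{-\alpha s}$, and $c_n\sim 2n^\alpha/\Gamma(\alpha)$, all uniform for $n$ in the relevant window) shows $p_n(R+x)\to q_\lambda(x):=\frac{\lambda^\alpha}{\Gamma(\alpha)}e^{-\alpha x}e^{-\lambda e^{-x}}$, a probability density in $x$. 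Because the grid $\lambda=4ne^{-R}$ has spacing $4e^{-R}$, the sum becomes a Riemann sum and $\Var(\Sigma_R)=\frac{e^R}{4}\int_0^\infty \Var_{q_\lambda}(f)\,\mathrm d\lambda\,(1+o(1))$. Now use $\Var_{q_\lambda}(f)=\frac12\iint[f(x)-f(y)]^2q_\lambda(x)q_\lambda(y)\,\mathrm dx\,\mathrm dy$ together with the Gamma integral $\int_0^\infty q_\lambda(x)q_\lambda(y)\,\mathrm d\lambda=\frac{\Gamma(2\alpha+1)}{\Gamma(\alpha)^2}\frac{e^{(\alpha+1)(x+y)}}{(e^x+e^y)^{2\alpha+1}}$; since $\frac{e^R}{4}=\frac{2C_R^{(\alpha)}}{\alpha}$ and $\frac{1}{\alpha}\frac{\Gamma(2\alpha+1)}{\Gamma(\alpha)^2}=\frac{1}{B(\alpha,\alpha+1)}$, this yields $\frac{1}{C_R^{(\alpha)}}\Var(\Sigma_R)\to V_f^{(\alpha)}$. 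Note that the $[f(x)-f(y)]^2$ structure of $V_f^{(\alpha)}$ arises simply from the single-variable variance identity for each $q_\lambda$, integrated against $\lambda$.

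Finally, Gaussianity is soft: the triangular array $X_{n,R}=f(S_n-R)/\sqrt{C_R^{(\alpha)}}$ is uniformly bounded by $\|f\|_\infty/\sqrt{C_R^{(\alpha)}}\to0$, so the Lindeberg condition holds automatically, and the Lindeberg--Feller theorem with the variance convergence gives $\frac{1}{\sqrt{C_R^{(\alpha)}}}(\Sigma_R-\mathbb E\Sigma_R)\to\mathcal N(0,V_f^{(\alpha)})$; combined with the mean estimate and Slutsky's lemma this is the claim. The main obstacle is the variance step: one must make the local limit $p_n(R+x)\to q_\lambda(x)$ uniform over the compact support of $f$ and over the relevant range of $n$, and control the tails in $\lambda$ --- both $\lambda\to0$, where $\Var_{q_\lambda}(f)=O(\lambda^\alpha)$, and $\lambda\to\infty$, where $q_\lambda$ concentrates near $x=\log(\lambda/\alpha)$ and is exponentially small on $\operatorname{supp}f$ --- so that the Riemann-sum error is genuinely $o(C_R^{(\alpha)})$. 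The fact that $f$ is only bounded and measurable is harmless, since every step uses only integration against $f$ or $f^2$ and dominated convergence.
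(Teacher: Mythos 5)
Your proposal is correct, and it reaches the theorem by a route that is partly different from the paper's. The common core is the Kostlan reduction to independent radii and the variance analysis: your local limit $p_n(R+x)\to q_\lambda(x)$ along the grid $\lambda=4ne^{-R}$, followed by a Riemann sum in $n$, is exactly the paper's substitution $n=\lfloor te^{R}\rfloor$ in Subsection~\ref{Variance calculation for Extreme Hyperbolic Normality}, and the uniform bounds you flag as the main obstacle are precisely what the paper's Lemma~\ref{lem:BoundExp} and Lemma~\ref{lem:Stirling} supply; your Gamma integral $\int_0^\infty q_\lambda(x)q_\lambda(y)\,\mathrm{d}\lambda=\frac{\Gamma(2\alpha+1)}{\Gamma(\alpha)^2}\frac{e^{(\alpha+1)(x+y)}}{(e^x+e^y)^{2\alpha+1}}$ then recovers the paper's constant, since $\Gamma(2\alpha+1)/(\alpha\Gamma(\alpha)^2)=1/B(\alpha,\alpha+1)$. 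You genuinely depart from the paper in two places. First, the expectation: your closed form $\sum_n p_n(s)=\frac{\alpha}{2}\sinh s$ (valid, by $\sum_n k_n^{(\alpha)}x^n=\alpha(1-x)^{-(\alpha+1)}$) gives $\mathbb{E}\Sigma_R=2C_R^{(\alpha)}\int_{\mathbb{R}} f(x)e^x\,\mathrm{d}x+O(e^{-R})$ exactly, whereas the paper runs the Riemann-sum machinery a first time just to obtain this with an $O(1)$ error; your computation is cleaner and even sharper. Second, the normality: you use the Lindeberg--Feller theorem for a triangular array of independent summands (and indeed the Lindeberg condition is vacuous for large $R$, each centered summand being bounded by $2\|f\|_\infty/\sqrt{C_R^{(\alpha)}}$), while the paper invokes Soshnikov's CLT for determinantal processes, Theorem~\ref{Soshnikov}. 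Your choice is more elementary and self-contained, exploiting the radial independence to the fullest; the paper's choice buys uniformity, as the same black box disposes of all its Gaussian limit theorems and would also apply to statistics that are not functions of the moduli, where independence is unavailable. For a complete write-up you would only need to carry out the points you already identify --- uniformity of the local limit over $\mathrm{supp}f$ and over $n$, and the $\lambda\to0$, $\lambda\to\infty$ tails (your stated bounds $O(\lambda^\alpha)$ and exponential smallness are the right ones) --- plus the routine remark that Lindeberg--Feller applies to rows with countably many terms, harmless here since almost surely only finitely many $S_n$ fall in $R+\mathrm{supp}f$.
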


\begin{theorem}\label{Extreme Ginibre Normality}
	For any bounded measurable and compactly supported function $f: \mathbb{R}\to\mathbb{R}$,
	\begin{equation*}
		\begin{split}
			\frac{1}{\sqrt{R}}\sum_{z\in\mathcal{G}}f\big(|z|-R\big)-2\sqrt{R}\int_{\mathbb{R}}f(x)\mathrm{d}x\xrightarrow[R\to+\infty]{\mathrm{law}}\mathcal{N}(0, V_f),
		\end{split}
	\end{equation*}
	where
	\begin{equation*}
		\begin{split}
			V_f=\frac{1}{\sqrt{\pi}}\int_{\mathbb{R}^2}\big[f(x)-f(y)\big]^2e^{-(x-y)^2}\mathrm{d}x\mathrm{d}y.
		\end{split}
	\end{equation*}
\end{theorem}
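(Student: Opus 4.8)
The plan is to exploit the radial integrability of the Ginibre ensemble. By the Kostlan-type description recalled above, the process $\{|z| : z \in \mathcal{G}\}$ has the law of $\{\rho_n : n \in \mathbb{N}\}$ with $(\rho_n)_{n\ge 0}$ \emph{independent}. Hence, writing $S_R := \sum_{z\in\mathcal{G}} f(|z|-R) = \sum_{n=0}^\infty f(\rho_n - R)$, the random variable $S_R$ is a sum of independent, uniformly bounded (by $M := \sup_x|f(x)|<\infty$) summands, and the whole statement reduces to a central limit theorem for a triangular array of independent terms, together with the identification of the centering and of the limiting variance. First I would record the decomposition
\[ \tfrac{1}{\sqrt{R}}S_R - 2\sqrt{R}\int_{\mathbb{R}} f = \tfrac{1}{\sqrt{R}}\big(S_R - \mathbb{E}[S_R]\big) + \Big(\tfrac{1}{\sqrt{R}}\mathbb{E}[S_R] - 2\sqrt{R}\int_{\mathbb{R}} f\Big), \]
so it suffices to (i) show the deterministic remainder tends to $0$, (ii) prove $\Var(S_R)/R \to V_f$, and (iii) verify a Lindeberg condition.

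For the centering (i), I would use the one-point function: by \eqref{Expected value} with $K_{\mathcal{G}}(z,z)=1/\pi$, the radial intensity of $\mathcal{G}$ is $2r\,\dd r$ (equivalently $\sum_n \frac{2r^{2n+1}e^{-r^2}}{n!} = 2r$), so that $\mathbb{E}[S_R] = \int_0^\infty f(r-R)\,2r\,\dd r = 2\int_{-R}^\infty (R+x)f(x)\,\dd x$. Since $f$ is compactly supported, for all large $R$ this equals $2R\int f + 2\int x f$, whence $\tfrac{1}{\sqrt{R}}\mathbb{E}[S_R] - 2\sqrt{R}\int f = \tfrac{2}{\sqrt{R}}\int x f(x)\,\dd x \to 0$, settling (i).

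The crux is (ii). I would apply the variance formula \eqref{Variance} to the planar statistic $z\mapsto f(|z|-R)$ with the Ginibre kernel (legitimate since $K_{\mathcal{G}}$ is a Hermitian projection kernel), using the identity $|K_{\mathcal{G}}(z,w)|^2 = \pi^{-2} e^{-|z-w|^2}$, to obtain
\[ \Var(S_R) = \frac{1}{2\pi^2}\int_{\mathbb{C}^2} \big[f(|z|-R)-f(|w|-R)\big]^2 e^{-|z-w|^2}\,\dd m(z)\,\dd m(w). \]
Passing to polar coordinates $z=re^{i\theta}$, $w=se^{i\phi}$ and writing $|z-w|^2 = (r-s)^2 + 2rs(1-\cos\psi)$ with $\psi=\theta-\phi$, the angular integral is $I(r,s) := \int_{-\pi}^{\pi} e^{-2rs(1-\cos\psi)}\,\dd\psi \sim \sqrt{\pi/(rs)}$ as $rs\to\infty$ by Laplace's method. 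Substituting $r=R+x$, $s=R+y$ then gives
\[ \Var(S_R) = \frac{1}{\pi}\int_{\mathbb{R}^2} \big[f(x)-f(y)\big]^2 e^{-(x-y)^2}\,I(R+x,R+y)\,(R+x)(R+y)\,\dd x\,\dd y, \]
where $I(R+x,R+y)\,(R+x)(R+y) \sim \sqrt{\pi}\,R$ for bounded $x,y$, so that $\Var(S_R)/R \to \tfrac{1}{\sqrt{\pi}}\int\int [f(x)-f(y)]^2 e^{-(x-y)^2}\,\dd x\,\dd y = V_f$. The main obstacle is making this asymptotic rigorous: one must control $I(r,s)$ uniformly in $(x,y)$ over the compact support region, bound the errors in $1-\cos\psi\approx\psi^2/2$ and in extending the $\psi$-range to $\mathbb{R}$, and justify passing the limit inside the $(x,y)$-integral by dominated convergence — the latter legitimate because the integrand is dominated by $4M^2 e^{-(x-y)^2}\big(\mathbf{1}_{x\in K}+\mathbf{1}_{y\in K}\big)$, which uses only that $f$ is bounded and compactly supported, not continuous.

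Finally, for (iii) and the conclusion, set $s_R^2 := \Var(S_R) \sim R\,V_f$. We may assume $V_f>0$, since otherwise $f=0$ almost everywhere and the statement is trivial; then $s_R\to\infty$. As each centered summand is bounded by $2M$, for $R$ large enough one has $\varepsilon s_R > 2M$ and every truncation indicator in the Lindeberg sum vanishes identically, so the Lindeberg condition holds trivially. The Lindeberg--Feller theorem then gives $(S_R-\mathbb{E}[S_R])/s_R \to \mathcal{N}(0,1)$ in law, and since $s_R/\sqrt{R}\to\sqrt{V_f}$, combining with (i) via Slutsky's lemma yields $\tfrac{1}{\sqrt{R}}S_R - 2\sqrt{R}\int f \to \mathcal{N}(0,V_f)$, as claimed.
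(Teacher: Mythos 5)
Your proposal is correct, and its first two steps coincide with the paper's own proof: the centering computation via $K_{\mathcal{G}}(z,z)=1/\pi$ and formula \eqref{Expected value}, and the variance computation via formula \eqref{Variance}, polar coordinates, the Gaussian approximation of the angular integral, and dominated convergence, are exactly what Subsections~\ref{Expected value calculation for Extreme Ginibre Normality} and \ref{Variance calculation for Extreme Ginibre Normality} do. Where you genuinely diverge is the last step: the paper feeds the expectation and variance asymptotics into Soshnikov's central limit theorem (Theorem~\ref{Soshnikov}), whereas you exploit the Kostlan independence of the moduli $\{\rho_n\}$ and run Lindeberg--Feller on the series $\sum_n f(\rho_n-R)$. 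Your route is more elementary and makes the Gaussian limit transparent: the centered summands are uniformly bounded by $2\sup|f|$, so once $\varepsilon s_R>2\sup|f|$ every Lindeberg truncation vanishes identically. On the other hand, it is tied to the rotational invariance of the ensemble, while Soshnikov's theorem needs no independence and is applied uniformly across Theorems~\ref{Extreme Hyperbolic Normality}--\ref{White Noise Ginibre}; the paper in fact states it chose the kernel-formula route for the Ginibre case precisely ``to emphasize a slightly different way.'' Two small points you should make explicit: the row $\{f(\rho_n-R)\}_{n\geq 0}$ has countably many nonzero-variance entries, so you need the (routine) extension of Lindeberg--Feller to infinite rows, justified here because $\sum_n \Var\big(f(\rho_n-R)\big)\leq (\sup|f|)^2\,\mathbb{E}\big[\mathcal{G}(\{z:|z|-R\in\supp f\})\big]<\infty$ and the summands are uniformly bounded; and your reduction to $V_f>0$ is fine, since $V_f=0$ forces $f=0$ a.e., whence $S_R=0$ a.s.

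One technical slip needs repair. Your claimed dominating function $4M^2e^{-(x-y)^2}\big(\chi_K(x)+\chi_K(y)\big)$ does not dominate the $(x,y)$-integrand pointwise: after dividing by $R$, the integrand carries the factor $I(R+x,R+y)(R+x)(R+y)/R$, and the uniform bound $I(r,s)\leq C/\sqrt{rs}$ (from $1-\cos\psi\geq 2\psi^2/\pi^2$ on $[-\pi,\pi]$) only yields $I(R+x,R+y)(R+x)(R+y)/R\leq C\sqrt{(1+x/R)(1+y/R)}$, which is unbounded on the domain $x,y\in[-R,+\infty)$ — take $x\in\supp f$ fixed and $y$ of order $R^3$, and this factor is of order $R$. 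The cure is exactly the paper's dominating function: since $\sqrt{(1+x/R)(1+y/R)}\leq\sqrt{(1+|x|)(1+|y|)}$ for $R\geq1$ on that domain, dominate instead by $C M^2\big(\chi_K(x)+\chi_K(y)\big)\sqrt{(1+|x|)(1+|y|)}\,e^{-(x-y)^2}$, which is integrable because the polynomial factor is harmless against $e^{-(x-y)^2}$ once one of the two variables is confined to the compact set $K=\supp f$. With this correction, your argument is complete.
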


\subsubsection{Convergence towards white noise}

\begin{theorem}\label{White Noise Hyperbolic}
	Suppose that $a_R$ satisfies $1\ll a_R\ll e^R$ as $R\to+\infty$. Denote \break $C_R^{(\alpha)}=\alpha e^R/8$ for each $\alpha>0$. Then for any bounded measurable and compactly supported function $f: \mathbb{R}\to\mathbb{R}$,
	\begin{equation*}
		\begin{split}
			\sqrt{\frac{a_R}{C_R^{(\alpha)}}}\sum_{z\in\mathcal{H}_{\alpha}}f\big(a_R(|z|_{\mathrm{h}}-R)\big)-2\sqrt{\frac{C_R^{(\alpha)}}{a_R}}\int_{\mathbb{R}}f(x)\mathrm{d}x\xrightarrow[R\to+\infty]{\mathrm{law}}\mathcal{N}\Big(0,2\int_{\mathbb{R}}f^2(x)\mathrm{d}x\Big).
		\end{split}
	\end{equation*}
\end{theorem}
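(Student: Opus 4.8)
The plan is to reduce everything to a central limit theorem for a sum of independent random variables, exploiting the Kostlan-type description recalled just above: the family $\{|z|_{\mathrm{h}}: z\in\mathcal{H}_{\alpha}\}$ has the same law as $\{S_n: n\in\mathbb{N}\}$, where $S_n=|\rho_n^{(\alpha)}|_{\mathrm{h}}$ and the $\rho_n^{(\alpha)}$ are independent. Consequently $W_R:=\sum_{z\in\mathcal{H}_{\alpha}}f(a_R(|z|_{\mathrm{h}}-R))$ is a sum of independent summands $\sum_{n\geq0}f(a_R(S_n-R))$, almost surely finite since $f$ is compactly supported. First I would record the density of each $S_n$: substituting $r=\tanh(s/2)$ in the law of $\rho_n^{(\alpha)}$ gives $p_n(s)=k_n^{(\alpha)}\tanh^{2n+1}(s/2)\cosh^{-2\alpha}(s/2)$.

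The key preliminary object is the one-point intensity $P(s):=\sum_n p_n(s)$. Using the generating identity $\sum_n k_n^{(\alpha)}y^n=\alpha(1-y)^{-(\alpha+1)}$ with $y=\tanh^2(s/2)$ collapses the sum to $P(s)=\frac{\alpha}{2}\sinh s$ (this is the intensity of the radial process pushed to the hyperbolic scale, and agrees with $\frac{\alpha}{\pi}(1-|z|^2)^{-2}$ in Cartesian coordinates). The first moment is then immediate: $\mathbb{E}[W_R]=\int_0^\infty f(a_R(s-R))\frac{\alpha}{2}\sinh s\,\mathrm{d}s$, and the change of variables $x=a_R(s-R)$ together with $\frac{\alpha}{2}\sinh(R+x/a_R)\sim\frac{\alpha}{4}e^R$ (legitimate because $f$ has compact support and $a_R\to\infty$) yields $\mathbb{E}[W_R]\sim\frac{2C_R^{(\alpha)}}{a_R}\int_{\mathbb{R}}f$. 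Scaling by $\sqrt{a_R/C_R^{(\alpha)}}$ reproduces exactly the deterministic counterterm $2\sqrt{C_R^{(\alpha)}/a_R}\int_{\mathbb{R}}f$, so the centered statistic is asymptotically mean zero; I would check along the way that the remaining bias coming from $\int f(x)(e^{x/a_R}-1)\,\mathrm{d}x$ is swept into the limit.

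For the variance, independence gives $\Var(W_R)=\sum_n\mathbb{E}[f^2(a_R(S_n-R))]-\sum_n(\mathbb{E}[f(a_R(S_n-R))])^2$. The first sum equals $\int f^2(a_R(s-R))P(s)\,\mathrm{d}s\sim\frac{2C_R^{(\alpha)}}{a_R}\int_{\mathbb{R}}f^2$, so after multiplying by $a_R/C_R^{(\alpha)}$ it converges to the target $2\int_{\mathbb{R}}f^2$. The crux is to show that the subtracted sum is negligible, i.e. $\frac{a_R}{C_R^{(\alpha)}}\sum_n(\mathbb{E}[f(a_R(S_n-R))])^2\to0$. Each term $\mathbb{E}[f(a_R(S_n-R))]=\frac{1}{a_R}\int f(x)\,p_n(R+x/a_R)\,\mathrm{d}x$ is, up to $\|f\|_\infty$, the mass that $p_n$ puts on a window of width $O(1/a_R)$ about $s=R$; a uniform bound $p_n(s)\lesssim1$ there—following from the Beta structure $\rho_n^2\sim\mathrm{Beta}(n+1,\alpha)$, which keeps every $S_n$ spread over an order-one $s$-scale—forces $\mathbb{E}[f(a_R(S_n-R))]=O(1/a_R)$ uniformly in $n$, whence $\sum_n(\mathbb{E}[f(a_R(S_n-R))])^2\lesssim\frac{1}{a_R}\int_{\{|s-R|\leq M/a_R\}}P(s)\,\mathrm{d}s\lesssim\frac{C_R^{(\alpha)}}{a_R^2}$. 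This is precisely where the white-noise covariance originates, in contrast with Theorem \ref{Extreme Hyperbolic Normality}: sending $a_R\to\infty$ shrinks each summand's window so that neighbouring moduli no longer overlap, killing the $(\mathbb{E}f)^2$ contribution that produces the nontrivial kernel at the fixed scale $a_R=1$.

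Finally I would invoke the Lindeberg–Feller theorem for the triangular array $X_{n,R}=\sqrt{a_R/C_R^{(\alpha)}}\,(f(a_R(S_n-R))-\mathbb{E}[f(a_R(S_n-R))])$. Since $|X_{n,R}|\leq2\|f\|_\infty\sqrt{a_R/C_R^{(\alpha)}}$ and the hypothesis $a_R\ll e^R$ gives $a_R/C_R^{(\alpha)}=8a_R/(\alpha e^R)\to0$, the summands are uniformly bounded by a null sequence and the Lindeberg condition is automatic; combined with $\sum_n\Var(X_{n,R})\to2\int_{\mathbb{R}}f^2$ from the previous paragraph, this delivers convergence to $\mathcal{N}(0,2\int_{\mathbb{R}}f^2)$. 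I expect the main obstacle to be the uniform density estimate underlying the negligibility of $\sum_n(\mathbb{E}f)^2$, i.e. the quantitative assertion that no individual modulus concentrates faster than the zoom-in scale $1/a_R$; the moment asymptotics and the CLT itself are then routine.
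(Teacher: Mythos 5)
Your proposal follows the same overall route as the paper's proof: pass to the independent moduli, compute the mean and the variance of $W_R=\sum_{z\in\mathcal{H}_{\alpha}}f\big(a_R(|z|_{\mathrm{h}}-R)\big)$, and conclude by a central limit theorem. Two ingredients are genuinely different and both are fine: the closed formula $P(s)=\frac{\alpha}{2}\sinh s$ for the intensity, obtained by summing the generating series $\sum_{n}k_n^{(\alpha)}y^{n}=\alpha(1-y)^{-(\alpha+1)}$, where the paper instead approximates the sum over $n$ by an integral (the substitution $n=\lfloor te^{R+x/a_R}\rfloor$ together with Lemmas~\ref{lem:BoundExp} and~\ref{lem:Stirling}); and the Lindeberg--Feller theorem in place of Soshnikov's Theorem~\ref{Soshnikov}, which is legitimate here because the summands are independent and uniformly bounded by $2\|f\|_{\infty}\sqrt{a_R/C_R^{(\alpha)}}\to0$. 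Your variance analysis is complete and agrees with the paper's: the diagonal term gives $\frac{a_R}{C_R^{(\alpha)}}\sum_n\mathbb{E}\big[f^2\big(a_R(S_n-R)\big)\big]\to2\int f^2$, and the bound $\sup_{n,s}p_n(s)<\infty$ --- which does hold, but should be proved from the explicit formula $p_n(s)=k_n^{(\alpha)}\tanh^{2n+1}(s/2)\cosh^{-2\alpha}(s/2)$ rather than from the Beta density alone, since the latter is unbounded when $\alpha<1$ and it is the hyperbolic Jacobian that saves the day --- yields $\sum_n\big(\mathbb{E}\big[f\big(a_R(S_n-R)\big)\big]\big)^2=O\big(C_R^{(\alpha)}/a_R^2\big)$, exactly the paper's estimate for this term.

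The genuine gap is the step you postponed: the bias is \emph{not} ``swept into the limit'', and it cannot be on the whole range $1\ll a_R\ll e^R$. Your exact formula gives, as soon as $\supp f\subset(-Ra_R,+\infty)$,
\begin{equation*}
	\begin{split}
		\sqrt{\frac{a_R}{C_R^{(\alpha)}}}\,\mathbb{E}[W_R]-2\sqrt{\frac{C_R^{(\alpha)}}{a_R}}\int_{\mathbb{R}}f(x)\,\mathrm{d}x
		&=2\sqrt{\frac{C_R^{(\alpha)}}{a_R}}\int_{\mathbb{R}}f(x)\big(e^{x/a_R}-1\big)\,\mathrm{d}x+O\big(e^{-3R/2}\big)\\
		&=\frac{2\sqrt{C_R^{(\alpha)}}}{a_R^{3/2}}\int_{\mathbb{R}}xf(x)\,\mathrm{d}x+O\Big(\frac{\sqrt{C_R^{(\alpha)}}}{a_R^{5/2}}\Big)+O\big(e^{-3R/2}\big),
	\end{split}
\end{equation*}
and $\sqrt{C_R^{(\alpha)}}\,a_R^{-3/2}\asymp e^{R/2}a_R^{-3/2}$ tends to $0$ when $a_R\gg e^{R/3}$ but diverges when $a_R\ll e^{R/3}$. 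Thus for $a_R=R$, say, and $f=\chi_{[0,1]}$, the left-hand side in the theorem is the sum of a term converging in law to $\mathcal{N}(0,2\int f^2)$ and a deterministic term of order $e^{R/2}R^{-3/2}\to+\infty$, so the stated convergence fails in that regime and no completion of your sketch can establish it there. This is not a defect of your method but of the statement and of the paper's own proof: at the corresponding point in Section~\ref{Proof of White Noise Hyperbolic} the factor $e^{x/a_R}$ is replaced by $1$ with a claimed error $O(e^{-R})$, although the true error is $O(1/a_R)$, so the paper's bound $\mathbb{E}[W_R]=\frac{2C_R^{(\alpha)}}{a_R}\int f+O(1/a_R)$ should read $\mathbb{E}[W_R]=\frac{2C_R^{(\alpha)}}{a_R}\int f+O\big(C_R^{(\alpha)}/a_R^{2}\big)$. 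Your argument does prove the theorem verbatim under the additional restriction $e^{R/3}\ll a_R\ll e^R$, and proves it on the full range $1\ll a_R\ll e^R$ if the centering is replaced by $2\sqrt{C_R^{(\alpha)}/a_R}\int_{\mathbb{R}} f(x)e^{x/a_R}\,\mathrm{d}x$, that is, by $\sqrt{a_R/C_R^{(\alpha)}}\,\mathbb{E}[W_R]$. (The Ginibre counterpart, Theorem~\ref{White Noise Ginibre}, is immune to this problem: there the radial intensity $2r$ is linear in $r$, and the analogous bias is $O\big(a_R^{-3/2}R^{-1/2}\big)$.)
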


\begin{theorem}\label{White Noise Ginibre}
	Suppose that $a_R$ satisfies $1\ll a_R\ll R$ as $R\to+\infty$. Then for any bounded measurable and compactly supported function $f: \mathbb{R}\to\mathbb{R}$,
	\begin{equation*}
		\begin{split}
			\sqrt{\frac{a_R}{R}}\sum_{z\in\mathcal{G}}f\big(a_R(|z|-R)\big)-2\sqrt{\frac{R}{a_R}}\int_{\mathbb{R}}f(x)\mathrm{d}x\xrightarrow[R\to+\infty]{\mathrm{law}}\mathcal{N}\Big(0,2\int_{\mathbb{R}}f^2(x)\mathrm{d}x\Big).
		\end{split}
	\end{equation*}
\end{theorem}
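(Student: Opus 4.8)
The plan is to exploit the fact, recalled above, that the point process of moduli $\{|z| : z\in\mathcal{G}\}$ has the same law as $\{\rho_n : n\in\mathbb{N}\}$ with the $\rho_n$ \emph{independent} and $\rho_n$ distributed according to the density $p_n(r) = \frac{2r^{2n+1}e^{-r^2}}{n!}$ on $[0,\infty)$. Thus it suffices to prove the stated convergence for the sum of independent random variables
\begin{equation*}
	X_R := \sum_{n=0}^\infty \xi_{n,R}, \qquad \xi_{n,R} := f\big(a_R(\rho_n - R)\big).
\end{equation*}
Since the $\xi_{n,R}$ are bounded and independent, the natural tool is the Lindeberg--Feller central limit theorem for triangular arrays, and the whole problem reduces to an asymptotic analysis of the mean $m_R := \mathbb{E}[X_R]$ and the variance $\sigma_R^2 := \Var(X_R)$. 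The one input I would isolate first is the elementary identity $\sum_{n\geq 0} p_n(r) = 2r$, which expresses that the radial intensity of $\mathcal{G}$ is $2r\,\mathrm{d}r$ (equivalently, that its planar intensity is the constant $1/\pi$).

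For the mean, after the change of variables $x = a_R(r-R)$ and an interchange of sum and integral justified by Tonelli,
\begin{equation*}
	m_R = \int_0^\infty f\big(a_R(r-R)\big)\,2r\,\mathrm{d}r = \frac{2R}{a_R}\int_{\mathbb{R}} f(x)\,\mathrm{d}x + \frac{2}{a_R^2}\int_{\mathbb{R}} x f(x)\,\mathrm{d}x,
\end{equation*}
so that $\sqrt{a_R/R}\,m_R = 2\sqrt{R/a_R}\int_{\mathbb{R}} f + o(1)$; hence replacing the random centering $m_R$ by the deterministic one in the statement is harmless. For the variance I would write $\sigma_R^2 = \sum_n \mathbb{E}[\xi_{n,R}^2] - \sum_n (\mathbb{E}\xi_{n,R})^2$. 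The first sum is treated exactly as the mean, with $f$ replaced by $f^2$, giving $\sum_n \mathbb{E}[\xi_{n,R}^2] = \frac{2R}{a_R}\int_{\mathbb{R}} f^2 + O(a_R^{-2})$. The second sum is the genuinely delicate term; here I would use Cauchy--Schwarz in the form $(\mathbb{E}\xi_{n,R})^2 \leq \mathbb{E}[\xi_{n,R}^2]\cdot\mathbb{P}(\xi_{n,R}\neq 0)$ together with the uniform bound $\sup_{n,r} p_n(r) < \infty$ (a short Stirling estimate shows the maximum of $p_n$, attained at $r=\sqrt{n+1/2}$, converges to $\sqrt{2/\pi}$). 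Since $f$ is compactly supported, say in $[-M,M]$, the event $\{\xi_{n,R}\neq 0\}$ forces $|\rho_n - R|\leq M/a_R$, whence $\sup_n \mathbb{P}(\xi_{n,R}\neq 0) = O(1/a_R)$; consequently
\begin{equation*}
	\sum_n (\mathbb{E}\xi_{n,R})^2 \leq \Big(\sup_n \mathbb{P}(\xi_{n,R}\neq 0)\Big)\sum_n \mathbb{E}[\xi_{n,R}^2] = O(a_R^{-1})\cdot \frac{2R}{a_R}\int_{\mathbb{R}} f^2,
\end{equation*}
which is negligible next to the first sum because $a_R\to\infty$. Multiplying by $a_R/R$ then yields $\frac{a_R}{R}\sigma_R^2 \to 2\int_{\mathbb{R}} f^2(x)\,\mathrm{d}x$.

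Finally I would feed this into the Lindeberg--Feller theorem applied to the array $(\xi_{n,R}-\mathbb{E}\xi_{n,R})/\sigma_R$. The summands are uniformly bounded by $2\|f\|_\infty$, while $\sigma_R^2 \sim \tfrac{2R}{a_R}\int_{\mathbb{R}} f^2 \to \infty$ precisely because $a_R \ll R$; hence for any fixed $\varepsilon>0$ the truncation threshold $\varepsilon\sigma_R$ eventually exceeds $2\|f\|_\infty$ and the Lindeberg sum vanishes identically for large $R$. This gives $(X_R - m_R)/\sigma_R \xrightarrow{\mathrm{law}} \mathcal{N}(0,1)$, and combining with the variance asymptotics and the mean computation produces the claimed limit $\mathcal{N}\big(0,2\int_{\mathbb{R}} f^2(x)\,\mathrm{d}x\big)$. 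The main obstacle is the variance analysis, and specifically the estimate showing that the ``self-energy'' term $\sum_n (\mathbb{E}\xi_{n,R})^2$ is negligible: this is exactly where both scale hypotheses enter, with $a_R\to\infty$ making the cross term vanish and $a_R\ll R$ forcing $\sigma_R\to\infty$ so that the array sits in the Gaussian rather than the Poisson regime. Everything else, including the central limit step itself, is then routine.
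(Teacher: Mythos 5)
Your proposal is correct, but it takes a genuinely different route from the paper's proof of this theorem. The paper never invokes the Kostlan/radial independence structure here: it computes the mean from $K_{\mathcal{G}}(z,z)=1/\pi$ in polar coordinates (as you do, in effect, via $\sum_n p_n(r)=2r$), but it computes the variance from the determinantal formula
\[
\Var\Big(\sum_{z\in\mathcal{G}}f\big(a_R(|z|-R)\big)\Big)=\frac{1}{2\pi^2}\int_{\mathbb{C}^2}\big[f\big(a_R(|z|-R)\big)-f\big(a_R(|w|-R)\big)\big]^2 e^{-|z-w|^2}\,\mathrm{d}m(z)\,\mathrm{d}m(w),
\]
splitting it into two triple integrals and passing to the limit by dominated convergence, and it concludes with Soshnikov's central limit theorem for determinantal processes rather than with Lindeberg--Feller. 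Your treatment of the cross term $\sum_n(\mathbb{E}\xi_{n,R})^2$ is also different in spirit: where the paper (in its hyperbolic analogues, which do use the independent radii) evaluates such sums asymptotically by a Riemann-sum argument, you dispose of it softly via Cauchy--Schwarz together with the uniform bound $\sup_{n,r}p_n(r)<\infty$, which is a clean and correct shortcut. What each approach buys: yours is more elementary and self-contained, needing only the classical CLT for triangular arrays and exploiting the special product structure of the Ginibre moduli; the paper's kernel-based computation displays the exact Gaussian covariance decay $e^{-|z-w|^2}$ and is the template that generalizes to linear statistics that are not functions of the moduli alone. One small point to tighten: Lindeberg--Feller is usually stated for finite rows, whereas your rows $\{\xi_{n,R}\}_{n\geq 0}$ are infinite; since $\sum_n\Var(\xi_{n,R})<\infty$, a routine truncation at a level $N_R$ with negligible tail variance (plus Chebyshev for the tail) reduces to the finite-row statement, so this is a formality rather than a gap, but it should be said.
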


Theorems~\ref{Extreme Hyperbolic Normality}-\ref{White Noise Ginibre} will be obtained as corollaries of the central limit theorem of Soshnikov.

\subsubsection{Convergence towards a Poisson point process}
\begin{theorem}\label{Hyperbolic Poisson}
	Let $\mathcal{P}_{\alpha/4}$ be the Poisson point process on $\mathbb{R}$ with constant intensity $\alpha/4$, $\alpha>0$. Then,
	\begin{equation*}
		\begin{split}
			\big\{e^{R}(|z|_{\mathrm{h}}-R):z\in\mathcal{H}_{\alpha}\big\}\xrightarrow[R\to+\infty]{\mathrm{law}}\mathcal{P}_{\alpha/4}.
		\end{split}
	\end{equation*}
\end{theorem}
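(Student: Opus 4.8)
The plan is to reduce the statement to a superposition of independent points and then prove convergence of Laplace functionals. By Kostlan's theorem (\cite[Theorem~4.7.1]{HKPV}), recalled above, the modulus process $\{|z|:z\in\mathcal{H}_{\alpha}\}$ has the law of a family $(\rho_n^{(\alpha)})_{n\ge0}$ of \emph{independent} random variables. Applying the fixed increasing bijection $r\mapsto\log\frac{1+r}{1-r}$ atom by atom, the hyperbolic modulus process $\{|z|_{\mathrm{h}}:z\in\mathcal{H}_{\alpha}\}$ has the law of $\{U_n:n\ge0\}$, where $U_n:=\log\frac{1+\rho_n^{(\alpha)}}{1-\rho_n^{(\alpha)}}$ are independent; hence the rescaled process $\Xi_R:=\{e^{R}(U_n-R):n\ge0\}$ is, for each $R$, a null array of independent single atoms. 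For such arrays it suffices to show that the Laplace functional $L_{\Xi_R}(g)=\mathbb{E}\big[\exp\big(-\sum_n g(e^{R}(U_n-R))\big)\big]$ converges, for every nonnegative continuous compactly supported $g$, to $\exp\big(-\tfrac{\alpha}{4}\int_{\mathbb{R}}(1-e^{-g(x)})\,\mathrm{d}x\big)$, the Laplace functional of $\mathcal{P}_{\alpha/4}$.

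First I would compute the law of each $U_n$. With $r=\tanh(u/2)$ one has $1-r^2=\mathrm{sech}^2(u/2)$ and $\mathrm{d}r=\tfrac12(1-r^2)\,\mathrm{d}u$, so the density of $U_n$ is
\[
q_n(u)=k_n^{(\alpha)}\,r^{2n+1}(1-r^2)^{\alpha},\qquad r=\tanh(u/2).
\]
Summing over $n$ with the identity $\sum_{n\ge0}k_n^{(\alpha)}x^n=\alpha(1-x)^{-\alpha-1}$ (which also identifies the one-point function) gives the total intensity
\[
\rho(u):=\sum_{n\ge0}q_n(u)=\frac{\alpha\,r}{1-r^2}=\frac{\alpha}{2}\sinh u .
\]
Under the scaling $x=e^{R}(u-R)$ the intensity measure of $\Xi_R$ is $\Lambda_R(\mathrm{d}x)=\rho(R+e^{-R}x)\,e^{-R}\,\mathrm{d}x$, and since $\tfrac{\alpha}{2}\sinh(R+e^{-R}x)\,e^{-R}\to\tfrac{\alpha}{4}$ pointwise, dominated convergence on compacts yields $\Lambda_R\to\tfrac{\alpha}{4}\,\mathrm{d}x$ vaguely.

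The crucial point is a \emph{uniform} negligibility estimate for the individual atoms. Using $r^{2n+1}\le e^{-n(1-r^2)}$, the bound $k_n^{(\alpha)}\le C_\alpha (n+1)^{\alpha}$, the inequality $1-r^2\le1$, and $\sup_{t>0}t^{\alpha}e^{-t}<\infty$, one obtains a bound independent of both $n$ and $u$,
\[
q_n(u)\le C_\alpha (n+1)^{\alpha}(1-r^2)^{\alpha}e^{-n(1-r^2)}\le c_\alpha ,
\]
so that for every bounded Borel set $B$,
\[
\max_{n\ge0}\mathbb{P}\big(e^{R}(U_n-R)\in B\big)=\max_{n\ge0}\int_B q_n(R+e^{-R}x)\,e^{-R}\,\mathrm{d}x\le c_\alpha\,|B|\,e^{-R}\xrightarrow[R\to\infty]{}0.
\]

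Finally I would assemble the Laplace functional. By independence, $L_{\Xi_R}(g)=\prod_{n}(1-a_n^{(R)})$ with $a_n^{(R)}=\int_{\mathbb{R}}(1-e^{-g(x)})\,q_n(R+e^{-R}x)\,e^{-R}\,\mathrm{d}x\ge0$. The intensity computation gives $\sum_n a_n^{(R)}=\int_{\mathbb{R}}(1-e^{-g})\,\mathrm{d}\Lambda_R\to\tfrac{\alpha}{4}\int_{\mathbb{R}}(1-e^{-g(x)})\,\mathrm{d}x$, while the uniform bound gives $\max_n a_n^{(R)}\to0$. Since $\log(1-a)=-a+O(a^2)$ and $\sum_n (a_n^{(R)})^2\le(\max_n a_n^{(R)})\sum_n a_n^{(R)}\to0$, it follows that $\log L_{\Xi_R}(g)\to-\tfrac{\alpha}{4}\int_{\mathbb{R}}(1-e^{-g(x)})\,\mathrm{d}x$, which is exactly the Laplace functional of $\mathcal{P}_{\alpha/4}$; convergence of Laplace functionals then yields the claimed convergence in law. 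I expect the main obstacle to lie in the uniform control of the previous paragraph: individually the densities $q_n$ can be large (their peaks grow with $n$), and the whole argument hinges on the fact that in the relevant window $u\approx R$ — equivalently $n\asymp e^{R}$ — no single atom carries more than $O(e^{-R})$ mass, which is what makes the array null and forces Poissonian rather than Gaussian fluctuations at this extreme scale.
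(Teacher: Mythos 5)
Your proof is correct, and its probabilistic core is the same as the paper's: reduce to the independent radii via Kostlan's theorem, check that no single atom carries non-negligible mass, check convergence of the total intensity on compacts, and pass to the limit in a product over independent atoms. Indeed, your Laplace-functional computation is the paper's Lemma~\ref{lemma} in disguise: taking $f=1-e^{-g}$ turns $\mathbb{E}\big[\prod_n(1-f(X_n^{(R)}))\big]$ into your $L_{\Xi_R}(g)$, and your argument ($\log(1-a)=-a+O(a^2)$, $\max_n a_n^{(R)}\to 0$, $\sum_n a_n^{(R)}$ convergent) is precisely the proof of that lemma, followed by Corollary~\ref{corollary}. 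The genuine difference lies in how the intensity convergence is established. The paper verifies $\sum_n\mathbb{E}\big[f(X_n^{(R)})\big]\to\frac{\alpha}{4}\int_{\mathbb{R}}f(x)\,\mathrm{d}x$ by rerunning the Riemann-sum asymptotics of the earlier sections (the substitution $n=\lfloor te^{R+x/e^R}\rfloor$ together with Lemmas~\ref{lem:BoundExp} and~\ref{lem:Stirling}), whereas you sum the one-point intensity in closed form using $\sum_{n\ge 0}k_n^{(\alpha)}x^n=\alpha(1-x)^{-\alpha-1}$, obtaining the exact intensity $\frac{\alpha}{2}\sinh u\,\mathrm{d}u$ for $\{|z|_{\mathrm{h}}:z\in\mathcal{H}_\alpha\}$, after which the limit $\frac{\alpha}{2}e^{-R}\sinh(R+e^{-R}x)\to\frac{\alpha}{4}$ is immediate. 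This is shorter and more transparent: no asymptotic lemmas are needed in the Poisson regime, and the constant $\alpha/4$ is visible at once. Likewise, your uniform density bound $q_n(u)\le c_\alpha$ (valid for all $n$ and all $u$, via $k_n^{(\alpha)}\le C_\alpha(n+1)^\alpha$, $r^{2n}\le e^{-n(1-r^2)}$ and $\sup_{t>0}t^\alpha e^{-t}<\infty$) is a slightly stronger and cleaner form of the paper's window estimate $\sup_{n\ge 0}\mathbb{P}\big(X_n^{(R)}\in[-T,T]\big)=O(e^{-R})$, though it rests on the same chain of inequalities. What the paper's route buys instead is uniformity of method: the identical Riemann-sum computation serves the Gaussian, white-noise and Poisson regimes alike, so nothing new has to be computed for Theorem~\ref{Hyperbolic Poisson}.
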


\begin{theorem}\label{Ginibre Poisson}
	Let $\mathcal{P}_2$ be the Poisson point process on $\mathbb{R}$ with constant intensity $2$. Then,
	\begin{equation*}
		\begin{split}
			\big\{R(|z|-R):z\in\mathcal{G}\big\}\xrightarrow[R\to+\infty]{\mathrm{law}}\mathcal{P}_2.
		\end{split}
	\end{equation*}
\end{theorem}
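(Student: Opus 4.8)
The plan is to reduce to the independent model furnished by Kostlan's theorem and then identify the limiting Laplace functional. By \cite[Theorem~4.7.1]{HKPV}, the process $\{|z|:z\in\mathcal{G}\}$ has the same law as $\{\rho_n:n\in\mathbb{N}\}$, where the $\rho_n$ are \emph{independent} with densities $g_n(r)=\tfrac{2r^{2n+1}e^{-r^2}}{n!}$. The crucial elementary identity is that the total radial intensity telescopes:
\begin{equation*}
\sum_{n\geq0}g_n(r)=2re^{-r^2}\sum_{n\geq0}\frac{r^{2n}}{n!}=2r .
\end{equation*}
Thus, writing $\Xi_R:=\sum_{n\geq0}\delta_{R(\rho_n-R)}$, after the scaling $x=R(r-R)$ the mean number of points of $\Xi_R$ in a bounded set converges to twice its Lebesgue measure, which already pins down the intensity $2$ of the conjectured limit and reduces everything to a Poisson limit theorem for a superposition of independent single points.

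Next I would establish $\Xi_R\xrightarrow{\mathrm{law}}\mathcal{P}_2$ through Laplace functionals. For a nonnegative continuous compactly supported $g$, independence of the $\rho_n$ gives
\begin{equation*}
\mathbb{E}\big[e^{-\langle\Xi_R,g\rangle}\big]=\prod_{n\geq0}\mathbb{E}\big[e^{-g(R(\rho_n-R))}\big]=\prod_{n\geq0}\big(1-a_n^{(R)}\big),\qquad a_n^{(R)}=\int_{\mathbb{R}}\big(1-e^{-g(R(r-R))}\big)g_n(r)\,\mathrm{d}r .
\end{equation*}
Summing over $n$ (all terms being nonnegative, by Tonelli) and inserting the intensity identity, the substitution $x=R(r-R)$ yields
\begin{equation*}
\sum_{n\geq0}a_n^{(R)}=\int_{\mathbb{R}}\big(1-e^{-g(R(r-R))}\big)2r\,\mathrm{d}r=\int_{\mathbb{R}}\big(1-e^{-g(x)}\big)\Big(2+\tfrac{2x}{R^2}\Big)\mathrm{d}x\xrightarrow[R\to+\infty]{}2\int_{\mathbb{R}}\big(1-e^{-g(x)}\big)\mathrm{d}x .
\end{equation*}
Provided $\max_n a_n^{(R)}\to0$, the expansion $\log(1-a)=-a+O(a^2)$ controls the error by $\big(\max_n a_n^{(R)}\big)\sum_n a_n^{(R)}\to0$, so that the Laplace functional converges to $\exp\!\big(-2\int_{\mathbb{R}}(1-e^{-g(x)})\,\mathrm{d}x\big)$, which is exactly the Laplace functional of the homogeneous Poisson point process of intensity $2$. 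Since convergence of Laplace functionals characterizes convergence in law of point processes, this gives the theorem.

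The main obstacle is the infinitesimality (null-array) condition $\max_n a_n^{(R)}\to0$, i.e.\ that no single $\rho_n$ can by itself produce a point in a fixed window with non-negligible probability. Here I would use a uniform density bound $\sup_{n,r}g_n(r)\leq C$: differentiating $\log g_n$ shows $g_n$ is maximized at $r=\sqrt{n+\tfrac12}$, and Stirling's formula gives $g_n(\sqrt{n+\tfrac12})\to\sqrt{2/\pi}$ as $n\to\infty$, the remaining finitely many indices being bounded directly, so that the peak values stay bounded uniformly in $n$. Since $g$ has compact support, the set $\{r:R(r-R)\in\supp g\}$ is an interval of length $O(1/R)$, whence $a_n^{(R)}\leq C\cdot O(1/R)$ uniformly in $n$ and $\max_n a_n^{(R)}=O(1/R)\to0$. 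It is this uniform-density estimate, rather than the intensity computation, that is the delicate point: it is what rules out atoms in the limit and forces genuinely Poissonian (as opposed to clustered) behaviour.
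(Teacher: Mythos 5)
Your proposal is correct and follows essentially the same route as the paper: reduce to Kostlan's independent radii, prove convergence of the product/Laplace functional (the paper's Lemma~\ref{lemma} with $f=1-e^{-g}$ is exactly your Laplace-functional computation) using the telescoping intensity $\sum_n g_n(r)=2r$, and control the error via the null-array condition $\max_n a_n^{(R)}=O(1/R)$. The only cosmetic difference is how that condition is checked: the paper bounds $\sup_n \frac{R^{2n}}{n!}e^{-R^2}=O(e^{R^2}/R)\cdot e^{-R^2}$ by Stirling at $n\approx R^2$, while you bound the density peaks $\sup_{n,r}g_n(r)\le C$ by Stirling at $r=\sqrt{n+1/2}$ and multiply by the $O(1/R)$ window length — two equivalent estimates.
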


\begin{remark}
	An analog of the theorems above is the case of an independent and identically distributed sequence $(X_i)_{i \geq 1}$ of real random variables that follows a probability distribution $\mu$. The analogue of Theorem~\ref{Extreme Hyperbolic Normality} and Theorem~\ref{Extreme Ginibre Normality} is the classical central limit theorem that tells us that, for any compactly supported function $f:\mathbb R \to \mathbb R$, we have
	\[\frac{\sum_{i=1}^n f(X_i) - n \int \hspace{-1mm} f \mathrm d \mu}{\sqrt n} \xrightarrow[n \to +\infty]{\mathrm{law}}  \mathcal N \left(0,\frac{1}{2}\int_{\mathbb{R}^2} [f(x)-f(y)]^2 \mathrm d \mu(x) \mathrm d\mu(y) \right).\]
	If these random variables admit a density $\rho$ which is continuous at $0$,  we have the convergence towards a Poisson point process, the analogue of Theorem~\ref{Hyperbolic Poisson} and Theorem~\ref{Ginibre Poisson},
	\[\{n X_i : 1 \leq i \leq n\} \xrightarrow[n \to +\infty]{\mathrm{law}}  \mathcal P_{\rho(0)}.\]
	Notice that we can also see, at intermediate scalings,
	a convergence towards the white noise for
	the centered linear statistics.
\end{remark}

\section{The hyperbolic process proof of Theorem~\ref{Extreme Hyperbolic Normality}}
To study the limiting behaviour of the linear statistics
\begin{equation*}
	\begin{split}
		\sum_{z\in\mathcal{H}_{\alpha}}f\big(|z|_{\mathrm{h}}-R\big),
	\end{split}
\end{equation*}
we start by understanding
the asymptotics of its expected value. Later, we study its limiting variance and conclude by Soshnikov's central limit theorem \cite[Theorem 1]{Sos}.

We prepare two lemmas that contain some bounds that we will use.

\begin{lemma}
\label{lem:BoundExp}
There exists two constants $C, M>0$ such that,
for every $y \geq M$ and $t \geq 0$,
\[\left|\left(1-\frac{2}{y+1}\right)^{2\lfloor t y \rfloor}
- e^{-4t} \right| \leq \frac{C e^{-t/C}}{y} ,\]
where $\lfloor t y \rfloor$ is the biggest integer that does not exceed $ty$.
\end{lemma}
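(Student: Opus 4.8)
The plan is to reduce the statement to the control of a single exponent. Writing
\[
a = 1 - \frac{2}{y+1} = \frac{y-1}{y+1} \in (0,1), \qquad L = \log a < 0,
\]
we have $\left(1 - \frac{2}{y+1}\right)^{2\lfloor ty \rfloor} = e^{2\lfloor ty \rfloor L}$, and I would factor the difference as
\[
\left(1 - \frac{2}{y+1}\right)^{2\lfloor ty \rfloor} - e^{-4t} = e^{-4t}\left(e^{E} - 1\right), \qquad E := 2\lfloor ty \rfloor L + 4t .
\]
Everything then hinges on an estimate for $E$ that is simultaneously of size $O(1/y)$ and compatible with the decaying prefactor $e^{-4t}$.

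First I would record two elementary facts about $L$. With $u = \frac{2}{y+1}$ and the Taylor expansion $\log(1-u) = -u - \frac{u^2}{2} - \cdots$, for $y$ large enough that $u \leq \frac12$ one obtains $|L| \leq \frac{4}{y}$ together with the crucial cancellation of leading order,
\[
\left|2yL + 4\right| = \left|2y\log\left(1 - \tfrac{2}{y+1}\right) + 4\right| \leq \frac{C_0}{y^2},
\]
for an absolute constant $C_0$; indeed the $O(1/y)$ contributions cancel and $2yL + 4 = \frac{4}{(y+1)^2} + O(y^{-2})$. Writing $\lfloor ty \rfloor = ty - \{ty\}$ with the fractional part $\{ty\} \in [0,1)$, this yields the decomposition $E = t\,(2yL + 4) - 2\{ty\}\,L$, and hence, using $\{ty\} < 1$,
\[
|E| \leq t\,|2yL+4| + 2\{ty\}\,|L| \leq \frac{C_0\, t}{y^2} + \frac{8}{y} .
\]

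I would then combine these bounds through the elementary inequality $|e^{E} - 1| \leq |E|\,e^{|E|}$, valid for every real $E$, which gives
\[
\left|\left(1 - \tfrac{2}{y+1}\right)^{2\lfloor ty \rfloor} - e^{-4t}\right| = e^{-4t}\,|e^{E}-1| \leq |E|\,e^{-4t + |E|} .
\]
Choosing $M$ so that $C_0 \leq M^2$, every $y \geq M$ satisfies $\frac{C_0 t}{y^2} \leq t$, whence $-4t + |E| \leq -3t + \frac{8}{M}$. Since $t\,e^{-3t} \leq \frac{1}{2e}\,e^{-t}$ and $\frac{1}{y^2} \leq \frac{1}{y}$, the right-hand side is bounded by
\[
e^{8/M}\left(\frac{C_0}{y^2}\, t\,e^{-3t} + \frac{8}{y}\,e^{-3t}\right) \leq e^{8/M}\left(\frac{C_0}{2e} + 8\right)\frac{e^{-t}}{y} =: \frac{C_1\,e^{-t}}{y} ;
\]
taking $C = \max(C_1,1)$ and using $e^{-t} \leq e^{-t/C}$ then yields the claim.

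The main obstacle, and the point that dictates the whole strategy, is that the natural error in the exponent contains the term $t\,(2yL+4)$, which grows linearly in $t$; a crude estimate would therefore only produce something of order $\frac{C t}{y}$, with no decay in $t$. The resolution is that this linear growth is strictly dominated by the exponential gain $e^{-4t}$ coming from comparing two exponentially small quantities, so that the product still decays like $e^{-t}/y$. Making this quantitative requires precisely the leading-order cancellation $2yL + 4 = O(y^{-2})$ and the verification that it survives both multiplication by $t$ and the floor correction $-2\{ty\}\,L$; this bookkeeping is where the genuine work lies, the remaining estimates being routine.
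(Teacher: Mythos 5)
Your proof is correct, and it takes a genuinely different route from the paper's. The paper inserts the intermediate quantity $e^{-2(2\lfloor ty\rfloor+1)/(y+1)}$ and controls the two resulting differences by two applications of Lagrange's mean value theorem: one to the map $x\mapsto x^{-p}$, comparing the base $\bigl(1+\tfrac{1}{(y+1)/2-1}\bigr)^{(y+1)/2}$ with $e$ (distance $O(1/y)$), and one to the exponential function, comparing the exponent $\tfrac{2(2\lfloor ty\rfloor+1)}{y+1}$ with $4t$; the polynomial factors $(t+1)$ are absorbed at the end via $(t+1)^2e^{-t/C}\le \widetilde C e^{-t/\widetilde C}$. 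You instead factor the whole difference as $e^{-4t}(e^{E}-1)$ and reduce everything to a single estimate on the exponent $E$, extracting the key second-order cancellation $\bigl|2y\log\bigl(1-\tfrac{2}{y+1}\bigr)+4\bigr|=O(y^{-2})$ from the Taylor series of the logarithm, then concluding with $|e^{E}-1|\le|E|e^{|E|}$ and absorbing the linear growth of $|E|$ in $t$ into the surplus decay of $e^{-4t}$. The two arguments ultimately rest on the same analytic fact---the $O(1/y)$ rate in the classical limit defining $e$, which is exactly your cancellation---but your decomposition is more economical and self-contained: one elementary inequality instead of two mean value arguments, explicitly traceable constants, and it proves the lemma exactly as stated (exponent $2\lfloor ty\rfloor$), whereas the paper's proof works with $2\lfloor ty\rfloor+1$ (an immaterial difference, since the extra factor is $1+O(1/y)$). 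Your approach also makes transparent the point the paper handles implicitly, namely that the term $t(2yL+4)$ grows linearly in $t$ and must be beaten by the prefactor $e^{-4t}$. One cosmetic remark: your intermediate display $2yL+4=\tfrac{4}{(y+1)^2}+O(y^{-2})$ is redundant as written, since the displayed term is itself $O(y^{-2})$ and the true leading behaviour is $-\tfrac{4}{3y^2}+O(y^{-4})$; but you only use the bound $|2yL+4|\le C_0/y^2$, which is correct, so nothing is affected.
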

\begin{proof}[Proof of Lemma \ref{lem:BoundExp}]
We can use Lagrange's mean value theorem 
to control the difference
\begin{equation*}
	\begin{split}
		&\quad\,\left|\Big(1-\frac{2}{y+1}\Big)^{2{\lfloor ty\rfloor}+1}-e^{-4t}\right|\\
		&\leq\left|\Big[\Big(1+\frac{1}{\frac{y+1}{2}-1}\Big)^{\frac{y+1}{2}}\Big]^{-\frac{2(2\lfloor ty\rfloor+1)}{y+1}}-e^{-\frac{2(2\lfloor ty\rfloor+1)}{y+1}}\right|+\left|e^{-\frac{2(2\lfloor ty\rfloor+1)}{y+1}}-e^{-4t}\right|\\
		&=\frac{2(2\lfloor ty\rfloor+1)}{y+1}
		\frac{1}{\xi^{\frac{2(2\lfloor ty\rfloor+1)}{y+1}+1}}
		\left[\Big(1+\frac{1}{\frac{y+1}{2}-1}\Big)^{\frac{y+1}{2}}-e\right]
		+e^{\eta}
		\left|\frac{2(2\lfloor ty\rfloor+1)}{y+1}-4t\right| ,
	\end{split}
\end{equation*}
where $\xi\geq e$ and $\eta \leq -A (t+1)$ for some fixed positive constant $A<4$
and $y$ large enough.  The asymptotics of each of
these terms as $y$ goes to $+\infty$ gives us the result. 

Now, notice that there exist two constants $C,M>0$ such that,
for every $y\geq M$ and $t \geq 0$, we have the 
bounds \vspace{-3mm}

\begin{align*}
& \bullet	\displaystyle \frac{2(2\lfloor ty\rfloor+1)}{y+1}
\leq C (t + 1), 
& &	\bullet
	\frac{1}{\xi^{\frac{2(2\lfloor ty\rfloor+1)}{y+1}+1}}
\leq Ce^{-t/C},														\\
&  \bullet
	 \left[\Big(1+\frac{1}{\frac{y+1}{2}-1}\Big)^{\frac{y+1}{2}}-e\right] \leq \frac{C}{y}, 							
& & \bullet e^\eta \leq  C e^{-t/C},
		\\[1ex]
& \bullet \left|\frac{2(2\lfloor ty\rfloor+1)}{y+1}-4t\right|
	\leq C(t+1).
\end{align*}
The lemma follows by noticing that
$(t+1)^2 e^{-t/C} \leq \widetilde C e^{-t/\widetilde C}$
for some $\widetilde C>0$.
\end{proof}

For the next lemma, we recall the notation
\[k_n^{(\alpha)} = \frac{\Gamma(\alpha + n + 1)}{\Gamma(\alpha)
\Gamma(n+1)}.\]

\begin{lemma}
\label{lem:Stirling}
There exists a constant $C>0$ such that
\[\left|k_{\lfloor y \rfloor } - \frac{y^\alpha}{\Gamma(\alpha)}
 \right|\leq\left\{\begin{array}{ll}
			C,& y \in [0,1)\\
			C y^{\alpha-1},& y \in [1,+\infty)
		\end{array}\right.,
\]
where $\lfloor y \rfloor$ is the biggest integer that does not exceed $y$.

\end{lemma}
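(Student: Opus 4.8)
The plan is to split the estimate into the two regimes of the statement. For $y\in[0,1)$ the floor vanishes, $\lfloor y\rfloor=0$, so $k_{0}^{(\alpha)}=\Gamma(\alpha+1)/(\Gamma(\alpha)\Gamma(1))=\alpha$, while $0\le y^{\alpha}/\Gamma(\alpha)\le 1/\Gamma(\alpha)$; hence the difference is bounded by the constant $\alpha+1/\Gamma(\alpha)$, which settles the first line. All the real work is in the regime $y\ge 1$, where I write $n:=\lfloor y\rfloor\ge 1$ and record the two-sided comparison $y/2<n\le y$ and $y<n+1\le 2y$, so that $n$, $n+1$ and $y$ are all comparable.

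The core input is the Stirling-type asymptotic for a ratio of Gamma functions,
$$\frac{\Gamma(z+\alpha)}{\Gamma(z)}=z^{\alpha}\bigl(1+O(z^{-1})\bigr)\qquad(z\to+\infty),$$
which I would derive from the standard expansion $\log\Gamma(x)=(x-\tfrac12)\log x-x+\tfrac12\log(2\pi)+O(x^{-1})$ (or simply cite it). Applying it with $z=n+1$ gives
$$\Gamma(\alpha)\,k_{n}^{(\alpha)}=\frac{\Gamma(n+1+\alpha)}{\Gamma(n+1)}=(n+1)^{\alpha}+O\bigl((n+1)^{\alpha-1}\bigr).$$

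It then remains to replace $(n+1)^{\alpha}$ by $y^{\alpha}$. By the mean value theorem applied to $t\mapsto t^{\alpha}$ on the interval between $y$ and $n+1$, whose length is at most $1$, one has $\lvert (n+1)^{\alpha}-y^{\alpha}\rvert\le \alpha\,\theta^{\alpha-1}$ for some $\theta$ between $y$ and $n+1$; since $\theta\asymp y$ this is $O(y^{\alpha-1})$ (distinguishing $\alpha\ge 1$, where $t^{\alpha-1}$ is increasing, from $\alpha<1$, where it is decreasing, both handled by $y<n+1\le 2y$). Combining the two displays and using $(n+1)^{\alpha-1}=O(y^{\alpha-1})$ yields $\bigl|\Gamma(\alpha)k_{n}^{(\alpha)}-y^{\alpha}\bigr|\le C' y^{\alpha-1}$, and dividing by $\Gamma(\alpha)$ gives the claim for $y\ge 1$.

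The one point requiring care — and the only real obstacle — is that the Stirling asymptotic only furnishes a constant valid for $n\ge N_{0}$ for some threshold $N_{0}$. For the finitely many residual values $n\in\{1,\dots,N_{0}-1\}$ I would argue by boundedness: on each interval $[n,n+1)$ the quantity $\bigl|k_{n}^{(\alpha)}-y^{\alpha}/\Gamma(\alpha)\bigr|$ is a bounded continuous function of $y$, while $y^{\alpha-1}$ is bounded below by a positive constant on $[1,N_{0}]$, so the bound holds after enlarging $C$. Taking the maximum of the constants arising in the two regimes then produces a single $C$ that works, as required.
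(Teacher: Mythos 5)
Your proof is correct and takes essentially the same route as the paper's: both rest on the Stirling-series asymptotics for the Gamma ratio, $k_n^{(\alpha)} = \frac{n^{\alpha}}{\Gamma(\alpha)} + O(n^{\alpha-1})$, and handle $y\in[0,1)$ separately using the explicit value $k_0^{(\alpha)}=\alpha\neq 0$. The only difference is that you carefully fill in the steps the paper's two-line sketch leaves implicit, namely the mean-value comparison of $(n+1)^{\alpha}$ with $y^{\alpha}$ (with the case distinction on $\alpha$) and the compactness argument covering the finitely many indices below the Stirling threshold.
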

\begin{proof}[Proof of Lemma \ref{lem:Stirling}]

This is a consequence of Stirling's series
that tells us that
\begin{equation*}
	\begin{split}
		k_n^{(\alpha)}=\frac{\Gamma(\alpha+n+1)}{\Gamma(\alpha)\Gamma(n+1)}=\frac{n^{\alpha}}{\Gamma(\alpha)}+O(n^{\alpha-1})\quad\mathrm{as}\,\,n\to\infty.
	\end{split}
\end{equation*}
The piecewise inequality comes from
the fact that $k_0^{(\alpha)} \neq 0$.
\end{proof}

With these lemmas, it is easier to understand
the asymptotics of the expected value.

\subsection{Expected value calculation for Theorem~\ref{Extreme Hyperbolic Normality}}\label{Expected value calculation for Extreme Hyperbolic Normality}

We will show a bit more than it is needed about the expected value
asymptotics. We will see that
\begin{equation*}
	\begin{split}
		\mathbb{E}\Big[\sum_{z\in\mathcal{H}_{\alpha}}f\big(|z|_{\mathrm{h}}-R\big)\Big]=2C_R^{(\alpha)}\int_{\mathbb{R}}f(x)e^{x}\mathrm{d}x+O(1).
	\end{split}
\end{equation*}
We begin by writing
\begin{equation*}
	\begin{split}
		\mathbb{E}\Big[\sum_{z\in\mathcal{H}_{\alpha}}f\big(|z|_{\mathrm{h}}-R\big)\Big]&=\sum_{n=0}^{\infty}\mathbb{E}\Big[f\big(|\rho_n^{(\alpha)}|_{\mathrm{h}}-R\big)\Big]\\
		&=\sum_{n=0}^{\infty}\int_{0}^{+\infty}f\Big(\log{\frac{1+r}{1-r}}-R\Big)2k_n^{(\alpha)}r^{2n+1}(1-r^2)^{\alpha-1}\mathrm{d}r,
	\end{split}
\end{equation*}
where we recall the notation $k_n^{(\alpha)}=\frac{\Gamma(\alpha+n+1)}{\Gamma(\alpha)\Gamma(n+1)}$.
By taking $x=\log{\frac{1+r}{1-r}}-R$,
we get 
\[r=\frac{e^{R+x}-1}{e^{R+x}+1} \quad \mbox{and}
\quad \mathrm{d}r=\frac{2e^{R+x}}{(e^{R+x}+1)^2}\mathrm{d}x,\]
so that
\begin{equation*}
	\begin{split}
		&\quad\,\mathbb{E}\Big[\sum_{z\in\mathcal{H}_{\alpha}}f\big(|z|_{\mathrm{h}}-R\big)\Big]\\
		&=4\sum_{n=0}^{\infty}\int_{-R}^{+\infty}f(x)k_n^{(\alpha)}\Big(\frac{e^{R+x}-1}{e^{R+x}+1}\Big)^{2n+1}\Big[1-\Big(\frac{e^{R+x}-1}{e^{R+x}+1}\Big)^2\Big]^{\alpha-1}\frac{e^{R+x}}{(e^{R+x}+1)^2}\mathrm{d}x\\
		&=4^{\alpha}\int_{-R}^{+\infty}\sum_{n=0}^{\infty}f(x)k_n^{(\alpha)}\Big(1-\frac{2}{e^{R+x}+1}\Big)^{2n+1}\frac{e^{\alpha(R+x)}}{(e^{R+x}+1)^{2\alpha}}\mathrm{d}x	\\
				&=4^{\alpha}\int_{-R}^{+\infty}\sum_{n=0}^{\infty}f(x)k_n^{(\alpha)}\Big(1-\frac{2}{e^{R+x}+1}\Big)^{2n+1}
				\frac{e^{-\alpha(R+x)}}{(1+e^{-(R+x)})^{2\alpha}
				}\mathrm{d}x.
	\end{split}
\end{equation*}
Notice that we have interchanged the summation and integration here
which is possible since $f$
is bounded and compactly supported. Therefore, by taking $n=\lfloor te^{R+x}\rfloor$, we get
\begin{equation}\label{expected value one}
	\begin{split}
		&\quad\,e^{-R}\,\mathbb{E}\Big[\sum_{z\in\mathcal{H}_{\alpha}}f\big(|z|_{\mathrm{h}}-R\big)\Big]\\
		&=4^{\alpha}\int_{-R}^{+\infty}
		\left(\sum_{n=0}^{\infty}f(x) e^x k_n^{(\alpha)}\Big(1-\frac{2}{e^{R+x}+1}\Big)^{2n+1}
				\frac{e^{-\alpha(R+x)}}{(1+e^{-(R+x)})^{2\alpha}
				}
								(e^{-(R+x)}) \right)				
				\mathrm{d}x\\
		&=4^{\alpha}\int_{-R}^{+\infty}
		\left(\int_{0}^{+\infty}f(x) e^x k_{\lfloor te^{R+x}\rfloor}^{(\alpha)}\Big(1-\frac{2}{e^{R+x}+1}\Big)^{2{\lfloor te^{R+x}\rfloor}+1}\frac{e^{-\alpha(R+x)}}{(1+e^{-(R+x)})^{2\alpha}}
\mathrm{d}t \right)\mathrm{d}x.
	\end{split}
\end{equation}
We define the functions $\Theta_1$ and $\Theta_2$ of $(x,t,R)$ 
and the function $\Theta_3$ of $(x,R)$ by
\begin{equation}\label{estimation one}
	\begin{split}
		k_{\lfloor te^{R+x}\rfloor}^{(\alpha)}=\frac{t^{\alpha}e^{\alpha (R+x)}}{\Gamma(\alpha)}+\Theta_1(x,t,R),
	\end{split}
\end{equation}

\begin{equation}\label{estimation two}
	\begin{split}
		\Big(1-\frac{2}{e^{R+x}+1}\Big)^{2{\lfloor te^{R+x}\rfloor}+1}=e^{-4t}+\Theta_2(x,t,R),
	\end{split}
\end{equation}

\begin{equation}\label{estimation three}
	\begin{split}
\frac{1}{(1+e^{-(R+x)})^{2\alpha}}=1+\Theta_3(x,R).
	\end{split}
\end{equation}
By Lemma \ref{lem:Stirling} and Lemma \ref{lem:BoundExp}
(and by a standard fact for
\eqref{estimation three}),
there exists a constant $C>0$ such that, for $R$ large enough,

\begin{equation*}
	\begin{split}
		|\Theta_1(x,t,R)|\leq\left\{\begin{array}{ll}
			C,&(x,t)\in\supp f \times [0,e^{-R})\\
			Ct^{\alpha-1}e^{(\alpha-1)R},&(x,t)\in\supp f \times [e^{-R},+\infty)
		\end{array}\right.,
	\end{split}
\end{equation*}
\begin{equation*}
	\begin{split}
		|\Theta_2(x,t,R)|\leq C e^{-t/C}e^{-R}, \quad
		(x,t) \in \supp f \times [0,+\infty),
	\end{split}
\end{equation*}
\begin{equation*}
	\begin{split}
		|\Theta_3(x,R)|\leq Ce^{-R}, \quad 
		x \in \supp f.
	\end{split}
\end{equation*}

With the help of the estimates \eqref{estimation one}, \eqref{estimation two} and \eqref{estimation three},  it follows from \eqref{expected value one} that
\begin{equation*}
	\begin{split}
		&\quad\,e^{-R}\,\mathbb{E}\Big[\sum_{z\in\mathcal{H}_{\alpha}}f\big(|z|_{\mathrm{h}}-R\big)\Big]\\
		&=4^{\alpha}\int_{-R}^{+\infty}\int_{0}^{+\infty}f(x)e^x\frac{t^{\alpha}e^{\alpha (R+x)}}{\Gamma(\alpha)}e^{-4t}e^{-\alpha(R+x)}\mathrm{d}t\mathrm{d}x+O(e^{-R})\\
		&=\frac{4^{\alpha}}{\Gamma(\alpha)}\int_{0}^{+\infty}t^{\alpha}e^{-4t}\mathrm{d}t\int_{\mathbb{R}}f(x)e^x\mathrm{d}x+O(e^{-R})\\
		&=\frac{\alpha}{4}\int_{\mathbb{R}}f(x)e^{x}\mathrm{d}x+O(e^{-R}).
	\end{split}
\end{equation*}
That is,
\begin{equation*}
	\begin{split}
		\mathbb{E}\Big[\sum_{z\in\mathcal{H}_{\alpha}}f\big(|z|_{\mathrm{h}}-R\big)\Big]=2C_R^{(\alpha)}\int_{\mathbb{R}}f(x)e^{x}\mathrm{d}x+O(1).
	\end{split}
\end{equation*}

\subsection{Variance calculation for Theorem~\ref{Extreme Hyperbolic Normality}}\label{Variance calculation for Extreme Hyperbolic Normality}
We now turn to calculate the variance of the linear statistics $\sum_{z\in\mathcal{H}_{\alpha}}f\big(|z|_{\mathrm{h}}-R\big)$,
\begin{equation*}
	\begin{split}
		\Var\Big(\sum_{z\in\mathcal{H}_{\alpha}}f\big(|z|_{\mathrm{h}}-R\big)\Big)&=\sum_{n=0}^{\infty}\Var\Big(f\big(|\rho_n^{(\alpha)}|_{\mathrm{h}}-R\big)\Big)\\
		&=\sum_{n=0}^{\infty}\mathbb{E}\Big[f^2\big(|\rho_n^{(\alpha)}|_{\mathrm{h}}-R\big)\Big]-\sum_{n=0}^{\infty}\mathbb{E}\Big[f\big(|\rho_n^{(\alpha)}|_{\mathrm{h}}-R\big)\Big]^2.
	\end{split}
\end{equation*}
Due to the asymptotics in
Lemma \ref{lem:BoundExp} and
Lemma \ref{lem:Stirling} we can obtain a bit more that it is
needed.
For the first term $\sum_{n=0}^{\infty}\mathbb{E}\big[f^2\big(|\rho_n^{(\alpha)}|_{\mathrm{h}}-R\big)\big]$, by Subsection~\ref{Expected value calculation for Extreme Hyperbolic Normality}, we have
\begin{equation*}
	\begin{split}
		\sum_{n=0}^{\infty}\mathbb{E}\Big[f^2\big(|\rho_n^{(\alpha)}|_{\mathrm{h}}-R\big)\Big]=2C_R^{(\alpha)}\int_{\mathbb{R}}f^2(x)e^{x}\mathrm{d}x+O(1).
	\end{split}
\end{equation*}
As for the second term $\sum_{n=0}^{\infty}\mathbb{E}\big[f\big(|\rho_n^{(\alpha)}|_{\mathrm{h}}-R\big)\big]^2$, we can study it in the same way,
\begin{equation*}
	\begin{split}
		&\quad\,\sum_{n=0}^{\infty}\mathbb{E}\Big[f\big(|\rho_n^{(\alpha)}|_{\mathrm{h}}-R\big)\Big]^2\\
		&=4^{2\alpha}\sum_{n=0}^{\infty}\left(\int_{-R}^{+\infty}f(x)k_n^{(\alpha)}\Big(1-\frac{2}{e^{R+x}+1}\Big)^{2n+1}\frac{e^{-\alpha(R+x)}}{(1+e^{-(R+x)})^{2\alpha}}\mathrm{d}x\right)^2.
	\end{split}
\end{equation*}
Notice that the summation is outside the integration. Setting $n=\lfloor te^R\rfloor$, we have
\begin{equation}\label{expected value two}
	\begin{split}
		&\quad\,e^{-R}\,\sum_{n=0}^{\infty}\mathbb{E}\Big[f\big(|\rho_n^{(\alpha)}|_{\mathrm{h}}-R\big)\Big]^2\\
				&=4^{2\alpha}\sum_{n=0}^{\infty}\left(\int_{-R}^{+\infty}f(x)k_n^{(\alpha)}\Big(1-\frac{2}{e^{R+x}+1}\Big)^{2n+1}\frac{e^{-\alpha(R+x)}}{(1+e^{-(R+x)})^{2\alpha}}\mathrm{d}x\right)^2 e^{-R}\\
		&=4^{2\alpha}\int_{0}^{+\infty}\left(\int_{-R}^{+\infty}f(x)k_{\lfloor te^{R}\rfloor}^{(\alpha)}\Big(1-\frac{2}{e^{R+x}+1}\Big)^{2\lfloor te^{R}\rfloor+1}\frac{e^{-\alpha(R+x)}}{(1+
		e^{-(R+x)})^{2\alpha}}\mathrm{d}x\right)^2\mathrm{d}t.
	\end{split}
\end{equation}
We define the function $\Theta_4$
of $(t,R)$ and the function $\Theta_5$
of $(x,t,R)$ by
\begin{equation}\label{estimation four}
	\begin{split}
		k_{\lfloor te^{R}\rfloor}^{(\alpha)}=\frac{t^{\alpha}e^{\alpha R}}{\Gamma(\alpha)}+\Theta_4(t,R),
	\end{split}
\end{equation}

\begin{equation}\label{estimation five}
	\begin{split}
		\Big(1-\frac{2}{e^{R+x}+1}\Big)^{2{\lfloor te^{R}\rfloor}+1}=e^{-4te^{-x}}+\Theta_5(x,t,R),
	\end{split}
\end{equation}
so that, by Lemma \ref{lem:Stirling}
and Lemma \ref{lem:BoundExp}, there exists
a constant $C>0$ 
 such that, for $R$ large enough, we have the bounds
\begin{equation*}
	\begin{split}
		|\Theta_4(t,R)|\leq\left\{\begin{array}{ll}
			C,&t\in[0,e^{-R})\\
			C t^{\alpha-1}e^{(\alpha-1)R},&t\in[e^{-R},+\infty)
		\end{array}\right.,
	\end{split}
\end{equation*}
\begin{equation*}
	\begin{split}
		|\Theta_5(x,t,R)|\leq C e^{-t/C}e^{-R}
		\quad \mbox{ whenever } (x,t)\in\supp f \times [0,+\infty).
	\end{split}
\end{equation*}
With the help of these estimates
and recalling the standard estimate \eqref{estimation three}, it follows  that
\begin{equation*}
	\begin{split}
		&\quad\,e^{-R}\, \sum_{n=0}^{\infty}\mathbb{E}\Big[f\big(|\rho_n^{(\alpha)}|_{\mathrm{h}}-R\big)\Big]^2\\
		&=4^{2\alpha}\int_{0}^{+\infty}\left(\int_{-R}^{+\infty}f(x)\frac{t^{\alpha}e^{\alpha R}}{\Gamma(\alpha)}e^{-4te^{-x}}
		e^{-\alpha(R+x)}\mathrm{d}x\right)^2\mathrm{d}t+O(e^{-R})\\
		&=\frac{4^{2\alpha}}{\Gamma^2(\alpha)}\int_{\mathbb{R}^2}f(x)f(y)e^{-\alpha (x+y)}\int_{0}^{+\infty}t^{2\alpha}e^{-4t(e^{-x}+e^{-y})}\mathrm{d}t\mathrm{d}x\mathrm{d}y+O(e^{-R})\\
		&=\frac{1}{4\Gamma^2(\alpha)}\int_{\mathbb{R}^2}f(x)f(y)\frac{e^{-\alpha (x+y)}}{(e^{-x}+e^{-y})^{2\alpha+1}}\int_{0}^{+\infty}u^{2\alpha}e^{-u}\mathrm{d}u\mathrm{d}x\mathrm{d}y+O(e^{-R})\\
		&=\frac{\alpha}{4B(\alpha,\alpha+1)}\int_{\mathbb{R}^2}f(x)f(y)\frac{e^{(\alpha+1)(x+y)}}{\left(e^{x}+e^{y}\right)^{2\alpha+1}}\mathrm{d}x\mathrm{d}y+O(e^{-R}),
	\end{split}
\end{equation*}
that is,
\begin{equation*}
	\begin{split}
		\sum_{n=0}^{\infty}\mathbb{E}\Big[f\big(|\rho_n^{(\alpha)}|_{\mathrm{h}}-R\big)\Big]^2=\frac{2C_R^{(\alpha)}}{B(\alpha,\alpha+1)}\int_{\mathbb{R}^2}f(x)f(y)\frac{e^{(\alpha+1)(x+y)}}{\left(e^{x}+e^{y}\right)^{2\alpha+1}}\mathrm{d}x\mathrm{d}y+O(1).
	\end{split}
\end{equation*}
Therefore, we obtain
\begin{equation*}
	\begin{split}
		&\quad\,\Var\Big(\sum_{z\in\mathcal{H}_{\alpha}}f\big(|z|_{\mathrm{h}}-R\big)\Big)\\
		&=2C_R^{(\alpha)}\int_{\mathbb{R}}f^2(x)e^{x}\mathrm{d}x-\frac{2C_R^{(\alpha)}}{B(\alpha,\alpha+1)}\int_{\mathbb{R}^2}f(x)f(y)\frac{e^{(\alpha+1)(x+y)}}{\left(e^{x}+e^{y}\right)^{2\alpha+1}}\mathrm{d}x\mathrm{d}y+O(1).
	\end{split}
\end{equation*}
By noticing that
\begin{equation*}
	\begin{split}
		\frac{1}{B(\alpha,\alpha+1)}\int_{\mathbb{R}}\frac{e^{(\alpha+1)(x+y)}}{\left(e^{x}+e^{y}\right)^{2\alpha+1}}\mathrm{d}y=e^x,
	\end{split}
\end{equation*}
we may conclude that
\begin{equation*}
	\begin{split}
		\Var\Big(\sum_{z\in\mathcal{H}_{\alpha}}f\big(|z|_{\mathrm{h}}-R\big)\Big)=\frac{C_R^{(\alpha)}}{B(\alpha,\alpha+1)}\int_{\mathbb{R}^2}\big[f(x)-f(y)\big]^2\frac{e^{(\alpha+1)(x+y)}}{\left(e^{x}+e^{y}\right)^{2\alpha+1}}\mathrm{d}x\mathrm{d}y+O(1).
	\end{split}
\end{equation*}

\subsection{Soshnikov's conditions and conclusion of the proof}
We will use the following central limit theorem, derived by Soshnikov \cite[Theorem 1]{Sos}, to prove Theorem~\ref{Extreme Hyperbolic Normality}.

\begin{theorem}[The central limit theorem of Soshnikov]\label{Soshnikov}
	For $L>0$, suppose that $\mathscr{X}_L$ is a determinantal point process on a locally compact Polish space $X_L$ with Hermitian kernel $K_L$ with respect to the reference Radon measure $\mu_L$. By slightly abusing the notation, also denote $K_L$ the associated integral operator with integral kernel $K_L$ on $L^2(X_L,\mu_L)$, that is, $K_L:L^2(X_L,\mu_L)\to L^2(X_L,\mu_L)$ is defined by 
	\begin{equation*}
		\begin{split}
			K_Lf(x)=\int_{X_L}K_L(x,y)f(y)d\mu_L(y),\quad f\in L^2(X_L,\mu_L),
		\end{split}
	\end{equation*}
	and suppose that for any pre-compact Borel set $\Delta\subset X_L$, the operator $K_L\, \chi_{\Delta}$ is trace-class, where $\chi_{\Delta}$ denotes the multiplication operator by the indicator of $\Delta$.
	
	Let $f_L$ be a real-valued bounded measurable function on $X_L$ with compact support and consider the linear statistics $S_{f_L}=\sum_{x\in\mathscr{X}_L}f_L(x)$. If 
\begin{itemize}	
\item	$\Var S_{f_L}\to +\infty$ as $L\to +\infty$, 
\item $\sup_{x\in X_L}|f_L(x)|=o((\Var{f_L})^\varepsilon)$ as $L\to +\infty$ for any $\varepsilon>0$ and 
\item $\mathbb{E}S_{|f_L|}=O((\Var S_{f_L})^\delta)$ as $L\to +\infty$ for some $\delta>0$,
\end{itemize} 
then the centered normalized linear statistics converges in law to
the standard normal distribution, i.e.,
	\begin{equation*}
		\begin{split}
			\frac{S_{f_L}-\mathbb{E}S_{f_L}}{\sqrt{\Var S_{f_L}}}
			\xrightarrow[L \to \infty]{\mathrm{law}} \mathcal N(0,1)
		\end{split}
	\end{equation*} 
\end{theorem}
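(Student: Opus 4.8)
The plan is to prove the statement by the method of cumulants. After centering and normalizing, the first cumulant of $\tilde S_L := (S_{f_L}-\mathbb E S_{f_L})/\sqrt{\Var S_{f_L}}$ is $0$ and the second is $1$ by construction, so it suffices to show that every cumulant $C_m(\tilde S_L) = C_m(S_{f_L})/(\Var S_{f_L})^{m/2}$ of order $m\ge 3$ tends to $0$. Since convergence of all cumulants is equivalent to convergence of all moments, and the law $\mathcal N(0,1)$ is determined by its moments (Carleman's criterion), this yields convergence in law. All these cumulants are finite: because $f_L$ is bounded with compact support $\Delta$ and $K_L\chi_\Delta$ is trace-class, the count $\mathscr X_L(\Delta)$ has moments of every order, hence so does $S_{f_L}$.

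The main tool is the generating-function identity for determinantal processes. Writing $g=e^{zf_L}-1$, a bounded compactly supported function, the operator $gK_L=g\chi_\Delta K_L$ is trace-class, and
\[
\log\mathbb E\big[e^{zS_{f_L}}\big]=\log\det(1+gK_L)=\Tr\log(1+gK_L)=\sum_{l\ge1}\frac{(-1)^{l+1}}{l}\Tr\big[(gK_L)^l\big]
\]
for $z$ near $0$. Expanding $g=\sum_{k\ge1}z^kf_L^k/k!$ and extracting the coefficient of $z^m/m!$ expresses $C_m(S_{f_L})$ as a finite combinatorial sum of traces $\Tr[f_L^{m_1}K_Lf_L^{m_2}K_L\cdots f_L^{m_l}K_L]$ with $m_1+\cdots+m_l=m$ and $1\le l\le m$.

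Next I would bound these traces. Since the process is determinantal one has $0\le K_L\le1$, so the operator $A_L:=|f_L|^{1/2}K_L|f_L|^{1/2}$ is positive and trace-class with $\|A_L\|\le\|f_L\|_\infty$ and $\Tr A_L=\int|f_L|K_L(x,x)\,\mathrm d\mu_L=\mathbb E S_{|f_L|}$ by \eqref{Expected value}. A Hölder inequality for Schatten norms reduces each trace to $\|f_L\|_\infty^{m-l}\Tr A_L^l$, and the spectral bound $\Tr A_L^l\le\|f_L\|_\infty^{l-1}\Tr A_L$ then yields an estimate of the form $|C_m(S_{f_L})|\le c_m\|f_L\|_\infty^{m-1}\mathbb E S_{|f_L|}$. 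Dividing by $(\Var S_{f_L})^{m/2}$ and invoking the three hypotheses, the normalized cumulant of order $m$ is $o\big((\Var S_{f_L})^{(m-1)\varepsilon+\delta-m/2}\big)$; choosing $\varepsilon$ small, this tends to $0$ for every $m\ge3$ in the regime where $\mathbb E S_{|f_L|}$ and $\Var S_{f_L}$ are comparable, which is the case in all applications of this note.

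The delicate point is the cumulant estimate. The crude term-by-term bound above produces the factor $\mathbb E S_{|f_L|}$, and to obtain the statement without any restriction on $\delta$ one must exploit the algebraic cancellations inside the cumulant combination, the same cancellation that turns $C_2=\Tr[f_L^2K_L]-\Tr[f_LK_Lf_LK_L]$ into the manifestly nonnegative variance $\tfrac12\int\int[f_L(x)-f_L(y)]^2|K_L(x,y)|^2\,\mathrm d\mu_L\,\mathrm d\mu_L$. Carrying this cancellation through all orders so that the leading contribution is governed by $\|f_L\|_\infty^{m-2}\Var S_{f_L}$, while keeping track of the factorially growing combinatorial coefficients $c_m$ (harmless, since each normalized cumulant is handled separately), is the principal obstacle; the remaining steps are routine Schatten-norm bookkeeping.
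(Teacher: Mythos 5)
The paper itself contains no proof of Theorem~\ref{Soshnikov}: it is quoted from Soshnikov \cite{Sos}, so your attempt has to be measured against Soshnikov's original argument. Its first half is exactly what you propose, and that part of your sketch is sound: the identity $\mathbb{E}[e^{zS_{f_L}}]=\det\big(1+(e^{zf_L}-1)K_L\big)$, the expansion of $C_m(S_{f_L})$ into traces $\Tr[f_L^{m_1}K_L\cdots f_L^{m_l}K_L]$, and the Schatten--H\"older bound $|C_m(S_{f_L})|\le c_m\|f_L\|_\infty^{m-1}\,\mathbb{E}S_{|f_L|}$ are all correct. But after normalization this crude bound gives $o\big((\Var S_{f_L})^{(m-1)\varepsilon+\delta-m/2}\big)$, which tends to zero for \emph{all} $m\ge3$ only when $\delta<3/2$ (the binding case is $m=3$). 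So your argument, as it stands, proves the theorem only under the extra restriction $\delta<3/2$ --- enough for the applications in this paper, where $\mathbb{E}S_{|f_L|}\asymp\Var S_{f_L}$, as you observe, but not for the statement itself, which allows an arbitrary $\delta>0$.

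The gap is therefore real, and moreover your proposed way of closing it is not the one that works. First, the order-two cancellation you want to propagate, $C_2=\frac12\iint[f(x)-f(y)]^2|K_L(x,y)|^2\,\mathrm{d}\mu_L\mathrm{d}\mu_L$, holds only for projection kernels; for a general Hermitian kernel with $0\le K_L\le1$ the variance carries the extra nonnegative term $\Tr[f_L^2(K_L-K_L^2)]$, and it is precisely this non-projection part that keeps producing quantities controllable only through $\mathbb{E}S_{|f_L|}$. Second, carrying out the cancellation (the combinatorial coefficients sum to zero, so each trace may be replaced by its difference with $\Tr[f_L^mK_L]$ and telescoped through commutators $[K_L,f_L^{b}]$) does \emph{not} yield your clean bound $\|f_L\|_\infty^{m-2}\Var S_{f_L}$: the commutator factors are Hilbert--Schmidt of size $O(\|f_L\|_\infty^{b-1}\sqrt{\Var S_{f_L}})$, but the complementary factors can only be estimated via $\|f_L^{a}K_L\|_{\mathrm{HS}}^2\le\|f_L\|_\infty^{2a-1}\mathbb{E}S_{|f_L|}$, so one lands on a mixed bound of the shape $c_m\big(\|f_L\|_\infty^{m-2}\Var S_{f_L}+\|f_L\|_\infty^{m-3/2}\sqrt{\Var S_{f_L}\cdot\mathbb{E}S_{|f_L|}}\big)$, which after normalization kills only the cumulants of order $m>\delta+1$. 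The missing idea --- the actual crux of Soshnikov's proof, absent from your proposal --- is the endgame for the finitely many remaining orders $3\le m\le\delta+1$: one checks via moment inequalities that these normalized cumulants stay bounded, passes to a subsequential limit of the moment sequences, and observes that the limit law has all cumulants of order $>\delta+1$ equal to zero, hence a characteristic function of the form $\exp(P)$ with $P$ a polynomial; the classical theorem of Marcinkiewicz then forces $\deg P\le2$, so every subsequential limit is $\mathcal{N}(0,1)$. Without this step (or a proof of your clean $\|f_L\|_\infty^{m-2}\Var S_{f_L}$ bound, which the telescoping does not give and which you explicitly leave as ``the principal obstacle''), the theorem in its stated generality remains unproved.
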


When $R\to+\infty$, with the help of Subsection~\ref{Expected value calculation for Extreme Hyperbolic Normality} and Subsection~\ref{Variance calculation for Extreme Hyperbolic Normality}, we have
\begin{equation*}
	\begin{split}
		\mathbb{E}\Big[\sum_{z\in\mathcal{H}_{\alpha}}\big|f\big(|z|_{\mathrm{h}}-R\big)\big|\Big]\sim2C_R^{(\alpha)}\int_{\mathbb{R}}|f(x)|e^{x}\mathrm{d}x,
		\quad \mbox{ and}
	\end{split}
\end{equation*}
\begin{equation*}
	\begin{split}
		\Var\Big(\sum_{z\in\mathcal{H}_{\alpha}}f\big(|z|_{\mathrm{h}}-R\big)\Big)\sim C_R^{(\alpha)}V_f^{(\alpha)}.
	\end{split}
\end{equation*}
Applying Soshnikov's Theorem~\ref{Soshnikov} to $\sum_{z\in\mathcal{H}_{\alpha}}f\big(|z|_{\mathrm{h}}-R\big)$, we get
\begin{equation*}
	\begin{split}
		\frac{\sum_{z\in\mathcal{H}_{\alpha}}f\big(|z|_{\mathrm{h}}-R\big)-\mathbb{E}\big[\sum_{z\in\mathcal{H}_{\alpha}}f\big(|z|_{\mathrm{h}}-R\big)\big]}{\sqrt{\Var\big(\sum_{z\in\mathcal{H}_{\alpha}}f\big(|z|_{\mathrm{h}}-R\big)\big) }}\xrightarrow[R\to+\infty]{\mathrm{law}}\mathcal{N}(0,1).
	\end{split}
\end{equation*}
This gives that
\begin{equation*}
	\begin{split}
		\frac{1}{\sqrt{C_R^{(\alpha)}}}\sum_{z\in\mathcal{H}_{\alpha}}f\big(|z|_{\mathrm{h}}-R\big)-\frac{1}{\sqrt{C_R^{(\alpha)}}}\mathbb{E}\Big[\sum_{z\in\mathcal{H}_{\alpha}}f\big(|z|_{\mathrm{h}}-R\big)\Big]\xrightarrow[R\to+\infty]{\mathrm{law}}\mathcal{N}\big(0,V_f^{(\alpha)}\big).
	\end{split}
\end{equation*}
Moreover, it follows from Subsection~\ref{Expected value calculation for Extreme Hyperbolic Normality} that
\begin{equation*}
	\begin{split}
		\frac{1}{\sqrt{C_R^{(\alpha)}}}\mathbb{E}\Big[\sum_{z\in\mathcal{H}_{\alpha}}f\big(|z|_{\mathrm{h}}-R\big)\Big]-2\sqrt{C_R^{(\alpha)}}\int_{\mathbb{R}}f(x)e^{x}\mathrm{d}x\xrightarrow[R\to+\infty]{}0,
	\end{split}
\end{equation*}
hence
\begin{equation*}
	\begin{split}
		\frac{1}{\sqrt{C_R^{(\alpha)}}}\sum_{z\in\mathcal{H}_{\alpha}}f\big(|z|_{\mathrm{h}}-R\big)-2\sqrt{C_R^{(\alpha)}}\int_{\mathbb{R}}f(x)e^{x}\mathrm{d}x\xrightarrow[R\to+\infty]{\mathrm{law}}\mathcal{N}\big(0,V_f^{(\alpha)}\big).
	\end{split}
\end{equation*}
This completes the proof of Theorem~\ref{Extreme Hyperbolic Normality}.

\section{The Ginibre process proof of Theorem~\ref{Extreme Ginibre Normality}}
The proof of Theorem \ref{Extreme Ginibre Normality} follows the same ideas as the proof of Theorem \ref{Extreme Hyperbolic Normality}. 
Moreover, the calculation of the variance can be nicely done
using the same
Riemann sum's argument as before.
Nevertheless, 
we have decided to use 
formulas \eqref{Expected value} and \eqref{Variance}
to emphasize a slightly different way.

\subsection{Expected value calculation for Theorem~\ref{Extreme Ginibre Normality}}\label{Expected value calculation for Extreme Ginibre Normality}
For the expectation of the random variable $\sum_{z\in\mathcal{G}}f\big(|z|-R\big)$,
notice that, since $
		K_{\mathcal{G}}(z,z)=1/\pi$,
we have
\begin{equation*}
	\begin{split}
		\mathbb{E}\Big[\sum_{z\in\mathcal{G}}f\big(|z|-R\big)\Big]&=\int_{\mathbb{C}}f\big(|z|-R\big)K_{\mathcal{G}}(z,z)\mathrm{d}m(z)\\
		&=\frac{1}{\pi}\int_{\mathbb{C}}f\big(|z|-R\big)\mathrm{d}m(z).
	\end{split}
\end{equation*}

By using polar coordinates and making a variable substitution, we get
\begin{equation*}
	\begin{split}
		\mathbb{E}\Big[\sum_{z\in\mathcal{G}}f\big(|z|-R\big)\Big]&=2\int_{0}^{+\infty}f(r-R)r\mathrm{d}r\\
		&=2\int_{-R}^{+\infty}f(x)\mathrm{d}x\cdot R+2\int_{-R}^{+\infty}xf(x)\mathrm{d}x,
	\end{split}
\end{equation*}
that is,
\begin{equation*}
	\begin{split}
		\mathbb{E}\Big[\sum_{z\in\mathcal{G}}f\big(|z|-R\big)\Big]=2R\int_{\mathbb{R}}f(x)\mathrm{d}x+O(1).
	\end{split}
\end{equation*}

\subsection{Variance calculation for Theorem~\ref{Extreme Ginibre Normality}}\label{Variance calculation for Extreme Ginibre Normality}
For the variance of the random variable $\sum_{z\in\mathcal{G}}f\big(|z|-R\big)$,
we use that
\begin{equation*}
	\begin{split}
		\big|K_{\mathcal{G}}(z,w)\big|^2=\frac{1}{\pi^2}e^{-|z-w|^2},
	\end{split}
\end{equation*}
to obtain
\begin{equation*}
	\begin{split}
		\Var\Big(\sum_{z\in\mathcal{G}}f\big(|z|-R\big)\Big)&=\frac{1}{2}\int_{\mathbb{C}^2}\big[f\big(|z|-R\big)-f\big(|w|-R\big)\big]^2\big|K_{\mathcal{G}}(z,w)\big|^2\mathrm{d}m(z)\mathrm{d}m(w)\\
		&=\frac{1}{2\pi^2}\int_{\mathbb{C}^2}\big[f\big(|z|-R\big)-f\big(|w|-R\big)\big]^2e^{-|z-w|^2}\mathrm{d}m(z)\mathrm{d}m(w).
	\end{split}
\end{equation*}
By using polar coordinates, we get
\begin{equation*}
	\begin{split}
		&\quad\,\Var\Big(\sum_{z\in\mathcal{G}}f\big(|z|-R\big)\Big)\\
		&=\frac{1}{2\pi^2}\int_{0}^{+\infty}\int_{0}^{+\infty}\int_{-\pi}^{\pi}\int_{-\pi}^{\pi}\big[f(r-R)-f(\rho-R)\big]^2e^{-|re^{i\theta}-\rho e^{i\varphi}|^2}r\rho\mathrm{d}\theta\mathrm{d}\varphi\mathrm{d}r\mathrm{d}\rho\\
		&=\frac{1}{\pi}\int_{0}^{+\infty}\int_{0}^{+\infty}\int_{-\pi}^{\pi}\big[f(r-R)-f(\rho-R)\big]^2e^{-|re^{i\theta}-\rho|^2}r\rho\mathrm{d}\theta\mathrm{d}\varphi\mathrm{d}r\\
		&=\frac{2}{\pi}\int_{0}^{+\infty}\int_{0}^{+\infty}\big[f(r-R)-f(\rho-R)\big]^2e^{-r^2-\rho^2}r\rho\int_{0}^{\pi}e^{2r\rho\cos\theta}\mathrm{d}\theta\mathrm{d}r\mathrm{d}\rho.
	\end{split}
\end{equation*}
By the change of variables $x=r-R$ and $y=\rho-R$, we have
\begin{equation*}
	\begin{split}
		&\quad\,\Var\Big(\sum_{z\in\mathcal{G}}f\big(|z|-R\big)\Big)\\
		&=\frac{2}{\pi}\int_{-R}^{+\infty}\int_{-R}^{+\infty}\big[f(x)-f(y)\big]^2e^{-(R+x)^2-(R+y)^2}(R+x)(R+y)\int_{0}^{\pi}e^{2(R+x)(R+y)\cos\theta}\mathrm{d}\theta\mathrm{d}x\mathrm{d}y\\
		&=\frac{2}{\pi}\int_{-R}^{+\infty}\int_{-R}^{+\infty}\big[f(x)-f(y)\big]^2e^{-(x-y)^2}(R+x)(R+y)\int_{0}^{\pi}e^{-4(R+x)(R+y)\sin^2\frac{\theta}{2}}\mathrm{d}\theta\mathrm{d}x\mathrm{d}y.
	\end{split}
\end{equation*}
Let $\theta=t/\sqrt{(R+x)(R+y)}$, we obtain
\begin{equation*}
	\begin{split}
		\Var\Big(\sum_{z\in\mathcal{G}}f\big(|z|-R\big)\Big)&=\frac{2}{\pi}\int_{-R}^{+\infty}\int_{-R}^{+\infty}\big[f(x)-f(y)\big]^2e^{-(x-y)^2}\sqrt{(R+x)(R+y)}\\
		&\qquad\qquad\qquad\cdot\int_{0}^{\pi\sqrt{(R+x)(R+y)}}e^{-4(R+x)(R+y)\sin^2\frac{t}{2\sqrt{(R+x)(R+y)}}}\mathrm{d}t\mathrm{d}x\mathrm{d}y.
	\end{split}
\end{equation*}
Consider the integral
\begin{equation*}
	\begin{split}
		&\int_{\mathbb{R}^3}\big[f(x)-f(y)\big]^2e^{-(x-y)^2}\sqrt{(1+x/R)(1+y/R)}\,e^{-4(R+x)(R+y)\sin^2\frac{t}{2\sqrt{(R+x)(R+y)}}}\\
		&\qquad\qquad\qquad\qquad\qquad\,\,\cdot\chi_{\left[0,\pi\sqrt{(R+x)(R+y)}\right]}(t)\chi_{[-R,+\infty)}(x)\chi_{[-R,+\infty)}(y)\mathrm{d}t\mathrm{d}x\mathrm{d}y.
	\end{split}
\end{equation*}
When $R$ is sufficiently large, the integrand is dominated by
\begin{equation*}
	\begin{split}
		\big[f(x)-f(y)\big]^2e^{-(x-y)^2}\sqrt{(1+|x|)(1+|y|)}\,e^{-\frac{4}{\pi^2}t^2}\chi_{[0,+\infty]}(t),
	\end{split}
\end{equation*}
hence by the dominated convergence theorem, as $R\to+\infty$, the integral converges to
\begin{equation*}
	\begin{split}
		\frac{\sqrt{\pi}}{2}\int_{\mathbb{R}^2}\big[f(x)-f(y)\big]^2e^{-(x-y)^2}\mathrm{d}x\mathrm{d}y.
	\end{split}
\end{equation*}
It follows that, when $R\to+\infty$,
\begin{equation*}
	\begin{split}
		\Var\Big(\sum_{z\in\mathcal{G}}f\big(|z|-R\big)\Big)\sim\frac{R}{\sqrt{\pi}}\int_{\mathbb{R}^2}\big[f(x)-f(y)\big]^2e^{-(x-y)^2}\mathrm{d}x\mathrm{d}y.
	\end{split}
\end{equation*}

\subsection{Soshnikov's conditions and conclusion of the proof}
When $R\to+\infty$, with the help of Subsection~\ref{Expected value calculation for Extreme Ginibre Normality} and Subsection~\ref{Variance calculation for Extreme Ginibre Normality}, we have
\begin{equation*}
	\begin{split}
		\mathbb{E}\Big[\sum_{z\in\mathcal{G}}\big|f\big(|z|-R\big)\big|\Big]\sim2R\int_{\mathbb{R}}|f(x)|\mathrm{d}x,\quad \mbox{ and }
	\end{split}
\end{equation*}
\begin{equation*}
	\begin{split}
		\Var\Big(\sum_{z\in\mathcal{G}}f\big(|z|-R\big)\Big)\sim RV_f.
	\end{split}
\end{equation*}
Applying Soshnikov's Theorem~\ref{Soshnikov} to $\sum_{z\in\mathcal{G}}f\big(|z|-R\big)$, we get
\begin{equation*}
	\begin{split}
		\frac{\sum_{z\in\mathcal{G}}f\big(|z|-R\big)-\mathbb{E}\big[\sum_{z\in\mathcal{G}}f\big(|z|-R\big)\big]}{\sqrt{\Var\big(\sum_{z\in\mathcal{G}}f\big(|z|-R\big)\big)}}\xrightarrow[R\to+\infty]{\mathrm{law}}\mathcal{N}(0,1).
	\end{split}
\end{equation*}
This gives that
\begin{equation*}
	\begin{split}
		\frac{1}{\sqrt{R}}\sum_{z\in\mathcal{G}}f\big(|z|-R\big)-\frac{1}{\sqrt{R}}\mathbb{E}\Big[\sum_{z\in\mathcal{G}}f\big(|z|-R\big)\Big]\xrightarrow[R\to+\infty]{\mathrm{law}}\mathcal{N}(0,V_f).
	\end{split}
\end{equation*}
Moreover, it follows from Subsection~\ref{Expected value calculation for Extreme Ginibre Normality} that
\begin{equation*}
	\begin{split}
		\frac{1}{\sqrt{R}}\mathbb{E}\Big[\sum_{z\in\mathcal{G}}f\big(|z|-R\big)\Big]-2\sqrt{R}\int_{\mathbb{R}}f(x)e^{x}\mathrm{d}x\xrightarrow[R\to+\infty]{}0,
	\end{split}
\end{equation*}
hence
\begin{equation*}
	\begin{split}
		\frac{1}{\sqrt{R}}\sum_{z\in\mathcal{G}}f\big(|z|-R\big)-2\sqrt{R}\int_{\mathbb{R}}f(x)e^{x}\mathrm{d}x\xrightarrow[R\to+\infty]{\mathrm{law}}\mathcal{N}(0,V_f).
	\end{split}
\end{equation*}
This completes the proof of Theorem~\ref{Extreme Ginibre Normality}.

\section{Proof of Theorem~\ref{White Noise Hyperbolic} (hyperbolic case)}\label{Proof of White Noise Hyperbolic}
In the case $1\ll a_R\ll e^R$ as $R\to+\infty$, the proof of Theorem~\ref{White Noise Hyperbolic} follows the same steps as the proof of Theorem~\ref{Extreme Hyperbolic Normality}: first we need to understand the expected value, then to calculate the limiting variance, and finally to use Soshnikov's Theorem~\ref{Soshnikov}.

Let us begin by writing
\begin{equation*}
	\begin{split}
		&\quad\,\mathbb{E}\Big[\sum_{z\in\mathcal{H}_{\alpha}}f\big(a_R(|z|_{\mathrm{h}}-R)\big)\Big]\\
		&=4^{\alpha}\int_{-R}^{+\infty}\sum_{n=0}^{\infty}f(a_R y)k_n^{(\alpha)}\Big(1-\frac{2}{e^{R+y}+1}\Big)^{2n+1}\frac{e^{-\alpha(R+y)}}{(1+e^{-(R+y)})^{2\alpha}}\mathrm{d}y\\
		&=4^{\alpha}\int_{-Ra_R}^{+\infty}\sum_{n=0}^{\infty}f(x)k_n^{(\alpha)}\Big(1-\frac{2}{e^{R+\frac{x}{a_R}}+1}\Big)^{2n+1}\frac{e^{-\alpha(R+\frac{x}{a_R})}}{\big(1+e^{-(R+\frac{x}{a_R})}\big)^{2\alpha}}
		\frac{\mathrm{d}x}{a_R}.
	\end{split}
\end{equation*}
By taking $n=\lfloor te^{R+x/a_R}\rfloor$, we get
\begin{equation*}
	\begin{split}
		&\quad\,\frac{a_R}{e^R}\,\mathbb{E}\Big[\sum_{z\in\mathcal{H}_{\alpha}}f\big(a_R(|z|_{\mathrm{h}}-R)\big)\Big]\\
		&=4^{\alpha}\int_{-Ra_R}^{+\infty}
		\left(\sum_{n=0}^{\infty}f(x)
		e^{\frac{x}{a_R}} k_n^{(\alpha)}\Big(1-\frac{2}{e^{R+\frac{x}{a_R}}+1}\Big)^{2n+1}\frac{e^{-\alpha(R+\frac{x}{a_R})}}{\big(1+e^{-(R+\frac{x}{a_R})}\big)^{2\alpha}}
		 (		e^{-(R+\frac{x}{a_R})}) \right)
		\mathrm{d}x\\
		&=4^{\alpha}\int_{-Ra_R}^{+\infty}
		\left(\int_{0}^{+\infty}f(x)e^{\frac{x}{a_R}}k_{\lfloor te^{R+\frac{x}{a_R}}\rfloor}^{(\alpha)}\Big(1-\frac{2}{e^{R+\frac{x}{a_R}}+1}\Big)^{2{\lfloor te^{R+\frac{x}{a_R}}\rfloor}+1}\frac{e^{-\alpha(R+\frac{x}{a_R})}}{(1+e^{-(R+\frac{x}{a_R})})^{2\alpha}}\mathrm{d}t
		\right)\mathrm{d}x.
	\end{split}
\end{equation*}
This can be done in the same way as in Subsection~\ref{Expected value calculation for Extreme Hyperbolic Normality}. Notice that $x\in\supp f$ and $1\ll a_R\ll e^R$ as $R\to+\infty$. We can show that for sufficiently large $R$,
\begin{equation*}
	\begin{split}
		&\quad\,\frac{a_R}{e^R}\mathbb{E}\Big[\sum_{z\in\mathcal{H}_{\alpha}}f\big(a_R(|z|_{\mathrm{h}}-R)\big)\Big]\\
		&=4^{\alpha}\int_{-Ra_R}^{+\infty}\int_{0}^{+\infty}f(x)\frac{t^{\alpha}e^{\alpha (R+\frac{x}{a_R})}}{\Gamma(\alpha)}e^{-4t}e^{-\alpha(R+\frac{x}{a_R})}
		\mathrm{d}t\mathrm{d}x+O(e^{-R})\\
		&=\frac{4^{\alpha}}{\Gamma(\alpha)}\int_{0}^{+\infty}t^{\alpha}e^{-4t}\mathrm{d}t\int_{\mathbb{R}}f(x)\mathrm{d}x+O(e^{-R})\\
		&=\frac{\alpha}{4}\int_{\mathbb{R}}f(x)\mathrm{d}x+O(e^{-R}),
	\end{split}
\end{equation*}
that is,
\begin{equation*}
	\begin{split}
		\mathbb{E}\Big[\sum_{z\in\mathcal{H}_{\alpha}}f\big(a_R(|z|_{\mathrm{h}}-R)\big)\Big]=\frac{2C_R^{(\alpha)}}{a_R}\int_{\mathbb{R}}f(x)\mathrm{d}x+O\Big(\frac{1}{a_R}\Big).
	\end{split}
\end{equation*}

As for the variance
\begin{equation*}
	\begin{split}
		\Var\Big(\sum_{z\in\mathcal{H}_{\alpha}}f\big(a_R(|z|_{\mathrm{h}}-R)\big)\Big)=\sum_{n=0}^{\infty}\mathbb{E}\Big[f^2\big(a_R(|\rho_n^{(\alpha)}|_{\mathrm{h}}-R)\big)\Big]-\sum_{n=0}^{\infty}\mathbb{E}\Big[f\big(a_R(|\rho_n^{(\alpha)}|_{\mathrm{h}}-R)\big)\Big]^2,
	\end{split}
\end{equation*}
the first term satisfies
\begin{equation*}
	\begin{split}
		\sum_{n=0}^{\infty}\mathbb{E}\Big[f^2\big(a_R(|\rho_n^{(\alpha)}|_{\mathrm{h}}-R)\big)\Big]=\frac{2C_R^{(\alpha)}}{a_R}\int_{\mathbb{R}}f^2(x)\mathrm{d}x+O\Big(\frac{1}{a_R}\Big).
	\end{split}
\end{equation*}
The second term can be dealt with in the same way as in Subsection~\ref{Variance calculation for Extreme Hyperbolic Normality},
\begin{equation*}
	\begin{split}
	&\quad\,\sum_{n=0}^{\infty}\mathbb{E}\Big[f\big(a_R(|\rho_n^{(\alpha)}|_{\mathrm{h}}-R)\big)\Big]^2\\
	&=4^{2\alpha}\sum_{n=0}^{\infty}\left(\int_{-R}^{+\infty}f(a_Rx)k_n^{(\alpha)}\Big(1-\frac{2}{e^{R+x}+1}\Big)^{2n+1}\frac{e^{\alpha(R+x)}}{(e^{R+x}+1)^{2\alpha}}\mathrm{d}x\right)^2\\
	&=4^{2\alpha}\sum_{n=0}^{\infty}\left(\int_{-Ra_R}^{+\infty}f(x)k_n^{(\alpha)}\Big(1-\frac{2}{e^{R+\frac{x}{a_R}}+1}\Big)^{2n+1}\frac{e^{\alpha(R+\frac{x}{a_R})}}{(e^{R+\frac{x}{a_R}}+1)^{2\alpha}}\mathrm{d}x\right)^2\cdot\frac{1}{a_R^2},
\end{split}
\end{equation*}
set $n=\lfloor te^R\rfloor$, then
\begin{equation*}
	\begin{split}
		&\quad\,\sum_{n=0}^{\infty}\mathbb{E}\Big[f\big(a_R(|\rho_n^{(\alpha)}|_{\mathrm{h}}-R)\big)\Big]^2\\
		&=4^{2\alpha}\int_{0}^{+\infty}\left(\int_{-Ra_R}^{+\infty}f(x)k_{\lfloor te^{R}\rfloor}^{(\alpha)}\Big(1-\frac{2}{e^{R+\frac{x}{a_R}}+1}\Big)^{2\lfloor te^{R}\rfloor+1}\frac{e^{\alpha(R+\frac{x}{a_R})}}{(e^{R+\frac{x}{a_R}}+1)^{2\alpha}}\mathrm{d}x\right)^2\mathrm{d}t\cdot\frac{e^R}{a_R^2}.
	\end{split}
\end{equation*}
Hence when $R\to+\infty$,
\begin{equation*}
	\begin{split}
		\sum_{n=0}^{\infty}\mathbb{E}\Big[f\big(a_R(|\rho_n^{(\alpha)}|_{\mathrm{h}}-R)\big)\Big]^2&\sim4^{2\alpha}\int_{0}^{+\infty}\left(\int_{-Ra_R}^{+\infty}f(x)\frac{t^{\alpha}e^{\alpha R}}{\Gamma(\alpha)}e^{-4t}e^{-2\alpha R}e^{\alpha R}\mathrm{d}x\right)^2\mathrm{d}t\cdot\frac{e^R}{a_R^2}\\
		&=\frac{4^{2\alpha}}{\Gamma^2(\alpha)}\int_{0}^{+\infty}t^{2\alpha}e^{-8t}\mathrm{d}t\left(\int_{\mathbb{R}}f(x)\mathrm{d}x\right)^2\cdot\frac{e^R}{a_R^2}\\
		&=\frac{\alpha}{2^{2\alpha+3}B(\alpha,\alpha+1)}\left(\int_{\mathbb{R}}f(x)\mathrm{d}x\right)^2\cdot\frac{e^R}{a_R^2}\\
		&=O\Big(\frac{C_R^{(\alpha)}}{a_R^2}\Big).
	\end{split}
\end{equation*}
Therefore, we obtain
\begin{equation*}
	\begin{split}
		\Var\Big(\sum_{z\in\mathcal{H}_{\alpha}}f\big(a_R(|z|_{\mathrm{h}}-R)\big)\Big)=\frac{2C_R^{(\alpha)}}{a_R}\int_{\mathbb{R}}f^2(x)\mathrm{d}x+O\Big(\frac{1}{a_R}\Big)+O\Big(\frac{C_R^{(\alpha)}}{a_R^2}\Big).
	\end{split}
\end{equation*}

In the case $1\ll a_R\ll e^R$ as $R\to+\infty$, applying Soshnikov's Theorem~\ref{Soshnikov}, the above calculations yield that
\begin{equation*}
	\begin{split}
		\frac{\sum_{z\in\mathcal{H}_{\alpha}}f\big(a_R(|z|_{\mathrm{h}}-R)\big)-\mathbb{E}\big[\sum_{z\in\mathcal{H}_{\alpha}}f\big(a_R(|z|_{\mathrm{h}}-R)\big)\big]}{\sqrt{\Var\big(\sum_{z\in\mathcal{H}_{\alpha}}f\big(a_R(|z|_{\mathrm{h}}-R)\big)\big) }}\xrightarrow[R\to+\infty]{\mathrm{law}}\mathcal{N}(0,1).
	\end{split}
\end{equation*}
This gives that
\begin{equation*}
	\begin{split}
		&\quad\,\sqrt{\frac{a_R}{C_R^{(\alpha)}}}\sum_{z\in\mathcal{H}_{\alpha}}f\big(a_R(|z|_{\mathrm{h}}-R)\big)-\sqrt{\frac{a_R}{C_R^{(\alpha)}}}\mathbb{E}\Big[\sum_{z\in\mathcal{H}_{\alpha}}f\big(a_R(|z|_{\mathrm{h}}-R)\big)\Big]\\
		&\xrightarrow[R\to+\infty]{\mathrm{law}}\mathcal{N}\Big(0,2\int_{\mathbb{R}}f^2(x)\mathrm{d}x\Big).
	\end{split}
\end{equation*}
Moreover,
\begin{equation*}
	\begin{split}
		\sqrt{\frac{a_R}{C_R^{(\alpha)}}}\mathbb{E}\Big[\sum_{z\in\mathcal{H}_{\alpha}}f\big(a_R(|z|_{\mathrm{h}}-R)\big)\Big]-2\sqrt{\frac{C_R^{(\alpha)}}{a_R}}\int_{\mathbb{R}}f(x)\mathrm{d}x\xrightarrow[R\to+\infty]{}0,
	\end{split}
\end{equation*}
hence
\begin{equation*}
	\begin{split}
		\sqrt{\frac{a_R}{C_R^{(\alpha)}}}\sum_{z\in\mathcal{H}_{\alpha}}f\big(a_R(|z|_{\mathrm{h}}-R)\big)-2\sqrt{\frac{C_R^{(\alpha)}}{a_R}}\int_{\mathbb{R}}f(x)\mathrm{d}x\xrightarrow[R\to+\infty]{\mathrm{law}}\mathcal{N}\Big(0,2\int_{\mathbb{R}}f^2(x)\mathrm{d}x\Big),
	\end{split}
\end{equation*}
This completes the proof of Theorem~\ref{White Noise Hyperbolic}.

\section{Proof of Theorem~\ref{White Noise Ginibre} (Ginibre case)}\label{Proof of White Noise Ginibre}
In the case $1\ll a_R\ll R$ as $R\to+\infty$, the proof of Theorem~\ref{White Noise Ginibre} follows the same steps as the proof of Theorem~\ref{Extreme Ginibre Normality}.

For the expectation of the random variable $\sum_{z\in\mathcal{G}}f\big(a_R(|z|-R)\big)$, we have
\begin{equation*}
	\begin{split}
		\mathbb{E}\Big[\sum_{z\in\mathcal{G}}f\big(a_R(|z|-R)\big)\Big]&=2\int_{-R}^{+\infty}f(a_Rx)\mathrm{d}x\cdot R+2\int_{-R}^{+\infty}xf(a_Rx)\mathrm{d}x\\
		&=2\int_{-Ra_R}^{+\infty}f(x)\mathrm{d}x\cdot\frac{R}{a_R}+2\int_{-Ra_R}^{+\infty}xf(x)\mathrm{d}x\cdot\frac{1}{a_R^2},
	\end{split}
\end{equation*}
that is,
\begin{equation*}
	\begin{split}
		\mathbb{E}\Big[\sum_{z\in\mathcal{G}}f\big(a_R(|z|-R)\big)\Big]=\frac{2R}{a_R}\int_{\mathbb{R}}f(x)\mathrm{d}x+O\Big(\frac{1}{a_R^2}\Big).
	\end{split}
\end{equation*}

As for the variance of the random variable $\sum_{z\in\mathcal{G}}f\big(a_R(|z|-R)\big)$, we have
\begin{equation*}
	\begin{split}
		&\quad\,\Var\Big(\sum_{z\in\mathcal{G}}f\big(a_R(|z|-R)\big)\Big)\\
		&=\frac{2}{\pi}\int_{-R}^{+\infty}\int_{-R}^{+\infty}\big[f(a_Rx)-f(a_Ry)\big]^2e^{-(x-y)^2}\sqrt{(R+x)(R+y)}\\
		&\qquad\qquad\qquad\qquad\cdot\int_{0}^{\pi\sqrt{(R+x)(R+y)}}e^{-4(R+x)(R+y)\sin^2\frac{t}{2\sqrt{(R+x)(R+y)}}}\mathrm{d}t\mathrm{d}x\mathrm{d}y\\
		&=\frac{2}{\pi}\int_{-Ra_R}^{+\infty}\int_{-Ra_R}^{+\infty}\big[f(x)-f(y)\big]^2e^{-\frac{(x-y)^2}{a_R^2}}\sqrt{\Big(R+\frac{x}{a_R}\Big)\Big(R+\frac{y}{a_R}\Big)}\\
		&\qquad\qquad\qquad\qquad\cdot\int_{0}^{\pi\sqrt{(R+\frac{x}{a_R})(R+\frac{y}{a_R})}}e^{-4(R+\frac{x}{a_R})(R+\frac{y}{a_R})\sin^2\frac{t}{2\sqrt{(R+\frac{x}{a_R})(R+\frac{y}{a_R})}}}\mathrm{d}t\mathrm{d}x\mathrm{d}y\cdot\frac{1}{a_R^2}\\
		&=I_1(R)+I_2(R),
	\end{split}
\end{equation*}
where
\begin{equation*}
	\begin{split}
		I_1(R)&=\frac{4}{\pi}\int_{-Ra_R}^{+\infty}\int_{-Ra_R}^{+\infty}\int_{0}^{\pi\sqrt{(R+\frac{x}{a_R})(R+\frac{y}{a_R})}}f^2(x)e^{-\frac{(x-y)^2}{a_R^2}}\sqrt{\Big(1+\frac{x}{Ra_R}\Big)\Big(1+\frac{y}{Ra_R}\Big)}\\
		&\qquad\qquad\qquad\qquad\qquad\qquad\cdot e^{-4(R+\frac{x}{a_R})(R+\frac{y}{a_R})\sin^2{\frac{t}{2\sqrt{(R+\frac{x}{a_R})(R+\frac{y}{a_R})}}}}\mathrm{d}t\mathrm{d}x\mathrm{d}y\cdot\frac{R}{a_R^2}\\
		&=\frac{4}{\pi}\int_{-R}^{+\infty}\int_{-Ra_R}^{+\infty}\int_{0}^{\pi\sqrt{(R+\frac{x}{a_R})(R+u)}}f^2(x)e^{-(\frac{x}{a_R}-u)^2}\sqrt{\Big(1+\frac{x}{Ra_R}\Big)\Big(1+\frac{u}{R}\Big)}\\
		&\qquad\qquad\qquad\qquad\qquad\qquad\cdot e^{-4(R+\frac{x}{a_R})(R+u)\sin^2{\frac{t}{2\sqrt{(R+\frac{x}{a_R})(R+u)}}}}\mathrm{d}t\mathrm{d}x\mathrm{d}u\cdot\frac{R}{a_R},
	\end{split}
\end{equation*}
and
\begin{equation*}
	\begin{split}
		I_2(R)&=\frac{4}{\pi}\int_{-Ra_R}^{+\infty}\int_{-Ra_R}^{+\infty}\int_{0}^{\pi\sqrt{(R+\frac{x}{a_R})(R+\frac{y}{a_R})}}f(x)f(y)e^{-\frac{(x-y)^2}{a_R^2}}\sqrt{\Big(1+\frac{x}{Ra_R}\Big)\Big(1+\frac{y}{Ra_R}\Big)}\\
		&\qquad\qquad\qquad\qquad\qquad\qquad\quad\cdot e^{-4(R+\frac{x}{a_R})(R+\frac{y}{a_R})\sin^2{\frac{t}{2\sqrt{(R+\frac{x}{a_R})(R+\frac{y}{a_R})}}}}\mathrm{d}t\mathrm{d}x\mathrm{d}y\cdot\frac{R}{a_R^2}.
	\end{split}
\end{equation*}

Consider the integral
\begin{equation*}
	\begin{split}
		&\int_{\mathbb{R}^3}f^2(x)e^{-(\frac{x}{a_R}-u)^2}\sqrt{\Big(1+\frac{x}{Ra_R}\Big)\Big(1+\frac{u}{R}\Big)}\,e^{-4(R+\frac{x}{a_R})(R+u)\sin^2{\frac{t}{2\sqrt{(R+\frac{x}{a_R})(R+u)}}}}\\
		&\qquad\qquad\qquad\quad\,\,\cdot\chi_{\left[0,\pi\sqrt{(R+\frac{x}{a_R})(R+u)}\right]}(t)\chi_{[-Ra_R,+\infty)}(x)\chi_{[-R,+\infty)}(u)\mathrm{d}t\mathrm{d}x\mathrm{d}u.
	\end{split}
\end{equation*}
When $R$ is sufficiently large, the integrand is dominated by
\begin{equation*}
	\begin{split}
		e^{2l^2}f^2(x)e^{-(|u|-l)^2}\sqrt{(1+|x|)(1+|u|)}\,e^{-\frac{4}{\pi^2}t^2}\chi_{[0,+\infty)}(t),
	\end{split}
\end{equation*}
where $l=\max\limits_{x\in\supp f}|x|$. Here we used the fact that when $x\in\supp f$,
\begin{equation*}
	\begin{split}
		e^{-(\frac{x}{a_R}-u)^2}=e^{-u^2+2u\frac{x}{a_R}-\frac{x^2}{a_R^2}}\leq e^{-u^2+2|u||x|+|x|^2}\leq e^{-u^2+2|u|l+l^2}=e^{2l^2}e^{-(|u|-l)^2}.
	\end{split}
\end{equation*}
Hence by the dominated convergence theorem, as $R\to+\infty$, the integral converges to
\begin{equation*}
	\begin{split}
		\int_{\mathbb{R}^3}f^2(x)e^{-u^2}e^{-t^2}\chi_{[0,+\infty)}(t)\mathrm{d}t\mathrm{d}x\mathrm{d}u=\frac{\pi}{2}\int_{\mathbb{R}}f^2(x)\mathrm{d}x.
	\end{split}
\end{equation*}
It follows that when $R\to+\infty$,
\begin{equation*}
	\begin{split}
		I_1(R)\sim\frac{2R}{a_R}\int_{\mathbb{R}}f^2(x)\mathrm{d}x.
	\end{split}
\end{equation*}

Consider the integral
\begin{equation*}
	\begin{split}
		&\int_{\mathbb{R}^3}f(x)f(y)e^{-\frac{(x-y)^2}{a_R^2}}\sqrt{\Big(1+\frac{x}{Ra_R}\Big)\Big(1+\frac{y}{Ra_R}\Big)}\,e^{-4(R+\frac{x}{a_R})(R+\frac{y}{a_R})\sin^2{\frac{t}{2\sqrt{(R+\frac{x}{a_R})(R+\frac{y}{a_R})}}}}\\
		&\qquad\qquad\qquad\qquad\quad\,\,\cdot\chi_{\left[0,\pi\sqrt{(R+\frac{x}{a_R})(R+\frac{y}{a_R})}\right]}(t)\chi_{[-Ra_R,+\infty)}(x)\chi_{[-Ra_R,+\infty)}(u)\mathrm{d}t\mathrm{d}x\mathrm{d}y.
	\end{split}
\end{equation*}
When $R$ is sufficiently large, the integrand is dominated by
\begin{equation*}
	\begin{split}
		|f(x)f(y)|\sqrt{(1+|x|)(1+|y|)}\,e^{-\frac{4}{\pi^2}t^2}\chi_{[0,+\infty)}(t).
	\end{split}
\end{equation*}
Hence by the dominated convergence theorem, as $R\to+\infty$, the integral converges to
\begin{equation*}
	\begin{split}
		\int_{\mathbb{R}^3}f(x)f(y)e^{-t^2}\chi_{[0,+\infty)}(t)\mathrm{d}t\mathrm{d}x\mathrm{d}y=\frac{\sqrt{\pi}}{2}\left(\int_{\mathbb{R}}f(x)\mathrm{d}x\right)^2.
	\end{split}
\end{equation*}
It follows that when $R\to+\infty$,
\begin{equation*}
	\begin{split}
		I_2(R)\sim\frac{2R}{\sqrt{\pi}a_R^2}\left(\int_{\mathbb{R}}f(x)\mathrm{d}x\right)^2=o\Big(\frac{R}{a_R}\Big).
	\end{split}
\end{equation*}

Therefore, when $R\to+\infty$,
\begin{equation*}
	\begin{split}
		\Var\Big(\sum_{z\in\mathcal{G}}f\big(a_R(|z|-R)\big)\Big)\sim\frac{2R}{a_R}\int_{\mathbb{R}}f^2(x)\mathrm{d}x.
	\end{split}
\end{equation*}

In the case $1\ll a_R\ll R$ as $R\to+\infty$, applying Soshnikov's Theorem~\ref{Soshnikov}, the above calculations yield that
\begin{equation*}
	\begin{split}
		\frac{\sum_{z\in\mathcal{G}}f\big(a_R(|z|-R)\big)-\mathbb{E}\big[\sum_{z\in\mathcal{G}}f\big(a_R(|z|-R)\big)\big]}{\sqrt{\Var\big(\sum_{z\in\mathcal{G}}f\big(a_R(|z|-R)\big)\big) }}\xrightarrow[R\to+\infty]{\mathrm{law}}\mathcal{N}(0,1).
	\end{split}
\end{equation*}
This gives that
\begin{equation*}
	\begin{split}
		\sqrt{\frac{a_R}{R}}\sum_{z\in\mathcal{G}}f\big(a_R(|z|-R)\big)-\sqrt{\frac{a_R}{R}}\mathbb{E}\Big[\sum_{z\in\mathcal{G}}f\big(a_R(|z|-R)\big)\Big]\xrightarrow[R\to+\infty]{\mathrm{law}}\mathcal{N}\Big(0,2\int_{\mathbb{R}}f^2(x)\mathrm{d}x\Big).
	\end{split}
\end{equation*}
Moreover,
\begin{equation*}
	\begin{split}
		\sqrt{\frac{a_R}{R}}\mathbb{E}\Big[\sum_{z\in\mathcal{G}}f\big(a_R(|z|-R)\big)\Big]-2\sqrt{\frac{R}{a_R}}\int_{\mathbb{R}}f(x)\mathrm{d}x\xrightarrow[R\to+\infty]{}0,
	\end{split}
\end{equation*}
hence
\begin{equation*}
	\begin{split}
		\sqrt{\frac{a_R}{R}}\sum_{z\in\mathcal{G}}f\big(a_R(|z|-R)\big)-2\sqrt{\frac{R}{a_R}}\int_{\mathbb{R}}f(x)\mathrm{d}x\xrightarrow[R\to+\infty]{\mathrm{law}}\mathcal{N}\Big(0,2\int_{\mathbb{R}}f^2(x)\mathrm{d}x\Big).
	\end{split}
\end{equation*}
This completes the proof of Theorem~\ref{White Noise Ginibre}.

\section{Proof of Theorem~\ref{Hyperbolic Poisson} and Theorem~\ref{Ginibre Poisson}}
\subsection{A Lemma of convergence towards the Poisson point process}
The main reason why a Poisson point process appears is because we are dealing with independent particles. Nevertheless, this is not enough. We need that each particle escapes every bounded set and we need this to be done in a uniform way. More precisely, we use the following lemma.

\begin{lemma}\label{lemma}
	For each $R>0$, let $\{X_n^{(R)}:n\in\mathbb{N}\}$ be a sequence of independent real-valued random variables. Suppose that for every compact $K\subset\mathbb{R}$,
	\begin{equation*}
		\begin{split}
			\sup_{n\geq0}\mathbb{P}(X_n^{(R)}\in K)\xrightarrow[R\to+\infty]{}0,
		\end{split}
	\end{equation*}
	and suppose that there is a positive Radon measure $\nu$ on $\mathbb{R}$ such that
	\begin{equation*}
		\begin{split}
			\sum_{n=0}^{\infty}\mathbb{E}\big[f(X_n^{(R)})\big]\xrightarrow[R\to+\infty]{}\int_{\mathbb{R}}f\mathrm{d}\nu
		\end{split}
	\end{equation*}
	for every measurable compactly supported function $f:\mathbb{R}\to[0,1)$. Then for every measurable compactly supported function $f:\mathbb{R}\to[0,1)$,
	\begin{equation*}
		\begin{split}
			\mathbb{E}\left[\prod_{n=0}^{\infty}\big(1-f(X_n^{(R)})\big)\right]\xrightarrow[R\to+\infty]{}\exp\left(-\int_{\mathbb{R}}f\mathrm{d}\nu\right).
		\end{split}
	\end{equation*}
\end{lemma}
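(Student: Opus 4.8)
The plan is to use independence to rewrite the expectation of the infinite product as an infinite product of expectations, and then to pass to logarithms, where the second hypothesis (intensity convergence) controls the leading term while the first hypothesis (uniform escape) makes the quadratic error negligible.

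First I would set $p_n^{(R)} = \E[f(X_n^{(R)})]$. Since the $X_n^{(R)}$ are independent and each factor $1-f(X_n^{(R)})$ lies in $(0,1]$, the partial products $\prod_{n=0}^{N}(1-f(X_n^{(R)}))$ decrease pointwise to $\prod_{n=0}^{\infty}(1-f(X_n^{(R)}))$. Combining the finite-product identity $\E[\prod_{n=0}^{N}(1-f(X_n^{(R)}))]=\prod_{n=0}^{N}(1-p_n^{(R)})$ with dominated convergence, I obtain
\[
\E\Big[\prod_{n=0}^{\infty}\big(1-f(X_n^{(R)})\big)\Big]=\prod_{n=0}^{\infty}\big(1-p_n^{(R)}\big),
\]
so it remains to show the right-hand side tends to $\exp(-\int_{\R}f\,\dd\nu)$.

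Next I would establish that $\sup_n p_n^{(R)}\to 0$. Writing $K_f=\supp f$, which is compact, and using $0\le f<1$, I have
\[
p_n^{(R)}=\E\big[f(X_n^{(R)})\,\ch_{\{X_n^{(R)}\in K_f\}}\big]\le \PP\big(X_n^{(R)}\in K_f\big),
\]
hence $\sup_n p_n^{(R)}\le \sup_n \PP(X_n^{(R)}\in K_f)\to 0$ by the uniform-escape hypothesis applied to $K=K_f$. In particular, for $R$ large enough every $p_n^{(R)}\le 1/2$, so all logarithms below are well defined. Passing to logarithms and using the elementary bound $|\log(1-p)+p|\le p^2$ valid for $p\in[0,1/2]$, I get
\[
\Big|\sum_{n=0}^{\infty}\log\big(1-p_n^{(R)}\big)+\sum_{n=0}^{\infty}p_n^{(R)}\Big|\le \sum_{n=0}^{\infty}\big(p_n^{(R)}\big)^2\le \Big(\sup_n p_n^{(R)}\Big)\sum_{n=0}^{\infty}p_n^{(R)}.
\]
By the intensity hypothesis $\sum_n p_n^{(R)}\to\int_{\R}f\,\dd\nu<\infty$, so this sum stays bounded while $\sup_n p_n^{(R)}\to 0$; thus the error vanishes and $\sum_n\log(1-p_n^{(R)})\to-\int_{\R}f\,\dd\nu$. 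Exponentiating yields the claim.

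The main obstacle, and the only genuinely delicate point, is that the linearization $\log(1-p)\approx -p$ breaks down when a factor $1-p_n^{(R)}$ is close to $0$. The uniform-escape hypothesis is exactly what is needed to rule this out, since it forces $\sup_n p_n^{(R)}\to 0$ and so keeps every factor uniformly bounded away from $0$; without it the quadratic correction could fail to be negligible even though $\sum_n p_n^{(R)}$ converges.
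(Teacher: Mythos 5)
Your proof is correct and follows essentially the same route as the paper: independence turns the expectation of the product into a product of expectations, the uniform-escape hypothesis gives $\sup_n \mathbb{E}[f(X_n^{(R)})]\to 0$, and the quadratic error in $\log(1-p)\approx -p$ is controlled by $(\sup_n p_n^{(R)})\sum_n p_n^{(R)}$, with the sum bounded by the intensity hypothesis. If anything, your write-up is slightly more careful than the paper's, since you justify the interchange of expectation with the infinite product and make the linearization bound $|\log(1-p)+p|\le p^2$ explicit on $[0,1/2]$.
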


\begin{proof}[Proof of Lemma~\ref{lemma}]
For every measurable compactly supported function $f:\mathbb{R}\to[0,1)$, by independence, we have
\begin{equation*}
	\begin{split}
		\mathbb{E}\left[\prod_{n=0}^{\infty}\big(1-f(X_n^{(R)})\big)\right]=\prod_{n=0}^{\infty}\mathbb{E}\big[1-f(X_n^{(R)})\big]=\prod_{n=0}^{\infty}\Big(1-\mathbb{E}\big[f(X_n^{(R)})\big]\Big),
	\end{split}
\end{equation*}
and then
\begin{equation*}
	\begin{split}
		\log\mathbb{E}\left[\prod_{n=0}^{\infty}\big(1-f(X_n^{(R)})\big)\right]=\sum_{n=0}^{\infty}\log\Big(1-\mathbb{E}\big[f(X_n^{(R)})\big]\Big).
	\end{split}
\end{equation*}

We can use the fact that $\log(1-x)=-x+\Theta(x)$, where $\Theta(x)=O(x^2)$ as $x\to0$, to obtain that
\begin{equation*}
	\begin{split}
		\sum_{n=0}^{\infty}\log\Big(1-\mathbb{E}\big[f(X_n^{(R)})\big]\Big)=-\sum_{n=0}^{\infty}\mathbb{E}\big[f(X_n^{(R)})\big]+\sum_{n=0}^{\infty}\Theta\Big(\mathbb{E}\big[f(X_n^{(R)})\big]\Big).
	\end{split}
\end{equation*}
Note that
\begin{equation*}
	\begin{split}
		\sup_{n\geq0}\mathbb{E}\big[f(X_n^{(R)})\big]\leq\sup_{n\geq0}\mathbb{P}(X_n^{(R)}\in\supp f)\xrightarrow[R\to+\infty]{}0,
	\end{split}
\end{equation*}
and the convergence of $\sum_{n=0}^{\infty}\mathbb{E}\big[f(X_n^{(R)})\big]$,
we get
\begin{equation*}
	\begin{split}
		\sum_{n=0}^{\infty}\Theta\Big(\mathbb{E}\big[f(X_n^{(R)})\big]\Big)=O\left(\sum_{n=0}^{\infty}\mathbb{E}\big[f(X_n^{(R)})\big]^2\right)=O\left(\sup_{n\geq0}\mathbb{E}\big[f(X_n^{(R)})\big]\right)\xrightarrow[R\to+\infty]{}0.
	\end{split}
\end{equation*}
This implies that
\begin{equation*}
	\begin{split}
		\sum_{n=0}^{\infty}\log\Big(1-\mathbb{E}\big[f(X_n^{(R)})\big]\Big)\xrightarrow[R\to+\infty]{}-\int_{\mathbb{R}}f\mathrm{d}\nu.
	\end{split}
\end{equation*}
Therefore,
\begin{equation*}
	\begin{split}
		\mathbb{E}\left[\prod_{n=0}^{\infty}\big(1-f(X_n^{(R)})\big)\right]\xrightarrow[R\to+\infty]{}\exp\left(-\int_{\mathbb{R}}f\mathrm{d}\nu\right).
	\end{split}
\end{equation*}

This completes the proof of Lemma~\ref{lemma}.
\end{proof}

\begin{corollary}[Convergence towards a PPP]\label{corollary}
	Under the conditions of Lemma~\ref{lemma}. If, moreover, $\mathcal{X}^{(R)}=\{X_n^{(R)}:n\in\mathbb{N}\}$ is a point process on $\mathbb{R}$ for each $R>0$. Then
	\begin{equation*}
		\begin{split}
			\mathcal{X}^{(R)}\xrightarrow[R\to+\infty]{\mathrm{law}}\mathcal{P}_{\nu},
		\end{split}
	\end{equation*}
	where $\mathcal{P}_{\nu}$ is the Poisson point process on $\mathbb{R}$ with mean measure or intensity $\nu$.
\end{corollary}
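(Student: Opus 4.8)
The plan is to read Lemma~\ref{lemma} as a convergence statement for a generating functional of the point process $\mathcal{X}^{(R)}$ and then to invoke the standard fact that convergence of such functionals forces convergence in law. Indeed, for a measurable $h:\mathbb{R}\to[0,1]$ with $1-h$ compactly supported, the quantity $\mathbb{E}\big[\prod_{x\in\mathscr{X}}h(x)\big]$ is the probability generating functional of a point process $\mathscr{X}$, and Lemma~\ref{lemma}, applied with $h=1-f$, says precisely that the generating functionals of $\mathcal{X}^{(R)}$ converge to $\exp(-\int_{\mathbb{R}}f\,\mathrm{d}\nu)$, which is the generating functional of the Poisson point process $\mathcal{P}_{\nu}$. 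So, morally, the corollary is just Lemma~\ref{lemma} combined with the uniqueness of a point-process law determined by its generating functional together with a continuity theorem.

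To phrase this in the most standard form, I would pass to Laplace functionals. Recall that the Laplace functional of a point process $\mathscr{X}$ on $\mathbb{R}$ is $L_{\mathscr{X}}[g]=\mathbb{E}\big[\exp(-\sum_{x\in\mathscr{X}}g(x))\big]$, defined for $g\in C_c^+(\mathbb{R})$, and that a sequence of point processes converges in law in the vague topology on $\Conf(\mathbb{R})$ to a point process $\mathscr{X}$ as soon as $L_{\mathcal{X}^{(R)}}[g]\to L_{\mathscr{X}}[g]$ for every $g\in C_c^+(\mathbb{R})$ (see, e.g., \cite{DV, KK, Le}). Given such a $g$, I would set $f=1-e^{-g}$; then $f$ is measurable, takes values in $[0,1)$ since $g\geq 0$ forces $e^{-g}\in(0,1]$, and satisfies $\supp f=\supp g$, so $f$ is an admissible test function for Lemma~\ref{lemma}. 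Because $\mathcal{X}^{(R)}$ is a point process, hence locally finite, the sum $\sum_{n\geq 0}g(X_n^{(R)})$ is almost surely finite and equals $\int_{\mathbb{R}}g\,\mathrm{d}\mathcal{X}^{(R)}$; moreover $\prod_{n\geq 0}(1-f(X_n^{(R)}))=\exp(-\sum_{n\geq 0}g(X_n^{(R)}))$, so Lemma~\ref{lemma} yields
\begin{equation*}
	\begin{split}
		\mathbb{E}\Big[\exp\Big(-\sum_{n=0}^{\infty}g(X_n^{(R)})\Big)\Big]\xrightarrow[R\to+\infty]{}\exp\Big(-\int_{\mathbb{R}}\big(1-e^{-g}\big)\,\mathrm{d}\nu\Big).
	\end{split}
\end{equation*}
The right-hand side is exactly $L_{\mathcal{P}_{\nu}}[g]$, the Laplace functional of the Poisson point process with intensity $\nu$.

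Having identified the limit, I would finish by the continuity theorem: since $L_{\mathcal{X}^{(R)}}[g]\to L_{\mathcal{P}_{\nu}}[g]$ for every $g\in C_c^+(\mathbb{R})$ and $\mathcal{P}_{\nu}$ is a genuine point process (as $\nu$ is a Radon measure), it follows that $\mathcal{X}^{(R)}\xrightarrow[R\to+\infty]{\mathrm{law}}\mathcal{P}_{\nu}$. I do not expect any serious obstacle: the whole point is to recognize Lemma~\ref{lemma} as a statement about generating/Laplace functionals, and the remaining verifications are routine, namely that $f=1-e^{-g}$ is admissible and that the limiting functional is the Laplace functional of a well-defined point process, so that the standard continuity theorem for point-process laws applies.
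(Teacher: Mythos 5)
Your proposal is correct and coincides with the paper's (implicit) justification: the paper states the corollary without further proof, treating it as an immediate consequence of Lemma~\ref{lemma} via the standard continuity theorem for generating functionals of point processes, which is exactly what you make explicit through the reparametrization $f=1-e^{-g}$ turning the lemma's conclusion into convergence of Laplace functionals toward $\exp\big(-\int_{\mathbb{R}}(1-e^{-g})\,\mathrm{d}\nu\big)=L_{\mathcal{P}_{\nu}}[g]$. Your verification that $f=1-e^{-g}$ is an admissible test function and that local finiteness of $\mathcal{X}^{(R)}$ justifies the product--exponential identity correctly fills in the routine details the paper omits.
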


\subsection{Proof of Theorem~\ref{Hyperbolic Poisson} (hyperbolic case)}
Recall that
$(\rho_n^{(\alpha)})_{n\geq0}$, $\alpha>0$, is a family of non-negative independent random variables such that
\begin{equation*}
	\begin{split}
		\rho_n^{(\alpha)}\sim2\frac{\Gamma(\alpha+n+1)}{\Gamma(\alpha)\Gamma(n+1)}r^{2n+1}(1-r^2)^{\alpha-1}\mathrm{d}r.
	\end{split}
\end{equation*}
Fix $\alpha>0$, if we define $X_n^{(R)}=e^R(|\rho_n^{(\alpha)}|_{\mathrm{h}}-R)$ for each $R>0$ and $n\in\mathbb{N}$, we can obtain that
\begin{equation*}
	\begin{split}
		\{e^{R}(|z|_{\mathrm{h}}-R):z\in\mathcal{H}_{\alpha}\}\sim\{X_n^{(R)}:n\in\mathbb{N}\}.
	\end{split}
\end{equation*}
We are going to show that
\begin{equation*}
	\begin{split}
		\{X_n^{(R)}:n\in\mathbb{N}\}\xrightarrow[R\to+\infty]{\mathrm{law}}\mathcal{P}_{\alpha/4}.
	\end{split}
\end{equation*}

For every measurable compactly supported function $f:\mathbb{R}\to[0,1)$, a similar argument as Section~\ref{Proof of White Noise Hyperbolic} gives that
\begin{equation*}
	\begin{split}
		&\quad\,\sum_{n=0}^{\infty}\mathbb{E}\big[f(X_n^{(R)})\big]\\
		&=4^{\alpha}\int_{-Re^R}^{+\infty}\int_{0}^{+\infty}f(x)e^{x/e^R}k_{\lfloor te^{R+x/e^R}\rfloor}^{(\alpha)}\Big(1-\frac{2}{e^{R+x/e^R}+1}\Big)^{2{\lfloor te^{R+x/e^R}\rfloor}+1}\frac{e^{\alpha(R+x/e^R)}}{(e^{R+x/e^R}+1)^{2\alpha}}\mathrm{d}t\mathrm{d}x\\
		&\xrightarrow[R\to+\infty]{}\frac{\alpha}{4}\int_{\mathbb{R}}f(x)\mathrm{d}x.
	\end{split}
\end{equation*}

Hence by Corollary~\ref{corollary}, we shall prove that for every $T>0$,
\begin{equation*}
	\begin{split}
		\sup_{n\geq0}\mathbb{P}(X_n^{(R)}\in[-T,T])\xrightarrow[R\to+\infty]{}0.
	\end{split}
\end{equation*}
When $R$ is large enough, we have
\begin{equation*}
	\begin{split}
		\mathbb{P}(X_n^{(R)}\in [-T,T])&=4^{\alpha}\int_{-T}^{T}\frac{\Gamma(\alpha+n+1)}{\Gamma(\alpha)\Gamma(n+1)}\Big(1-\frac{2}{e^{R+x/e^R}+1}\Big)^{2n+1}\frac{e^{\alpha(R+x/e^R)}}{(e^{R+x/e^R}+1)^{2\alpha}}\mathrm{d}x\cdot\frac{1}{e^R}\\
		&\leq\frac{2^{2\alpha+1}T}{e^R}\cdot\frac{\Gamma(\alpha+n+1)}{\Gamma(\alpha)\Gamma(n+1)}\Big(1-\frac{2}{e^{R+T/e^R}+1}\Big)^{2n+1}\frac{1}{e^{\alpha(R-T/e^R)}}\\
		&\leq\frac{2^{4\alpha+1}T}{e^R}\cdot\frac{\Gamma(\alpha+n+1)}{\Gamma(\alpha)\Gamma(n+1)}\Big(1-\frac{1}{2e^R}\Big)^{2n+1}\frac{1}{e^{\alpha R}}.
	\end{split}
\end{equation*}
Notice that there exists a constant $C>0$ depending only on $\alpha$ such that for any $n\in\mathbb{N}$,
\begin{equation*}
	\begin{split}
		\frac{\Gamma(\alpha+n+1)}{\Gamma(\alpha)\Gamma(n+1)}\leq Cn^{\alpha},
	\end{split}
\end{equation*}
and
\begin{equation*}
	\begin{split}
		\Big(1-\frac{1}{2e^R}\Big)^{2n+1}=\Big[\Big(1+\frac{1}{2e^R-1}\Big)^{2e^R}\Big]^{-\frac{2n+1}{2e^R}}\leq e^{-n/e^R}.
	\end{split}
\end{equation*}
Hence
\begin{equation*}
	\begin{split}
		\mathbb{P}(X_n^{(R)}\in[-T,T])\leq\frac{2^{4\alpha+1}CT}{e^R}(n/e^R)^{\alpha}e^{-n/e^R}.
	\end{split}
\end{equation*}
Since $x^{\alpha}e^{-x}$ is bounded for $x>0$, we get
\begin{equation*}
	\begin{split}
		\sup_{n\geq0}\mathbb{P}(X_n^{(R)}\in[-T,T])=O(e^{-R}).
	\end{split}
\end{equation*}

This completes the proof of Theorem~\ref{Hyperbolic Poisson}.

\subsection{Proof of Theorem~\ref{Ginibre Poisson} (Ginibre case)}
Recall that
$(\rho_n)_{n\geq0}$ is a family of non-negative independent random variables such that
\begin{equation*}
	\begin{split}
		\rho_n\sim\frac{2r^{2n+1}e^{-r^2}}{n!}\mathrm{d}r.
	\end{split}
\end{equation*}
If we define $X_n^{(R)}=R(\rho_n-R)$ for each $R>0$ and $n\in\mathbb{N}$, we can obtain that
\begin{equation*}
	\begin{split}
		\{R(|z|-R):z\in\mathcal{G}\}\sim\{X_n^{(R)}:n\in\mathbb{N}\}.
	\end{split}
\end{equation*}
We are going to show that
\begin{equation*}
	\begin{split}
		\{X_n^{(R)}:n\in\mathbb{N}\}\xrightarrow[R\to+\infty]{\mathrm{law}}\mathcal{P}_2.
	\end{split}
\end{equation*}

For every measurable compactly supported function $f:\mathbb{R}\to[0,1)$, a similar argument as Section~\ref{Proof of White Noise Ginibre} gives that
\begin{equation*}
	\begin{split}
		\sum_{n=0}^{\infty}\mathbb{E}\big[f(X_n^{(R)})\big]=2\int_{-R^2}^{+\infty}f(x)\mathrm{d}x+2\int_{-R^2}^{+\infty}xf(x)\mathrm{d}x\cdot\frac{1}{R^2}\xrightarrow[R\to+\infty]{}2\int_{\mathbb{R}}f(x)\mathrm{d}x.
	\end{split}
\end{equation*}

Hence by Corollary~\ref{corollary}, we shall prove that for every $T>0$,
\begin{equation*}
	\begin{split}
		\sup_{n\geq0}\mathbb{P}(X_n^{(R)}\in[-T,T])\xrightarrow[R\to+\infty]{}0.
	\end{split}
\end{equation*}
When $R$ is large enough, we have
\begin{equation*}
	\begin{split}
		\mathbb{P}(X_n^{(R)}\in [-T,T])&=2\int_{-T}^{T}\frac{(R+\frac{x}{R})^{2n+1}}{n!}e^{-(R+\frac{x}{R})}\mathrm{d}x\cdot\frac{1}{R}\\
		&\leq4\frac{R^{2n}(1+\frac{T}{R^2})^{2n}}{n!}e^{-R^2}\int_{-T}^{T}e^{-2x-\frac{x^2}{R^2}}\mathrm{d}x\\
		&\leq8Te^{2T}\frac{R^{2n}e^{\frac{2nT}{R^2}}}{n!}e^{-R^2}\\
		&\leq8Te^{2T}\frac{\big(\lceil R^2e^{\frac{2T}{R^2}}\rceil\big)^n}{n!}e^{-R^2}.
	\end{split}
\end{equation*}
Notice that
\begin{equation*}
	\begin{split}
		\frac{\lceil R^2e^{\frac{2T}{R^2}}\rceil^n}{n!}\leq\frac{\big(\lceil R^2e^{\frac{2T}{R^2}}\rceil\big)^{\lceil R^2e^{\frac{2T}{R^2}}\rceil}}{\lceil R^2e^{\frac{2T}{R^2}}\rceil!}=O\left(\frac{e^{\lceil R^2e^{\frac{2T}{R^2}}\rceil}}{\sqrt{\lceil R^2e^{\frac{2T}{R^2}}\rceil}}\right)=O\Big(\frac{e^{R^2}}{R}\Big),
	\end{split}
\end{equation*}
where we used the Stirling's approximation. It follows that
\begin{equation*}
	\begin{split}
		\sup_{n\geq0}\mathbb{P}(X_n^{(R)}\in[-T,T])=O(R^{-1}).
	\end{split}
\end{equation*}

This completes the proof of Theorem~\ref{Ginibre Poisson}.

\section{The case of superexponential growth}
\subsection{Fluctuations in the hyperbolic case}
Theorem~\ref{Hyperbolic Poisson} explains that the limiting behaviour of the point process $\{e^{R}(|z|_{\mathrm{h}}-R):z\in\mathcal{H}_{\alpha}\}$ is Poisson when $R\to+\infty$. For a bounded measurable and compactly supported function $f: \mathbb{R}\to\mathbb{R}$, Theorem~\ref{Extreme Hyperbolic Normality} illustrates that the limiting behaviour of $\sum_{z\in\mathcal{H}_{\alpha}}f(|z|_{\mathrm{h}}-R)$ is Gaussian when $R\to+\infty$. In the case $1\ll a_R\ll e^R$ as $R\to+\infty$, Theorem~\ref{White Noise Hyperbolic} shows that the limiting behaviour of $\sum_{z\in\mathcal{H}_{\alpha}}f\big(a_R(|z|_{\mathrm{h}}-R)\big)$ is also Gaussian. For completeness, we continue to consider the limiting behaviour of $\sum_{z\in\mathcal{H}_{\alpha}}f\big(a_R(|z|_{\mathrm{h}}-R)\big)$ in the case $a_R\gg e^R$ and $a_R\ll 1$ as $R\to+\infty$.

In the case $a_R\gg e^R$ as $R\to+\infty$, for a bounded measurable and compactly supported function $f: \mathbb{R}\to\mathbb{R}$, the limiting behaviour of $\sum_{z\in\mathcal{H}_{\alpha}}f\big(a_R(|z|_{\mathrm{h}}-R)\big)$ is zero. This can be seen by
\begin{equation*}
	\begin{split}
		\Var\Big(\sum_{z\in\mathcal{H}_{\alpha}}f\big(a_R(|z|_{\mathrm{h}}-R)\big)\Big)\xrightarrow[R\to+\infty]{}0.
	\end{split}
\end{equation*}
In fact, a similar argument as in Section~\ref{Proof of White Noise Hyperbolic} implies that
\begin{equation*}
	\begin{split}
		\Var\Big(\sum_{z\in\mathcal{H}_{\alpha}}f\big(a_R(|z|_{\mathrm{h}}-R)\big)\Big)\leq\sum_{z\in\mathcal{H}_{\alpha}}\mathbb{E}\Big[f^2\big(a_R(|z|_{\mathrm{h}}-R)\big)\Big]\sim\frac{\alpha e^R}{4a_R}\int_{\mathbb{R}}f^2(x)\mathrm{d}x\xrightarrow[R\to+\infty]{}0.
	\end{split}
\end{equation*}

We now turn to consider the case $a_R\ll 1$ as $R\to+\infty$. For a bounded measurable and compactly supported function $f: \mathbb{R}\to\mathbb{R}$, denote $M_f$ the essential right endpoint of $\supp f$, that is, 
\begin{equation*}
	\begin{split}
		M_f:=\inf\big\{M\in\mathbb{R}\,\big|\,f(x)=0\text{ almost everywhere on }[M,+\infty)\big\}.
	\end{split}
\end{equation*}

\begin{theorem}\label{Small White Noise Hyperbolic}
	Suppose that $a_R>0$ satisfies $a_R\ll 1$ as $R\to+\infty$. Let $f$ be a real-valued bounded measurable function on $\mathbb{R}$ with compact support such that $f(M_f^-):=\lim\limits_{x\to M_f^-}f(x)$ exists and is non-zero. If $R+M_f/a_R\to+\infty$ as $R\to+\infty$, then for each $\alpha>0$,
	\begin{equation*}
		\begin{split}
			\frac{\sum_{z\in\mathcal{H}_{\alpha}}f\big(a_R(|z|_{\mathrm{h}}-R)\big)-\mathbb{E}\big[\sum_{z\in\mathcal{H}_{\alpha}}f\big(a_R(|z|_{\mathrm{h}}-R)\big)\big]}{\sqrt{\Var\big(\sum_{z\in\mathcal{H}_{\alpha}}f\big(a_R(|z|_{\mathrm{h}}-R)\big)\big)}}\xrightarrow[R\to+\infty]{\mathrm{law}}\mathcal{N}(0,1).
		\end{split}
	\end{equation*}
\end{theorem}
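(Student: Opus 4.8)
The plan is to deduce the statement from Soshnikov's central limit theorem, Theorem~\ref{Soshnikov}, applied to the determinantal process $\mathcal{H}_{\alpha}$ on $\mathbb{D}$ with the radial test function $f_R(z)=f\big(a_R(|z|_{\mathrm{h}}-R)\big)$. For fixed $R$ this function is bounded, measurable, and supported on the annulus $\{u^-\le|z|_{\mathrm{h}}\le u^\ast\}$ with $u^\ast=R+M_f/a_R$, which is a pre-compact subset of $\mathbb{D}$, so the setup is legitimate. Everything then reduces to checking the three hypotheses of Theorem~\ref{Soshnikov}. The second one is immediate: $\sup_z|f_R(z)|=\sup|f|$ is a fixed finite constant, hence $o\big((\Var S_{f_R})^\varepsilon\big)$ for every $\varepsilon>0$ as soon as $\Var S_{f_R}\to+\infty$. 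Thus the genuine content is the divergence of the variance and the comparison $\mathbb{E}S_{|f_R|}=O\big((\Var S_{f_R})^\delta\big)$.

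First I would record the size of the first moment. The intensity of $\{|z|_{\mathrm{h}}:z\in\mathcal{H}_{\alpha}\}$ equals $\tfrac{\alpha}{4}e^{u}\big(1+o(1)\big)$ at hyperbolic radius $u\to+\infty$, as follows from Subsection~\ref{Expected value calculation for Extreme Hyperbolic Normality} (where $2C_R^{(\alpha)}=\alpha e^R/4$). Changing variables $x=a_R(u-R)$,
\begin{equation*}
\mathbb{E}S_{|f_R|}=\frac{\alpha e^{R}}{4a_R}\int_{\mathbb{R}}e^{x/a_R}|f(x)|\,\mathrm{d}x\,\big(1+o(1)\big).
\end{equation*}
Since $a_R\to0$, a Laplace-type estimate concentrates the integral at the right endpoint $M_f$ of $\supp f$; using that $f(M_f^-)$ exists and is non-zero it evaluates to $|f(M_f^-)|\,a_R\,e^{M_f/a_R}\big(1+o(1)\big)$, whence
\begin{equation*}
\mathbb{E}S_{|f_R|}=\frac{\alpha}{4}\,|f(M_f^-)|\,e^{R+M_f/a_R}\big(1+o(1)\big),
\end{equation*}
which tends to $+\infty$ precisely because $R+M_f/a_R\to+\infty$. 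The same computation with $|f|$ replaced by $f^2$ gives $\sum_{n}\mathbb{E}\big[f_R(\rho_n^{(\alpha)})^2\big]=\tfrac{\alpha}{4}f(M_f^-)^2e^{R+M_f/a_R}\big(1+o(1)\big)$. Throughout, the asymptotics \eqref{estimation one}--\eqref{estimation three}, controlled by Lemma~\ref{lem:BoundExp} and Lemma~\ref{lem:Stirling}, justify replacing the discrete sums by the limiting integrals, exactly as in Subsection~\ref{Expected value calculation for Extreme Hyperbolic Normality}.

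The heart of the proof is the lower bound on the variance. Using independence of the moduli,
\begin{equation*}
\Var S_{f_R}=\sum_{n=0}^{\infty}\Var\Big(f\big(a_R(|\rho_n^{(\alpha)}|_{\mathrm{h}}-R)\big)\Big).
\end{equation*}
Because $a_R\to0$ the rescaled test function is slowly varying in the radial variable, so over the $O(1)$-window in which each $|\rho_n^{(\alpha)}|_{\mathrm{h}}$ fluctuates the function $f_R$ is essentially constant away from the endpoints of $\supp f$; the bulk terms therefore contribute negligibly and the fluctuations are produced by the jump of $f$ at $M_f$, located at radius $u^\ast=R+M_f/a_R$. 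Near $u^\ast$ one has $f_R\approx f(M_f^-)\,\mathbbm{1}_{\{|z|_{\mathrm{h}}<u^\ast\}}$, and each modulus whose typical radius lies within an $O(1)$-window of $u^\ast$ contributes $\asymp f(M_f^-)^2$; since the intensity there is $\tfrac{\alpha}{4}e^{u^\ast}$ there are $\asymp e^{u^\ast}$ such terms. This yields $\Var S_{f_R}\gtrsim f(M_f^-)^2 e^{R+M_f/a_R}$, and together with the trivial upper bound $\Var S_{f_R}\le\sum_{n}\mathbb{E}\big[f_R(\rho_n^{(\alpha)})^2\big]=O\big(e^{R+M_f/a_R}\big)$ we get $\Var S_{f_R}\asymp e^{R+M_f/a_R}\to+\infty$. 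This establishes the first hypothesis, and also the third with $\delta=1$, since then $\mathbb{E}S_{|f_R|}=O\big(e^{R+M_f/a_R}\big)=O(\Var S_{f_R})$.

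With all three hypotheses of Theorem~\ref{Soshnikov} verified, the centered normalized statistic converges to $\mathcal{N}(0,1)$, which is exactly the assertion. The main obstacle is the variance lower bound: in the slowly-varying regime $a_R\ll1$ almost all of the first and second moments are carried by the bulk, where $f_R$ is locally constant and contributes nothing to the fluctuations, so one must isolate the genuinely fluctuating edge contribution and show it still diverges. This is where the hypotheses enter decisively, $f(M_f^-)\neq0$ making the edge a true discontinuity and $R+M_f/a_R\to+\infty$ placing it where the process is exponentially dense; the delicate part is to quantify the $O(1)$-window of moduli straddling $u^\ast$ and to check that a positive proportion of them have straddling probability bounded away from $0$ and $1$, so that their total variance is indeed of order $e^{R+M_f/a_R}$.
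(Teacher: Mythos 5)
Your skeleton coincides with the paper's: both proofs run through Soshnikov's Theorem~\ref{Soshnikov}, and your expectation asymptotics
\[
\mathbb{E}S_{|f_R|}\sim\frac{\alpha}{4}\,|f(M_f^-)|\,e^{R+M_f/a_R},\qquad
\sum_{n}\mathbb{E}\big[f_R(\rho_n^{(\alpha)})^2\big]\sim\frac{\alpha}{4}\,f(M_f^-)^2\,e^{R+M_f/a_R},
\]
obtained by the Laplace concentration at the endpoint $M_f$, agree with what the paper proves (via the substitution $x=a_Ry+M_f$ and Lemmas~\ref{lem:BoundExp}, \ref{lem:Stirling}). The genuine gap is in the variance lower bound, which is the heart of the theorem and which you yourself flag as ``the delicate part'' without resolving it. Your argument needs three quantitative facts that are nowhere established: (i) that for indices $n$ with typical radius within an $O(1)$-window of $u^\ast=R+M_f/a_R$ the straddling probability $p_n=\mathbb{P}\big(|\rho_n^{(\alpha)}|_{\mathrm{h}}<u^\ast\big)$ stays bounded away from $0$ and $1$ for a positive proportion of the $\asymp e^{u^\ast}$ such indices; (ii) that the replacement $f_R\approx f(M_f^-)\,\mathbbm{1}_{\{|z|_{\mathrm{h}}<u^\ast\}}$ is legitimate at the level of each $\Var\big(f_R(\rho_n^{(\alpha)})\big)$, which requires exponential tail bounds for $|\rho_n^{(\alpha)}|_{\mathrm{h}}$ to kill the contribution of radii at distance $\gtrsim 1/a_R$ below $u^\ast$ where $f_R$ takes arbitrary values of $f$; and (iii) the counting statement that $\asymp e^{u^\ast}$ indices straddle the window at all. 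Without these, the divergence of the variance — hence Soshnikov's first and third hypotheses — is asserted rather than proved, and all three facts amount to exactly the kind of control of the laws of $\rho_n^{(\alpha)}$ that the paper's lemmas are designed to provide.

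The paper sidesteps the straddling picture entirely and computes the variance asymptotics exactly: it writes $\Var S_{f_R}=\sum_n\mathbb{E}\big[f_R(\rho_n^{(\alpha)})^2\big]-\sum_n\big(\mathbb{E}[f_R(\rho_n^{(\alpha)})]\big)^2$, evaluates the second sum by the Riemann-sum substitution $n=\lfloor te^{R+M_f/a_R}\rfloor$ followed by $x=a_Ry+M_f$, obtaining
\[
\sum_n\big(\mathbb{E}[f_R(\rho_n^{(\alpha)})]\big)^2\sim\frac{\alpha|f(M_f^-)|^2}{4B(\alpha,\alpha+1)}\int_{-\infty}^0\int_{-\infty}^0\frac{e^{(\alpha+1)(x+y)}}{(e^x+e^y)^{2\alpha+1}}\,\mathrm{d}x\,\mathrm{d}y\cdot e^{R+M_f/a_R},
\]
and then uses the identity $\frac{1}{B(\alpha,\alpha+1)}\int_{-\infty}^{+\infty}\int_{-\infty}^0\frac{e^{(\alpha+1)(x+y)}}{(e^x+e^y)^{2\alpha+1}}\,\mathrm{d}x\,\mathrm{d}y=1$ to conclude that the variance is asymptotic to the same constant with the double integral taken over $(0,+\infty)\times(-\infty,0]$ — a manifestly positive multiple of $e^{R+M_f/a_R}$. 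If you wish to keep your more probabilistic route, the concrete task left open is to prove that $\mathbb{P}\big(|\rho^{(\alpha)}_{\lfloor te^{u^\ast}\rfloor}|_{\mathrm{h}}<u^\ast\big)$ converges, uniformly over $t$ in compact subsets of $(0,+\infty)$, to an explicit limit lying strictly in $(0,1)$ (an incomplete-Gamma-type function of $t$), together with the tail bound in (ii); both would again come from Lemmas~\ref{lem:BoundExp} and \ref{lem:Stirling}, so in effect you would be redoing the paper's computation in a different order.
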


With the help of Theorem~\ref{Small White Noise Hyperbolic}, we can do more detailed discussion when $a_R\ll 1$ as $R\to+\infty$.

(i) In the case $R^{-1}\ll a_R\ll 1$, we always have $R+M_f/a_R\to+\infty$, so the limiting behaviour of $\sum_{z\in\mathcal{H}_{\alpha}}f\big(a_R(|z|_{\mathrm{h}}-R)\big)$ is Gaussian when $f(M_f^-)$ exists and is non-zero.

(ii) In the case $a_R=R^{-1}$, if $M_f\leq-1$, since $a_R(|z|_{\mathrm{h}}-R)>-1$ except $z=0$, $\sum_{z\in\mathcal{H}_{\alpha}}f\big(R^{-1}(|z|_{\mathrm{h}}-R)\big)$ is almost surely the zero random variable for every $R>0$; if $M_f>-1$, we have $R+M_f/a_R\to+\infty$, so the limiting behaviour of $\sum_{z\in\mathcal{H}_{\alpha}}f\big(R^{-1}(|z|_{\mathrm{h}}-R)\big)$ is Gaussian when $f(M_f^-)$ exists and is non-zero.

(iii) In the case $a_R\ll R^{-1}$, if $M_f<0$, since $a_R(|z|_{\mathrm{h}}-R)>-Ra_R\to0$  except $z=0$, $\sum_{z\in\mathcal{H}_{\alpha}}f\big(a_R(|z|_{\mathrm{h}}-R)\big)$ is almost surely the zero random variable for sufficiently large $R$; if $M_f\geq0$, we have $R+M_f/a_R\to+\infty$, so the limiting behaviour of $\sum_{z\in\mathcal{H}_{\alpha}}f\big(a_R(|z|_{\mathrm{h}}-R)\big)$ is Gaussian when $f(M_f^-)$ exists and is non-zero.

\begin{question}
	For the hyperbolic situation, in the case $a_R=R^{-1}$ and $M_f>-1$, or $a_R\ll R^{-1}$ and $M_f\geq0$, does the central limit theorem also holds without the condition that $f(M_f^-)$ exists and is non-zero?
\end{question}

\begin{proof}[Proof of Theorem~\ref{Small White Noise Hyperbolic}]
We will use Soshnikov's Theorem~\ref{Soshnikov} to prove this theorem. The calculations are similar as Section~\ref{Proof of White Noise Hyperbolic}.

Let us first calculate the expectation
\begin{equation*}
	\begin{split}
		&\quad\,\mathbb{E}\Big[\sum_{z\in\mathcal{H}_{\alpha}}\big|f\big(a_R(|z|_{\mathrm{h}}-R)\big)\big|\Big]\\
		&=4^{\alpha}\int_{-Ra_R}^{M_f}\int_{0}^{+\infty}|f(x)|e^{\frac{x}{a_R}}k_{\lfloor te^{R+\frac{x}{a_R}}\rfloor}^{(\alpha)}\Big(1-\frac{2}{e^{R+\frac{x}{a_R}}+1}\Big)^{2{\lfloor te^{R+\frac{x}{a_R}}\rfloor}+1}\frac{e^{\alpha(R+\frac{x}{a_R})}}{(e^{R+\frac{x}{a_R}}+1)^{2\alpha}}\mathrm{d}t\mathrm{d}x\cdot\frac{e^R}{a_R}.
	\end{split}
\end{equation*}
Make a variable substitution by $x=a_Ry+M_f$, then
\begin{equation*}
	\begin{split}
		&\quad\,\mathbb{E}\Big[\sum_{z\in\mathcal{H}_{\alpha}}\big|f\big(a_R(|z|_{\mathrm{h}}-R)\big)\big|\Big]\\
		&=4^{\alpha}\int_{-R-M_f/a_R}^{0}\int_{0}^{+\infty}|f(a_Ry+M_f)|e^yk_{\lfloor te^{R+M_f/a_R+y}\rfloor}^{(\alpha)}\Big(1-\frac{2}{e^{R+M_f/a_R+y}+1}\Big)^{2{\lfloor te^{R+M_f/a_R+y}\rfloor}+1}\\
		&\qquad\qquad\qquad\qquad\qquad\qquad\qquad\qquad\qquad\qquad\qquad\cdot\frac{e^{\alpha(R+M_f/a_R+y)}}{(e^{R+M_f/a_R+y}+1)^{2\alpha}}\mathrm{d}t\mathrm{d}y\cdot e^{R+M_f/a_R}.
	\end{split}
\end{equation*}
Hence
\begin{equation*}
	\begin{split}
		\mathbb{E}\Big[\sum_{z\in\mathcal{H}_{\alpha}}\big|f\big(a_R(|z|_{\mathrm{h}}-R)\big)\big|\Big]\sim\frac{\alpha|f(M_f^-)|}{4}e^{R+M_f/a_R}.
	\end{split}
\end{equation*}

Next we shall calculate the variance
\begin{equation*}
	\begin{split}
		\Var\Big(\sum_{z\in\mathcal{H}_{\alpha}}f\big(a_R(|z|_{\mathrm{h}}-R)\big)\Big)=\sum_{n=0}^{\infty}\mathbb{E}\Big[f^2\big(a_R(|\rho_n^{(\alpha)}|_{\mathrm{h}}-R)\big)\Big]-\sum_{n=0}^{\infty}\mathbb{E}\Big[f\big(a_R(|\rho_n^{(\alpha)}|_{\mathrm{h}}-R)\big)\Big]^2.
	\end{split}
\end{equation*}
For the first term, we have
\begin{equation*}
	\begin{split}
		\sum_{n=0}^{\infty}\mathbb{E}\Big[f^2\big(a_R(|\rho_n^{(\alpha)}|_{\mathrm{h}}-R)\big)\Big]\sim\frac{\alpha|f(M_f^-)|^2}{4}e^{R+M_f/a_R}.
	\end{split}
\end{equation*}
As for the second term
\begin{equation*}
	\begin{split}
		&\quad\,\sum_{n=0}^{\infty}\mathbb{E}\Big[f\big(a_R(|\rho_n^{(\alpha)}|_{\mathrm{h}}-R)\big)\Big]^2\\
		&=4^{2\alpha}\sum_{n=0}^{\infty}\left(\int_{-Ra_R}^{M_f}f(x)k_n^{(\alpha)}\Big(1-\frac{2}{e^{R+\frac{x}{a_R}}+1}\Big)^{2n+1}\frac{e^{\alpha(R+\frac{x}{a_R})}}{(e^{R+\frac{x}{a_R}}+1)^{2\alpha}}\mathrm{d}x\right)^2\cdot\frac{1}{a_R^2},
	\end{split}
\end{equation*}
set $n=\lfloor te^{R+M_f/a_R}\rfloor$, then
\begin{equation*}
	\begin{split}
		&\quad\,\sum_{n=0}^{\infty}\mathbb{E}\Big[f\big(a_R(|\rho_n^{(\alpha)}|_{\mathrm{h}}-R)\big)\Big]^2\\
		&=4^{2\alpha}\int_{0}^{+\infty}\left(\int_{-Ra_R}^{M_f}f(x)k_{\lfloor te^{R+M_f/a_R}\rfloor}^{(\alpha)}\Big(1-\frac{2}{e^{R+\frac{x}{a_R}}+1}\Big)^{2\lfloor te^{R+M_f/a_R}\rfloor+1}\right.\\
		&\left.\qquad\qquad\qquad\qquad\qquad\qquad\qquad\qquad\cdot\frac{e^{\alpha(R+\frac{x}{a_R})}}{(e^{R+\frac{x}{a_R}}+1)^{2\alpha}}\mathrm{d}x\right)^2\mathrm{d}t\cdot\frac{e^{R+M_f/a_R}}{a_R^2}.
	\end{split}
\end{equation*}
Make a variable substitution by $x=a_Ry+M_f$, so
\begin{equation*}
	\begin{split}
		&\quad\,\sum_{n=0}^{\infty}\mathbb{E}\Big[f\big(a_R(|\rho_n^{(\alpha)}|_{\mathrm{h}}-R)\big)\Big]^2\\
		&=4^{2\alpha}\int_{0}^{+\infty}\left(\int_{-R-M_f/a_R}^{0}f(a_Ry+M_f)k_{\lfloor te^{R+M_f/a_R}\rfloor}^{(\alpha)}\Big(1-\frac{2}{e^{R+M_f/a_R+y}+1}\Big)^{2\lfloor te^{R+M_f/a_R}\rfloor+1}\right.\\
		&\left.\qquad\qquad\qquad\qquad\qquad\qquad\qquad\qquad\qquad\qquad\cdot\frac{e^{\alpha(R+M_f/a_R+y)}}{(e^{R+M_f/a_R+y}+1)^{2\alpha}}\mathrm{d}y\right)^2\mathrm{d}t\cdot{e^{R+M_f/a_R}}\\
		&\sim\frac{\alpha|f(M_f^-)|^2}{4B(\alpha,\alpha+1)}\int_{-\infty}^0\int_{-\infty}^0\frac{e^{(\alpha+1)(x+y)}}{(e^x+e^y)^{2\alpha+1}}\mathrm{d}x\mathrm{d}y\cdot e^{R+M_f/a_R}.
	\end{split}
\end{equation*}
Notice taht
\begin{equation*}
	\begin{split}
		\frac{1}{B(\alpha,\alpha+1)}\int_{-\infty}^{+\infty}\int_{-\infty}^0\frac{e^{(\alpha+1)(x+y)}}{(e^x+e^y)^{2\alpha+1}}\mathrm{d}x\mathrm{d}y=1,
	\end{split}
\end{equation*}
we conclude that
\begin{equation*}
	\begin{split}
		\Var\Big(\sum_{z\in\mathcal{H}_{\alpha}}f\big(a_R(|z|_{\mathrm{h}}-R)\big)\Big)\sim\frac{\alpha|f(M_f^-)|^2}{4B(\alpha,\alpha+1)}\int_0^{+\infty}\int_{-\infty}^0\frac{e^{(\alpha+1)(x+y)}}{(e^x+e^y)^{2\alpha+1}}\mathrm{d}x\mathrm{d}y\cdot e^{R+M_f/a_R}.
	\end{split}
\end{equation*}

The above calculations yield that when $R\to+\infty$,
\begin{equation*}
	\begin{split}
		\mathbb{E}\Big[\sum_{z\in\mathcal{H}_{\alpha}}\big|f\big(a_R(|z|_{\mathrm{h}}-R)\big)\big|\Big]=O\left(\Var\Big(\sum_{z\in\mathcal{H}_{\alpha}}f\big(a_R(|z|_{\mathrm{h}}-R)\big)\Big)\right),
	\end{split}
\end{equation*}
and then we can use Soshnikov's Theorem~\ref{Soshnikov} to get the central limit theorem.

This completes the proof of Theorem~\ref{Small White Noise Hyperbolic}.
\end{proof}

\subsection{Fluctuations in the Ginibre case}
Theorem~\ref{Ginibre Poisson} explains that the limiting behaviour of the point process $\{R(|z|-R):z\in\mathcal{G}\}$ is Poisson when $R\to+\infty$. For a bounded measurable and compactly supported function $f: \mathbb{R}\to\mathbb{R}$, Theorem~\ref{Extreme Ginibre Normality} illustrates that the limiting behaviour of $\sum_{z\in\mathcal{G}}f(|z|-R)$ is Gaussian when $R\to+\infty$. In the case $1\ll a_R\ll R$ as $R\to+\infty$, Theorem~\ref{White Noise Ginibre} shows that the limiting behaviour of $\sum_{z\in\mathcal{G}}f\big(a_R(|z|-R)\big)$ is also Gaussian. For completeness, we continue to consider the limiting behaviour of $\sum_{z\in\mathcal{G}}f\big(a_R(|z|-R)\big)$ in the case $a_R\gg R$ and $a_R\ll 1$ as $R\to+\infty$.

In the case $a_R\gg R$ as $R\to+\infty$, for a bounded measurable and compactly supported function $f: \mathbb{R}\to\mathbb{R}$, the limiting behaviour of $\sum_{z\in\mathcal{G}}f\big(a_R(|z|-R)\big)$ is zero. This can be seen by
\begin{equation*}
	\begin{split}
		\Var\Big(\sum_{z\in\mathcal{G}}f\big(a_R(|z|-R)\big)\Big)\xrightarrow[R\to+\infty]{}0.
	\end{split}
\end{equation*}
In fact, a similar argument as Section~\ref{Proof of White Noise Ginibre} implies that
\begin{equation*}
	\begin{split}
		\Var\Big(\sum_{z\in\mathcal{G}}f\big(a_R(|z|-R)\big)\Big)\leq\sum_{z\in\mathcal{G}}\mathbb{E}\Big[f^2\big(a_R(|z|-R)\big)\Big]\sim\frac{2R}{a_R}\int_{\mathbb{R}}f^2(x)\mathrm{d}x\xrightarrow[R\to+\infty]{}0.
	\end{split}
\end{equation*}

We now turn to consider the case $a_R\ll 1$ as $R\to+\infty$.

\begin{theorem}\label{Small White Noise Ginibre}
	Suppose that $a_R>0$ satisfies $a_R\ll 1$ as $R\to+\infty$. Let $f$ be a real-valued bounded measurable function on $\mathbb{R}$ with compact support such that $f(M_f^-):=\lim\limits_{x\to M_f^-}f(x)$ exists and is non-zero. If $R+M_f/a_R\to+\infty$ as $R\to+\infty$, then,
	\begin{equation*}
		\begin{split}
			\frac{\sum_{z\in\mathcal{G}}f\big(a_R(|z|-R)\big)-\mathbb{E}\big[\sum_{z\in\mathcal{G}}f\big(a_R(|z|-R)\big)\big]}{\sqrt{\Var\big(\sum_{z\in\mathcal{G}}f\big(a_R(|z|-R)\big)\big)}}\xrightarrow[R\to+\infty]{\mathrm{law}}\mathcal{N}(0,1).
		\end{split}
	\end{equation*}
\end{theorem}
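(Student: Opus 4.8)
The plan is to follow verbatim the blueprint of Theorem~\ref{Small White Noise Hyperbolic} and of Section~\ref{Proof of White Noise Ginibre}, reducing everything to Soshnikov's central limit theorem (Theorem~\ref{Soshnikov}) applied to the determinantal process $\mathcal{G}$ with the test function $z \mapsto f(a_R(|z|-R))$. Since the statement is the self-normalized convergence $\frac{S-\mathbb{E}S}{\sqrt{\Var S}} \to \mathcal{N}(0,1)$, I would not need the precise limiting constants; it suffices to verify the three hypotheses of Theorem~\ref{Soshnikov}: that $\Var\big(\sum_{z\in\mathcal{G}}f(a_R(|z|-R))\big) \to +\infty$, that $\sup|f(a_R(|z|-R))|=\|f\|_\infty$ is $o\big((\Var S)^\varepsilon\big)$ for every $\varepsilon>0$, and that $\mathbb{E}\big[\sum_{z\in\mathcal{G}}|f(a_R(|z|-R))|\big]=O\big((\Var S)^\delta\big)$ for some $\delta>0$. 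The second condition is immediate because $\|f\|_\infty$ is a fixed constant, so the whole problem reduces to controlling the orders of magnitude of the expectation of $|f(a_R(|z|-R))|$ and of the variance.

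For the expectation, I would start from $K_{\mathcal{G}}(z,z)=1/\pi$ and \eqref{Expected value}, pass to polar coordinates to obtain $\mathbb{E}\big[\sum_{z\in\mathcal{G}}|f(a_R(|z|-R))|\big]=2\int_{0}^{+\infty}|f(a_R(r-R))|\,r\,\mathrm{d}r$, and then substitute $x=a_R(r-R)$ followed by the centering $x=a_Ry+M_f$, exactly as in the expectation computation preceding Theorem~\ref{Small White Noise Hyperbolic}. This yields $\mathbb{E}\big[\sum_{z\in\mathcal{G}}|f(a_R(|z|-R))|\big]=\frac{2}{a_R^2}\int_{-Ra_R}^{M_f}|f(x)|(Ra_R+x)\,\mathrm{d}x$, whose leading asymptotics are governed by fixed powers of $R$, $1/a_R$ and $R+M_f/a_R$; I would record its precise size (which depends on whether the averaging window $[-Ra_R,M_f]$ shrinks or captures all of $\supp f$), as this is exactly what is needed for the third Soshnikov condition.

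The heart of the proof is the variance, and here I would use the determinantal formula \eqref{Variance} with $|K_{\mathcal{G}}(z,w)|^2=\pi^{-2}e^{-|z-w|^2}$, carrying out the same polar-coordinate reduction and angular-integral estimate as in Section~\ref{Proof of White Noise Ginibre} until reaching an integral whose essential new feature is the Gaussian factor $e^{-(x-y)^2/a_R^2}$. Because $a_R\ll 1$, this factor concentrates on the diagonal $x=y$ at width $a_R$, whereas the symmetric factor $[f(x)-f(y)]^2$ vanishes there; consequently the only surviving contribution comes from the neighbourhood of the jump of $f$ at its essential right endpoint $M_f$, where $[f(x)-f(y)]^2\approx f(M_f^-)^2$ because $f$ is essentially $f(M_f^-)\neq 0$ just to the left of $M_f$ and vanishes just to the right. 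Localizing by $x=M_f-a_Rs$, $y=M_f+a_Ru$ with $s,u>0$, the Jacobian $a_R^2$ cancels the prefactor $a_R^{-2}$, the slowly varying weight $\sqrt{(R+x/a_R)(R+y/a_R)}$ tends to $R+M_f/a_R$, and a dominated-convergence argument with the same Gaussian domination used in Section~\ref{Proof of White Noise Ginibre} identifies the limit. This gives $\Var\big(\sum_{z\in\mathcal{G}}f(a_R(|z|-R))\big)\sim c\,f(M_f^-)^2\,(R+M_f/a_R)$ for an explicit positive constant $c$, so the hypothesis $R+M_f/a_R\to+\infty$ forces the variance to diverge.

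Finally I would assemble the three conditions: the variance asymptotics give divergence; the supremum condition is trivial; and the ratio condition holds because both $\mathbb{E}\big[\sum|f|\big]$ and $\Var$ are comparable to fixed powers of $R$, $1/a_R$ and $R+M_f/a_R$, while $\Var\gtrsim R+M_f/a_R$, so a fixed $\delta$ (one checks that $\delta=2$ suffices) works once one verifies the three regimes of $Ra_R$, equivalently the sign of $M_f$. Soshnikov's Theorem~\ref{Soshnikov} then yields convergence to $\mathcal{N}(0,1)$. I expect the main obstacle to be the rigorous justification of the diagonal-concentration step in the variance for a merely measurable $f$ possessing only the one-sided limit $f(M_f^-)$: one must show that away from the jump at $M_f$ the contribution is negligible, which is precisely where $a_R\ll1$ and the Gaussian width enter, and that the jump region produces a genuinely positive limit uniformly across the possible rates of $Ra_R$. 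The hypotheses that $f(M_f^-)$ exists and is non-zero and that $R+M_f/a_R\to+\infty$ are exactly what guarantee a non-degenerate, diverging variance.
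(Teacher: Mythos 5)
Your overall architecture --- reduce to Soshnikov's Theorem~\ref{Soshnikov}, bound $\mathbb{E}\big[\sum_{z\in\mathcal{G}}|f(a_R(|z|-R))|\big]$ by a power of $R+M_f/a_R$, show that the variance is at least of order $R+M_f/a_R\to+\infty$, and take $\delta=2$ --- is exactly the paper's, and your expectation computation agrees with it. The genuine gap is in the variance step: you claim the full asymptotic equivalence $\Var\big(\sum_{z\in\mathcal{G}}f(a_R(|z|-R))\big)\sim c\,f(M_f^-)^2(R+M_f/a_R)$, and you identify as the ``main obstacle'' the need to show that contributions away from the jump at $M_f$ are negligible. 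That step is not an obstacle to be overcome; it is false for general bounded measurable $f$. Near the diagonal, $[f(x)-f(y)]^2$ does not vanish away from $M_f$: \emph{every} discontinuity of $f$ contributes at the same order. Concretely, take $f=\chi_{[0,1]}$ and $a_R=R^{-1/2}$, so that $M_f=1$, $f(M_f^-)=1$, and all hypotheses hold. The pairs $(x,y)$ with $x<0<y$, $|x-y|\lesssim a_R$, contribute $\sim cR$ to the variance, while the pairs straddling $M_f=1$ contribute $\sim c(R+\sqrt{R})\sim cR$, with the same constant $c$; since the integrand of \eqref{Variance} is pointwise non-negative, these contributions add, and $\Var\sim 2cR$ rather than your claimed $c(R+M_f/a_R)\sim cR$. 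Hence no dominated-convergence argument can deliver the asymptotics you assert, and a proof organized around them would fail at this point.

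The repair is what the paper does, and it costs nothing because the statement is self-normalized: the exact variance asymptotics are never needed, only a divergent lower bound. Since the integrand of \eqref{Variance} is non-negative, restrict the integration to the region $\{x<M_f<y\}$ (in the reduced coordinates), where $f(y)=0$ almost everywhere and hence $[f(x)-f(y)]^2=f^2(x)$. After substituting $x=a_Ru+M_f$, $y=a_Rv+M_f$, the hypothesis that $\lim_{x\to M_f^-}f(x)=f(M_f^-)\neq 0$ together with dominated convergence gives
$\Var\big(\sum_{z\in\mathcal{G}}f(a_R(|z|-R))\big)\geq(1+o(1))\,\tfrac{2}{\sqrt{\pi}}\,|f(M_f^-)|^2\int_{0}^{+\infty}\!\int_{-\infty}^{0}e^{-(u-v)^2}\,\mathrm{d}u\,\mathrm{d}v\cdot(R+M_f/a_R)$,
which diverges by hypothesis. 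Note that this restricted integral is precisely your localization $x=M_f-a_Rs$, $y=M_f+a_Ru$ with $s,u>0$; in other words, your local computation is correct, but it must be presented as a lower bound obtained by restricting a non-negative integrand, not as the identification of the limit of the whole variance. Combined with the expectation bound $\mathbb{E}\big[\sum_{z\in\mathcal{G}}|f(a_R(|z|-R))|\big]\leq 2\|f\|_{\infty}(R+M_f/a_R)^2=O(\Var^2)$, Soshnikov's three conditions hold with $\delta=2$ and the conclusion follows.
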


With the help of Theorem~\ref{Small White Noise Ginibre}, we can do more detailed discussion when $a_R\ll 1$ as $R\to+\infty$.

(i) In the case $R^{-1}\ll a_R\ll 1$, we always have $R+M_f/a_R\to+\infty$, so the limiting behaviour of $\sum_{z\in\mathcal{G}}f\big(a_R(|z|-R)\big)$ is Gaussian when $f(M_f^-)$ exists and is non-zero.

(ii) In the case $a_R=R^{-1}$, if $M_f\leq-1$, since $a_R(|z|-R)>-1$ except $z=0$, $\sum_{z\in\mathcal{G}}f\big(R^{-1}(|z|-R)\big)$ is almost surely the zero random variable for every $R>0$; if $M_f>-1$, we have $R+M_f/a_R\to+\infty$, so the limiting behaviour of $\sum_{z\in\mathcal{G}}f\big(R^{-1}(|z|-R)\big)$ is Gaussian when $f(M_f^-)$ exists and is non-zero.

(iii) In the case $a_R\ll R^{-1}$, if $M_f<0$, since $a_R(|z|-R)>-Ra_R\to0$  except $z=0$, $\sum_{z\in\mathcal{G}}f\big(a_R(|z|-R)\big)$ is almost surely the zero random variable for sufficiently large $R$; if $M_f\geq0$, we have $R+M_f/a_R\to+\infty$, so the limiting behaviour of $\sum_{z\in\mathcal{G}}f\big(a_R(|z|-R)\big)$ is Gaussian when $f(M_f^-)$ exists and is non-zero.

\begin{question}
	For the Ginibre situation, in the case $a_R=R^{-1}$ and $M_f>-1$, or $a_R\ll R^{-1}$ and $M_f\geq0$, does the central limit theorem also holds without the condition that $f(M_f^-)$ exists and is non-zero?
\end{question}

\begin{proof}[Proof of Theorem~\ref{Small White Noise Ginibre}]
We will use Soshnikov's Theorem~\ref{Soshnikov} to prove this theorem. The calculations are similar as Section~\ref{Proof of White Noise Ginibre}.
	
Let us first calculate the expectation
\begin{equation*}
	\begin{split}
		\mathbb{E}\Big[\sum_{z\in\mathcal{G}}\big|f\big(a_R(|z|-R)\big|\big)\Big]&=2\int_{-Ra_R}^{M_f}|f(x)|\mathrm{d}x\cdot\frac{R}{a_R}+2\int_{-Ra_R}^{M_f}x|f(x)|\mathrm{d}x\cdot\frac{1}{a_R^2}\\
		&=2\int_{-Ra_R}^{M_f}|f(x)|\big(R+\frac{x}{a_R}\big)\mathrm{d}x\cdot\frac{1}{a_R}.
	\end{split}
\end{equation*}
Make a variable substitution by $x=a_Ry+M_f$, then
\begin{equation*}
	\begin{split}
		\mathbb{E}\Big[\sum_{z\in\mathcal{G}}\big|f\big(a_R(|z|-R)\big|\big)\Big]&=2\int_{-R-M_f/a_R}^{0}|f(a_Ry+M_f)|\big(R+M_f/a_R+y\big)\mathrm{d}y\\
		&\leq2\left\|f\right\|_{\infty}(R+M_f/a_R)^2.
	\end{split}
\end{equation*}
	
Next we shall calculate the variance
\begin{equation*}
	\begin{split}
		&\quad\,\Var\Big(\sum_{z\in\mathcal{G}}f\big(a_R(|z|-R)\big)\Big)\\
		&=\frac{2}{\pi}\int_{-Ra_R}^{+\infty}\int_{-Ra_R}^{+\infty}\int_{0}^{\pi\sqrt{(R+\frac{x}{a_R})(R+\frac{y}{a_R})}}\big[f(x)-f(y)\big]^2e^{-\frac{(x-y)^2}{a_R^2}}\sqrt{\Big(R+\frac{x}{a_R}\Big)\Big(R+\frac{y}{a_R}\Big)}\\
		&\qquad\qquad\qquad\qquad\qquad\qquad\qquad\quad\cdot e^{-4(R+\frac{x}{a_R})(R+\frac{y}{a_R})\sin^2{\frac{t}{2\sqrt{(R+\frac{x}{a_R})(R+\frac{y}{a_R})}}}}\mathrm{d}t\mathrm{d}x\mathrm{d}y\cdot\frac{1}{a_R^2}\\
		&=I_1(R)+I_2(R),
	\end{split}
\end{equation*}
where
\begin{equation*}
	\begin{split}
		I_1(R)&=\frac{4}{\pi}\int_{-Ra_R}^{+\infty}\int_{-Ra_R}^{M_f}\int_{0}^{\pi\sqrt{(R+\frac{x}{a_R})(R+\frac{y}{a_R})}}f^2(x)e^{-\frac{(x-y)^2}{a_R^2}}\sqrt{\Big(R+\frac{x}{a_R}\Big)\Big(R+\frac{y}{a_R}\Big)}\\
		&\qquad\qquad\qquad\qquad\qquad\qquad\cdot e^{-4(R+\frac{x}{a_R})(R+\frac{y}{a_R})\sin^2{\frac{t}{2\sqrt{(R+\frac{x}{a_R})(R+\frac{y}{a_R})}}}}\mathrm{d}t\mathrm{d}x\mathrm{d}y\cdot\frac{1}{a_R^2},
	\end{split}
\end{equation*}
and
\begin{equation*}
	\begin{split}
		I_2(R)&=\frac{4}{\pi}\int_{-Ra_R}^{M_f}\int_{-Ra_R}^{M_f}\int_{0}^{\pi\sqrt{(R+\frac{x}{a_R})(R+\frac{y}{a_R})}}f(x)f(y)e^{-\frac{(x-y)^2}{a_R^2}}\sqrt{\Big(R+\frac{x}{a_R}\Big)\Big(R+\frac{y}{a_R}\Big)}\\
		&\qquad\qquad\qquad\qquad\qquad\qquad\cdot e^{-4(R+\frac{x}{a_R})(R+\frac{y}{a_R})\sin^2{\frac{t}{2\sqrt{(R+\frac{x}{a_R})(R+\frac{y}{a_R})}}}}\mathrm{d}t\mathrm{d}x\mathrm{d}y\cdot\frac{1}{a_R^2}\\
		&\leq\frac{4}{\pi}\int_{-Ra_R}^{M_f}\int_{-Ra_R}^{M_f}\int_{0}^{\pi\sqrt{(R+\frac{x}{a_R})(R+\frac{y}{a_R})}}f^2(x)e^{-\frac{(x-y)^2}{a_R^2}}\sqrt{\Big(R+\frac{x}{a_R}\Big)\Big(R+\frac{y}{a_R}\Big)}\\
		&\qquad\qquad\qquad\qquad\qquad\qquad\cdot e^{-4(R+\frac{x}{a_R})(R+\frac{y}{a_R})\sin^2{\frac{t}{2\sqrt{(R+\frac{x}{a_R})(R+\frac{y}{a_R})}}}}\mathrm{d}t\mathrm{d}x\mathrm{d}y\cdot\frac{1}{a_R^2}.
	\end{split}
\end{equation*}
It follows that
\begin{equation*}
	\begin{split}
		&\quad\,\Var\Big(\sum_{z\in\mathcal{G}}f\big(a_R(|z|-R)\big)\Big)\\
		&\geq\frac{4}{\pi}\int_{M_f}^{+\infty}\int_{-Ra_R}^{M_f}\int_{0}^{\pi\sqrt{(R+\frac{x}{a_R})(R+\frac{y}{a_R})}}f^2(x)e^{-\frac{(x-y)^2}{a_R^2}}\sqrt{\Big(R+\frac{x}{a_R}\Big)\Big(R+\frac{y}{a_R}\Big)}\\
		&\qquad\qquad\qquad\qquad\qquad\qquad\cdot e^{-4(R+\frac{x}{a_R})(R+\frac{y}{a_R})\sin^2{\frac{t}{2\sqrt{(R+\frac{x}{a_R})(R+\frac{y}{a_R})}}}}\mathrm{d}t\mathrm{d}x\mathrm{d}y\cdot\frac{1}{a_R^2}.
	\end{split}
\end{equation*}
Set $x=a_Ru+M_f$ and $y=a_Rv+M_f$, then
\begin{equation*}
	\begin{split}
		&\quad\,\Var\Big(\sum_{z\in\mathcal{G}}f\big(a_R(|z|-R)\big)\Big)\\
		&\geq\frac{4}{\pi}\int_{0}^{+\infty}\int_{-R-M_f/a_R}^{0}\int_{0}^{\pi\sqrt{(R+M_f/a_R+u)(R+M_f/a_R+v)}}f^2(a_Ru+M_f)e^{-(u-v)^2}\\
		&\qquad\qquad\qquad\qquad\qquad\qquad\cdot\sqrt{(R+M_f/a_R+u)(R+M_f/a_R+v)}\\
		&\qquad\qquad\qquad\qquad\qquad\qquad\cdot e^{-4(R+M_f/a_R+u)(R+M_f/a_R+v)\sin^2{\frac{t}{2\sqrt{(R+M_f/a_R+u)(R+M_f/a_R+v)}}}}\mathrm{d}t\mathrm{d}u\mathrm{d}v\\
		&\sim\frac{2}{\sqrt{\pi}}|f(M_f^-)|^2\int_{0}^{+\infty}\int_{-\infty}^{0}e^{-(u-v)^2}\mathrm{d}u\mathrm{d}v\cdot(R+M_f/a_R).
	\end{split}
\end{equation*}
	
The above calculations yield that when $R\to+\infty$,
	\begin{equation*}
		\begin{split}
			\mathbb{E}\Big[\sum_{z\in\mathcal{G}}\big|f\big(a_R(|z|-R)\big)\big|\Big]=O\left(\left(\Var\Big(\sum_{z\in\mathcal{G}}f\big(a_R(|z|-R)\big)\Big)\right)^2\right),
		\end{split}
	\end{equation*}
and then we can use Soshnikov's Theorem~\ref{Soshnikov} to get the central limit theorem.

This completes the proof of Theorem~\ref{Small White Noise Ginibre}.
\end{proof}

%
%
%
%


\begin{thebibliography}{99}
	\bibitem{Bo} A. Borodin, {\it Determinantal point processes}. Oxford Handbook of Random Matrix Theory, Oxford Univ. Press, Oxford, (2011).
	
	\bibitem{Bu} A. I. Bufetov, {\it The conditional measures for the determinantal point process with the Bergman kernel}. arXiv:2112.15557 (Dec 2021).
	
	\bibitem{BD} A. I. Bufetov, A. V. Dymov, {\it A functional limit theorem for the sine-process}. Int. Math. Res. Not. IMRN 2019, no. 1, 249-319.
	
	\bibitem{BQ} A. I. Bufetov, Y. Qiu, {\it Determinantal point processes associated with Hilbert spaces of holomorphic functions}. Comm. Math. Phys. 351 (2017), 1-44.
	
	\bibitem{DV}
	D. J. Daley, D. Vere-Jones, {\it An introduction to the theory of point processes. Vol. II}. General theory and structure. Second edition. Probability and its Applications (New York). Springer, New York, 2008.
	
	\bibitem{Gh} S. Ghosh, {\it Determinantal processes and completeness of random exponentials: the critical case}. Probab. Theory Related Fields 163 (2015), 643-665.
	
	\bibitem{GP} S. Ghosh, Y. Peres, {\it Rigidity and tolerance in point processes: Gaussian zeros and Ginibre eigenvalues}. Duke Math. J. 166 (2017), 1789-1858.
	
	\bibitem{Gi} J. Ginibre, {\it Statistical ensembles of complex, quaternion, and real matrices}. J. Mathematical Phys. 6 (1965), 440-449.
	
	\bibitem{HKPV} J. B. Hough, M. Krishnapur, Y. Peres, B. Vir\'ag, {\it Zeros of Gaussian analytic functions and determinantal point processes}. University Lecture Series, 51. American Mathematical Society, Providence, RI, 2009.
	
	\bibitem{KK} Y. G. Kondratiev, T. Kuna, {\it Harmonic analysis on configuration space. I. General theory}. Infin. Dimens. Anal. Quantum Probab. Relat. Top. 5 (2002), 201-233.
	
	\bibitem{Ko} E. Kostlan, {\it On the spectra of Gaussian matrices}. Linear Algebra Appl. 162/164 (1992), 385-388. Directions in matrix theory (Auburn, AL, 1990).
	
	\bibitem{Kr} M. Krishnapur, {\it From random matrices to random analytic functions}. Ann. Probab. 37 (2009), 314-346
	
	
	\bibitem{Le} A. Lenard, {\it Correlation functions and the uniqueness of the state in classical statistical mechanics}. Comm. Math. Phys. 30 (1973), 35-44.
	
	\bibitem{Ma} O. Macchi, {\it The coincidence approach to stochastic point processes}. Advances in Appl. Probability. 7 (1975), 83-122.
	
	\bibitem{PV} Y. Peres, B. Vir\'ag, {\it Zeros of the i.i.d. Gaussian power series: A conformally invariant determinantal process}. Acta Math. 194 (2005), 1-35.
	
	\bibitem{ST} T. Shirai, Y. Takahashi, {\it Fermion process and Fredholm determinant}. Proceedings of the Second ISAAC Congress, Vol. 1 (Fukuoka, 1999), 15-23. Int. Soc. Anal. Appl. Comput., 7, Kluwer Acad. Publ., Dordrecht, 2000.
	
	\bibitem{So} A. Soshnikov, {\it Determinantal random point fields}. Russian Math. Surveys. 55 (2000), 923-975.
	
	\bibitem{Sos} A. Soshnikov, {\it Gaussian limit for determinantal random point fields}. Ann. Probab. 30 (2002), 171-187.
\end{thebibliography}
\end{document}